\title[On a Perturbed Critical p-Kirchhoff-Type Problem]{On a Perturbed Critical p-Kirchhoff-Type Problem}
\author{G. N. Cunha}
\address[G. N. Cunha]{Instituto de Matem\'{a}tica e Estat\'{i}stica, Universidade Federal de Goi\'{a}s, Goi\^{a}nia GO74001-970, Brazil}
\email{gabriel.neves@discente.ufg.br}
\author{F. Faraci}
\address[F. Faraci]{Department of Mathematics and Computer Sciences, University of Catania, 95125 Catania, Italy}
\email{ffaraci@dmi.unict.it}
\author{K. Silva}
\address[K. Silva]{Instituto de Matem\'{a}tica e Estat\'{i}stica, Universidade Federal de Goi\'{a}s, Goi\^{a}nia GO74001-970, Brazil}
\email{kayesilva@ufg.br}
\thanks{}
\newcommand{\R}{{\mathbb R}}
\newcommand{\ds}{\displaystyle}
\newtheorem{theor}{Theorem}[section]
\newtheorem{pro}{Proposition}[section]
\newtheorem{lem}{Lemma}[section]
\newtheorem{cor}{Corollary}[section]
\newtheorem{rem}{Remark}[section]
\begin{document}
	\begin{abstract}
	In this paper we deal with a stationary non-degenerate $p-$Kirchhoff type problem with critical non-linearity and a subcritical parametrized perturbation. We work  on bounded domains of the Euclidean space, without any restriction on the dimension or on $p>0$. Variational methods will be utilized in combination with an analysis of the fibering functions of the energy functional, in order to obtain ground state solutions, as well as Mountain Pass solutions, depending on the values of the parameter. A local analysis of the energy functional will allow us to obtain non-trivial solutions even beyond the extremal parameter.
		
	\end{abstract}
	
	\maketitle
	\begin{center}
		\begin{minipage}{12cm}
			\tableofcontents
		\end{minipage}
	\end{center}
	\smallskip
	
	\emph{Mathematics Subject Classification (2010)}: 35J20, 35B33.
	
	\smallskip
	
	\emph{Key words and phrases}: Critical Nonlinearity, Extremal Parameter, Fibering Maps, Kirchhoff Term, Subcritical Perturbation, Variational Methods.
	\section{Introduction}\label{Intro}
	
	In this paper we deal with the following stationary Kirchhoff type problem:	
	\begin{equation}\label{P}
		\left\{\begin{array}{lr}
			-M\left(\ds{\int_{\Omega}|\nabla u|^p}dx\right)\Delta_pu=|u|^{p^{\star}-2}u+\lambda f(x,u)&in  \ \ \ \Omega \\
			u=0&on \ \partial\Omega,
		\end{array}\right.
	\end{equation}
	where $1<p < +\infty $, $\Omega$ is a bounded domain in $\R^N$ with smooth boundary,  $p^{\star}=\frac{pN}{N-p}$, $M:[0,+\infty[\mapsto [0,+\infty[$ is a continuous function with $\hat{M}(t):=\ds{\int_{0}^{t}M(s)ds}$, $f:\Omega \times \mathbb{R}\mapsto \mathbb{R}$ is a Carath\'eodory function with primitive $F(x,t)=\ds{\int_{0}^{t}f(x,s)ds}$ for each $x \in \Omega$, $t\in\R$.

According to the survey \cite{pucci2019progress}, Kirchhoff problems arise from the study of the transverse oscillations of a stretched string. The original  equation is
\begin{equation}\label{kirchhoff}
	\rho h u_{tt}-\left\{\rho_0+\frac{E h}{2L}\int_{0}^{L}|u_x|^2dx\right\}u_{xx}+\delta u_x+f(x,u)=0,
\end{equation}
where   $u = u(t, x)$ is the lateral displacement at the time $t$ and at the space
coordinate $x$, $E$ the Young modulus, $\rho$ the mass density, $h$ the cross section area, $L$ the length of the string,
$\rho_0$ the initial axial tension, $\delta$ the resistance modulus, and $f$ is the external force. When $\delta=f=0$, \eqref{kirchhoff} was
introduced by Kirchhoff in \cite{kirchhoff1897vorlesungen}. Further details and a study of the physical phenomena described by Kirchhoff’s classical
theory can be found in \cite{villaggio1997mathematical}.
	In the last years, the existence and multiplicity of solutions for Kirchhoff problems with a critical non-linearity have received considerable attention. As a matter of fact, the main difficulty in dealing with such problems is the  lack of
	compactness of the Sobolev embedding $W_0^{1,p}(\Omega)\subset L^{p^\star}(\Omega)$, which prevents the application of
	standard variational methods.
	
	The existence and multiplicity of solutions of Kirchhoff type equations with critical exponents have been investigated by using different techniques, such as truncation and variational
	methods, the Nehari manifold approach, the Ljusternik-Schnirelmann category theory, genus
	theory (see for instance \cite{julio2006elliptic}--\cite{figueiredo2012multiplicity} and the references therein).

	In the present paper, we apply an idea introduced in \cite{FS}, which was inspired by the fibering method  in \cite{pokhozhaev1990fibration} and the  notion of extremal parameters described in \cite{Y}, to analize the topological changes occured on the energy functional as the parameter $\lambda$ varies. With such a technique, we do not need  to consider the second order derivative of the fiber function, except for the non-existence results. More specifically  we employ the Second Lions's Concentration Compactness principle to obtain Mountain Pass type solutions (see \cite{alves2010class},
\cite{figueiredo2013existence},\cite{hebey2015compactness},\cite{hebey2016multiplicity},\cite{naimen2014positive},\cite{naimen2019two},\cite{yao2016multiplicity}), 
but also to establish the sequential weak lower semi-continuity of the energy functional, with a proof inspired by \cite{EM}, which is, in turn (as far as we know), the only work so far where this approach has been utilized. Also, in the cases beyond the extremal parameter, we prove it is still possible to obtain non-trivial solutions, as long as we minimize the energy functional locally, as in \cite{FS}.
	
	We are going to look for solutions of problem \eqref{P} in the Sobolev space $W^{1,p}_0(\Omega)$. This linear space is endowed with the norm
	\begin{equation*}
		 \| u \|:= \left(\int_{\Omega}|\nabla u|^p dx\right)^{\frac{1}{p}}
	\end{equation*}
and continuously embedded  into $L^{p^\star}(\Omega)$	with embedding constant
\begin{equation}\label{azzz}
	\ds{S=\sup_{u \in W_0^{1,p}(\Omega)\setminus \{0\}}\frac{\|u\|_{p^\star}^{p^\star}}{\|u\|^{p^\star}}}.
\end{equation}

	A weak solution for problem \eqref{P} is a critical point of the energy functional $\Phi_{\lambda}:W_0^{1,p}(\Omega)\mapsto \R$ given by

     \begin{equation*}
		\ds{\Phi_\lambda(u)= \frac{1}{p}\hat{M} (\| u \|^p)-\frac{1}{p^\star}\| u \|^{p^\star}_{p^\star}-\lambda\int_{\Omega}F(x,u(x))dx.}
	\end{equation*}

In order to control the behavior of the fibers at $0$ and $+\infty$, as well as to establish the coercivity of the energy functional, we need the following hypotheses on the non-local term $M$:
	\begin{enumerate}[label=\textcolor{blue}{$(\rho_\arabic*)$:}, ref=$\rho_\arabic*$]
	
		\item\label{rho1} $\ds{\lim_{t \to 0^{+}}M(t) > 0}$;
	
		\item \label{rho2}$\ds{\lim_{t \to +\infty}\frac{M(t)}{t^\frac{r-p}{p}}>0}$ for some  $r>p^\star$.
	\end{enumerate}
	The sequential weak lower semi-continuity of the energy functional, on the other hand, is associated with the following conditions:
	\begin{enumerate}[label=\textcolor{blue}{$(\beta_\arabic*)$:}, ref=$\beta_\arabic*$]
		\item\label{beta1} $\ds{\inf_{t > 0}\frac{\hat{M}(t)}{t^\frac{p^\star}{p}} \geq
		S\frac{p}{p^\star}}$;
		
		\item\label{beta2} $\ds{\hat{M}(t+s)\geq \hat{M}(t)+\hat{M}(s)}$  for all $t >0$ and $s>0$.
	\end{enumerate}
Notice that conditions $\eqref{beta1}$ and $\eqref{beta2}$ imply that $\hat M$ is strictly increasing.

 The existence of a Mountain Pass solution comes mainly from the next condition. This hypothesis (which is stronger than \eqref{beta1}) is also related to the non-existence results:

    \begin{enumerate}[label=\textcolor{blue}{$(\gamma_\arabic*)$:}, ref=$\gamma_\arabic*$]
	\item\label{beta3} $\ds{\inf_{t > 0}\frac{M(t)}{t^{\frac{p^\star}{p}-1}} > S}$
    \end{enumerate}	
	
An example of a function satisfying conditions  \eqref{rho1}, \eqref{rho2}, \eqref{beta1}, \eqref{beta2}, and \eqref{beta3}, is
	\begin{equation}\label{example}
		M(t):=a+bt^{\alpha-1},
	\end{equation}

for suitable values of $\alpha>1$ and $a,b>0$:
\begin{itemize}
	\item[(i)]
	Assume $\alpha> \frac{N}{N-p}$; then
	\begin{equation*}
		\ds{\inf_{t>0}\frac{\hat{M}(t)}{t^{\frac{p^\star}{p}}}} \geq \frac{p}{p^\star}S
	\end{equation*}
	if, and only if,

	\begin{equation*}
		a^{\frac{N(\alpha-1)-\alpha p}{p}}b\geq \left[\frac{p}{p^\star}S\frac{N(\alpha-1)-p\alpha}{N(\alpha-1)-p\alpha+p}\right]^{\frac{(N-p)(\alpha-1)}{p}}\alpha\frac{p}{N(\alpha-1)-p\alpha}.
	\end{equation*}
	
	\item[(ii)]
	
	Also, for $\alpha> \frac{N}{N-p}$, the following assertion holds true:
	\begin{equation*}
		\ds{\inf_{t>0}\frac{M(t)}{t^{\frac{p^\star}{p}-1}}} >S
	\end{equation*}
	if, and only if,

	\begin{equation*}
		a^{\frac{N(\alpha-1)-\alpha p}{p}}b> \left[S\frac{N(\alpha-1)-p\alpha}{N(\alpha-1)-p\alpha+p}\right]^{\frac{(N-p)(\alpha-1)}{p}}\frac{p}{N(\alpha-1)-p\alpha}.
	\end{equation*}
	
	\item[(iii)] For $\alpha>1$, $\ds{\lim_{t \to 0}M(t)>0}$.
	
	\item[(iv)] For any $r>p^\star$ such that $\alpha p>r$, there holds
	\begin{equation*}
		\ds{\lim_{t \to +\infty} \frac{M(t)}{t^{\frac{r-p}{p}}}>0}.
	\end{equation*}

	\item[(v)] For $\alpha>1$, the inequality
	\begin{equation*}
		\hat{M}(t+s)\geq \hat{M}(t)+\hat{M}(s) \ \ \ \ \forall \ t,s\ \in [0,+\infty[
	\end{equation*}
	holds true.

	\item[(vi)] For $\alpha \in (1,\frac{N}{N-p}]$, there holds
	\begin{equation*}
		\ds{\inf_{t>0}\frac{\hat{M}(t)}{t^{\frac{p^\star}{p}}}}=\left\{\begin{array}{lr}
			0& {\rm if} \ \   \alpha<\frac{N}{N-p} \\
			\frac{b}{\alpha}& {\rm if} \ \  \alpha=\frac{N}{N-p}.		\end{array}\right. \end{equation*}
\end{itemize}

Some comparison with related literature is in order.  For $S=S_N^{-\frac{p^\star}{p}}$ in \eqref{azzz}, and  $\alpha=p=2$ in \eqref{example}, we obtain the problem studied in the work \cite{FS}; in that paper, the conditions corresponding to \eqref{beta1} and \eqref{beta3} would be, respectively,
		\begin{equation*}
			a^{\frac{N-4}{2}}b\geq \frac{(N-4)^{\frac{N-4}{2}}}{N^{\frac{N-2}{2}}S_N^{\frac{N}{2}}}4
		\end{equation*}
	and
		\begin{equation*}
			a^{\frac{N-4}{2}}b> \frac{(N-4)^{\frac{N-4}{2}}}{(N-2)^{\frac{N-2}{2}}S_N^{\frac{N}{2}}}2.
		\end{equation*}
	In the present paper  we also achieve  an improvement with respect to \cite{faraci2020critical} concerning the semicontinuity property: in that paper, the condition corresponding to \eqref{beta1}, i.e.,
			\begin{equation*}
				\ds{\inf_{l > 0}\frac{\hat{M}(l)}{l^\frac{p^\star}{p}} \geq
					c_p}
			\end{equation*}
			where
			\begin{equation*}
				c_p=\left\{\begin{array}{lcl}
					\left(2^{p-1}-1\right)^{\frac{p^\star}{p}}\frac{p}{p^\star}S_N^{-\frac{P^\star}{p}}&if&p\geq2,\\
					2^{2p^\star-1-\frac{p^\star}{p}}\frac{p}{p^\star}S_N^{-\frac{p^\star}{p}}&if&1<p<2,
				\end{array}\right.
			\end{equation*} is  more restrictive than $(\beta_1)$ since  for $p\neq 2$ there holds
		\begin{equation*}
			 c_p>\frac{p}{p^\star}S_N^{-\frac{p^\star}{p}}.
		\end{equation*}
		
		\smallskip
		
	The following hypotheses on  the perturbation $f$ will be used throughout this work.
	\begin{enumerate} [label=\textcolor{blue}{$(f_\arabic*)$:}, ref=$f_\arabic*$]
		\item\label{f1} There exist $c_1,c_2>0$ and $q \in (p,p^\star)$ such that $ |f(x,t)|\leq c_1+ c_2|t|^{q-1}$ for $t \in \R$, and a.e. in $\Omega$;
		\item\label{f2} $\ds{\lim_{t \to 0}} \frac{f(x,t)}{|t|^{p- 1}}=0$ uniformly on $x \in \Omega$;
		\item \label{f3} $f(x,t)>0$ for every $t>0$, a.e in $\Omega$; and $f(x,t)<0$ for every $t<0$, a.e in $\Omega$. Moreover there exists $\mu>0$ such that  $f(x,t)\geq \mu >0$ for a.a. $x\in \Omega$ and every $t\in I$, being $I$ an open interval of $(0,+\infty)$.
	\end{enumerate}
An example of a function satisfying the conditions \eqref{f1}, \eqref{f2}, and \eqref{f3}, is
\begin{equation*}
	\begin{array}{rcl}
			f(x,t)=|t|^{q-2}t&\forall& t \in \R,
	\end{array}
\end{equation*}
where $q \in (p,p^\star)$ is fixed.

Now we introduce the main results of this paper, which we prove in the next sections.

The first result justifies the necessity for the parameterized perturbation.
\begin{theor}\label{main0}
	Under condition \eqref{beta3} there exists a number  $\lambda_1^\star>0$ such that for all $-\infty<\lambda< \lambda_1^{\star}$ problem \eqref{P} possesses only the trivial solution.
\end{theor}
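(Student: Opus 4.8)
The plan is to argue by contradiction and convert a hypothetical nontrivial solution into a lower bound on $\lambda$. Suppose $u\in W^{1,p}_0(\Omega)\setminus\{0\}$ solves \eqref{P}. Testing the weak formulation with $u$ (equivalently, imposing $\phi_u'(1)=0$ for the fiber $\phi_u(s):=\Phi_\lambda(su)$) yields
\begin{equation*}
	M(\|u\|^p)\,\|u\|^p=\|u\|_{p^\star}^{p^\star}+\lambda\int_\Omega f(x,u)\,u\,dx .
\end{equation*}
Writing $\delta:=\ds\inf_{t>0}\frac{M(t)}{t^{\frac{p^\star}{p}-1}}$, hypothesis \eqref{beta3} asserts $\delta>S$ and gives $M(\|u\|^p)\|u\|^p\ge\delta\|u\|^{p^\star}$, while \eqref{azzz} gives $\|u\|_{p^\star}^{p^\star}\le S\|u\|^{p^\star}$. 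Subtracting, we arrive at the master inequality
\begin{equation}\label{mainstar}
	(\delta-S)\,\|u\|^{p^\star}\le\lambda\int_\Omega f(x,u)\,u\,dx .
\end{equation}
If $\lambda\le 0$ the proof is already finished: by \eqref{f3} we have $f(x,u)u\ge 0$ a.e., so the right-hand side of \eqref{mainstar} is nonpositive whereas $\delta-S>0$, forcing $u=0$. It therefore suffices to produce $\lambda_1^\star\in(0,1]$ excluding nontrivial solutions for $0<\lambda<\lambda_1^\star$.

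The heart of the matter is a \emph{lower bound on $\|u\|$, uniform for small $\lambda$}. By \eqref{rho1}, the continuity of $M$, and (for large arguments) \eqref{beta3}, there is $\bar m>0$ with $M(t)\ge\bar m$ for every $t\ge 0$; it is this non-degeneracy of the Kirchhoff term that rules out arbitrarily small nontrivial solutions. From \eqref{f1}--\eqref{f2}, for each $\varepsilon>0$ there is $C_\varepsilon>0$ with $|f(x,t)\,t|\le\varepsilon|t|^p+C_\varepsilon|t|^q$; since $p<q<p^\star$ the embeddings $W^{1,p}_0(\Omega)\hookrightarrow L^p(\Omega),\,L^q(\Omega)$ give $\int_\Omega f(x,u)u\,dx\le\varepsilon c_p\|u\|^p+C_\varepsilon c_q\|u\|^q$. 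Inserting this and $\|u\|_{p^\star}^{p^\star}\le S\|u\|^{p^\star}$ into the identity above, fixing $\varepsilon$ once and for all so small that $\varepsilon c_p\le\bar m/2$, and absorbing the $\|u\|^p$ term on the left, we obtain for $0<\lambda\le 1$
\begin{equation*}
	\frac{\bar m}{2}\le S\,\|u\|^{p^\star-p}+C_\varepsilon c_q\,\|u\|^{q-p}.
\end{equation*}
The right-hand side is a strictly increasing function of $\|u\|$ vanishing at $0$, so $\|u\|\ge\kappa$ for some $\kappa>0$ depending only on $\bar m,S,C_\varepsilon,c_q,p,q,p^\star$ — in particular not on $\lambda$.

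To conclude, I return to \eqref{mainstar} and bound the perturbation crudely via \eqref{f1}, H\"older's inequality, and the Sobolev embeddings: $\int_\Omega f(x,u)u\,dx\le c_1\|u\|_1+c_2\|u\|_q^q\le K_1\|u\|+K_2\|u\|^q$. Hence every nontrivial solution with $0<\lambda\le1$ satisfies
\begin{equation*}
	\lambda\ge\frac{(\delta-S)\,\|u\|^{p^\star}}{K_1\|u\|+K_2\|u\|^q}=:h(\|u\|),\qquad \|u\|\ge\kappa .
\end{equation*}
Since $p^\star>q>1$, the map $h$ is continuous and strictly positive on $[\kappa,+\infty)$ and $h(s)\to+\infty$ as $s\to+\infty$, so $m_1:=\ds\inf_{s\ge\kappa}h(s)>0$. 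Taking $\lambda_1^\star:=\min\{m_1,1\}$ finishes the proof: for $\lambda<\lambda_1^\star$, either $\lambda\le 0$ and the first step gives $u=0$, or $0<\lambda<\lambda_1^\star\le m_1$ and a nontrivial solution would violate $\lambda\ge m_1$; in either case \eqref{P} has only the trivial solution.

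The step I expect to be the real obstacle is the uniform lower bound $\|u\|\ge\kappa$. Inequality \eqref{mainstar} by itself cannot exclude small nontrivial solutions — precisely because the perturbation $f$ is subcritical — so one genuinely has to exploit the non-degeneracy \eqref{rho1} of $M$ at the origin, and do it carefully: the auxiliary constant $\varepsilon$ (hence $C_\varepsilon$, hence $\kappa$) must be chosen \emph{before} restricting the range of $\lambda$, so that $\kappa$ does not degenerate as $\lambda\to 0^+$.
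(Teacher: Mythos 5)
Your proof is correct, but it takes a genuinely different route from the paper. The paper proves this theorem through the fibering machinery of Section 4: for each $u\neq 0$ it solves the system \eqref{systemlambdatderivative} to obtain the unique parameter $\lambda_1(u)$ at which the fiber derivative $\psi'_{\lambda,u}$ first acquires a zero of minimum type, sets $\lambda_1^\star:=\inf_{u\neq 0}\lambda_1(u)$, and then observes that for $\lambda<\lambda_1^\star$ one has $\psi'_{\lambda,u}(t)>\psi'_{\lambda_1(u),u}(t)\geq 0$ for all $t>0$, so no ray carries a critical point. You instead derive everything from the Nehari identity $M(\|u\|^p)\|u\|^p=\|u\|_{p^\star}^{p^\star}+\lambda\int_\Omega f(x,u)u\,dx$: condition \eqref{beta3} and \eqref{azzz} give $(\delta-S)\|u\|^{p^\star}\leq\lambda\int_\Omega f(x,u)u\,dx$, the sign condition \eqref{f3} disposes of $\lambda\leq 0$, and for small $\lambda>0$ you combine the uniform lower bound $\inf_{t\geq 0}M(t)>0$ (which does follow from \eqref{rho1}, continuity, and \eqref{beta3}) with the $\varepsilon$--$C_\varepsilon$ estimate from \eqref{f1}--\eqref{f2} to get a $\lambda$-independent lower bound $\|u\|\geq\kappa$, and then read off a positive lower bound for $\lambda$. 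Your argument is more elementary and self-contained; notably, it furnishes an explicit proof that the threshold is strictly positive, a point the paper leaves implicit (the positivity of its $\lambda_1^\star$ would require an argument analogous to Lemma \ref{extremalparametersign}(ii), which is not spelled out). What the paper's approach buys in exchange is a structurally meaningful, potentially larger threshold: its $\lambda_1^\star$ is an extremal parameter that fits into the comparison $\lambda_1^\star<\lambda_0^\star$ used elsewhere, whereas your $\lambda_1^\star=\min\{m_1,1\}$ is a crude a priori constant that need not coincide with it. Both satisfy the statement as written, since only the existence of some positive $\lambda_1^\star$ is claimed.
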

 The following existence result is the main goal of this paper.
\begin{theor}\label{main1}
	Assume conditions \eqref{rho1}, \eqref{rho2}, \eqref{beta1}, \eqref{beta2}, \eqref{f1}, \eqref{f2}, and \eqref{f3}. Then, there exists $\lambda_0^\star\geq0$ such that 
	\begin{itemize}	
				\item[(i)] if $\lambda>\lambda_0^\star$, then the energy functional $\Phi_{\lambda}$ has a global minimizer $u_\lambda$ such that $I_\lambda=\Phi_{\lambda}(u_\lambda)<0$ ( in particular that $u_\lambda\neq 0$);
		\item[(ii)] if $\lambda=\lambda_0^\star$, then the energy functional $\Phi_{\lambda}$ has a global minimizer $u_{\lambda_0^\star}$ such that $I_{\lambda_0^\star}=0$;  if the inequality in condition \eqref{beta1} is strict, then  $u_{\lambda_0^\star}\neq 0$; 
   	\item[(iii)]   if $\lambda<  \lambda_0^\star$, then for all $u \in W_0^{1,p}(\Omega)\backslash\{0\}$ there holds $\Phi_{\lambda}(u)>0$. Therefore, $u_\lambda=0$ is the only global minimizer of $\Phi_\lambda.$
	\end{itemize}
	\end{theor}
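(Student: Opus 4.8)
\emph{Proof plan.} The strategy is to combine two global features of the energy functional — coercivity and sequential weak lower semicontinuity — with a one–parameter analysis of $I_\lambda:=\inf_{W_0^{1,p}(\Omega)}\Phi_\lambda$, in the spirit of the extremal–parameter method. Set $\Psi(u):=\frac1p\hat{M}(\|u\|^p)-\frac1{p^\star}\|u\|_{p^\star}^{p^\star}$ and $J(u):=\int_\Omega F(x,u)\d$, so that $\Phi_\lambda=\Psi-\lambda J$ and $\Phi_\lambda(0)=0$; by \eqref{f3}, $F(x,t)>0$ whenever $t\neq 0$ a.e.\ in $\Omega$, hence $J(u)>0$ for every $u\neq0$. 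First I would establish coercivity, uniformly on bounded sets of $\lambda$: by \eqref{rho2} there is $r>p^\star$ with $\hat{M}(t)\ge c_1 t^{r/p}-c_2$ for all $t\ge 0$, while \eqref{f1} and the Sobolev embeddings yield $|J(u)|\le C(\|u\|+\|u\|^q)$ with $q<p^\star$, and $\|u\|_{p^\star}^{p^\star}\le S\|u\|^{p^\star}$; hence $\Phi_\lambda(u)\ge \frac{c_1}{p}\|u\|^r-\frac{c_2}{p}-\frac{S}{p^\star}\|u\|^{p^\star}-|\lambda|C(\|u\|+\|u\|^q)\to+\infty$ as $\|u\|\to\infty$.

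The hard part is the sequential weak lower semicontinuity of $\Phi_\lambda$, which is where the failure of compactness of $W_0^{1,p}(\Omega)\hookrightarrow L^{p^\star}(\Omega)$ has to be absorbed. Let $u_n\rightharpoonup u$. The perturbation term is weakly continuous, $J(u_n)\to J(u)$, by the compact embeddings into $L^p(\Omega)$ and $L^q(\Omega)$ together with \eqref{f1}, so it suffices to prove $\liminf_n\Psi(u_n)\ge\Psi(u)$. I would apply the second concentration–compactness principle of P.-L.\ Lions to $|\nabla u_n|^p$ and $|u_n|^{p^\star}$: along a subsequence there are points $\{x_j\}$ and numbers $\mu_j,\nu_j\ge 0$ with $\|u_n\|^p\to t_\infty\ge\|u\|^p+\sum_j\mu_j$, $\|u_n\|_{p^\star}^{p^\star}\to\|u\|_{p^\star}^{p^\star}+\sum_j\nu_j$ and $\nu_j\le S\mu_j^{p^\star/p}$. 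Since $\hat{M}$ is increasing and, by \eqref{beta2}, superadditive (iterated over the countable family, using $\hat{M}\ge0$), one has $\hat{M}(t_\infty)\ge\hat{M}(\|u\|^p)+\sum_j\hat{M}(\mu_j)$; and \eqref{beta1} together with $\mu_j^{p^\star/p}\ge S^{-1}\nu_j$ gives $\hat{M}(\mu_j)\ge S\frac{p}{p^\star}\mu_j^{p^\star/p}\ge\frac{p}{p^\star}\nu_j$. Therefore
\[
\liminf_{n}\Psi(u_n)=\frac1p\hat{M}(t_\infty)-\frac1{p^\star}\Big(\|u\|_{p^\star}^{p^\star}+\sum_j\nu_j\Big)\ge\frac1p\hat{M}(\|u\|^p)-\frac1{p^\star}\|u\|_{p^\star}^{p^\star}=\Psi(u).
\]
Together with coercivity, the direct method yields, for every $\lambda\in\R$, a global minimizer $u_\lambda$ of $\Phi_\lambda$ with $I_\lambda:=\Phi_\lambda(u_\lambda)>-\infty$ (which is a weak solution of \eqref{P}, since $\Phi_\lambda\in C^1$).

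Next I would analyse $\lambda\mapsto I_\lambda$. As a pointwise infimum of the affine functions $\lambda\mapsto\Psi(u)-\lambda J(u)$ it is concave, hence continuous on $\R$; it is non-increasing (all slopes $-J(u)\le 0$) and $I_\lambda\le\Phi_\lambda(0)=0$. By \eqref{beta1}, $\Psi(u)\ge\frac1p S\frac{p}{p^\star}\|u\|^{p^\star}-\frac1{p^\star}S\|u\|^{p^\star}=0$, so $\Phi_0=\Psi\ge0$ and $I_0=0$; while testing with any fixed $u_0\neq0$ gives $I_\lambda\le\Phi_\lambda(u_0)\to-\infty$ as $\lambda\to+\infty$. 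Hence $\lambda_0^\star:=\sup\{\lambda\in\R:\ I_\lambda=0\}$ satisfies $0\le\lambda_0^\star<+\infty$, and by monotonicity and continuity $\{\lambda:\ I_\lambda=0\}=(-\infty,\lambda_0^\star]$. Part (i): if $\lambda>\lambda_0^\star$ then $I_\lambda<0=\Phi_\lambda(0)$, so the minimizer $u_\lambda$ is nontrivial and $\Phi_\lambda(u_\lambda)=I_\lambda<0$. Part (iii): if $\lambda<\lambda_0^\star$ then $\Phi_{\lambda_0^\star}\ge0$, so for $u\neq0$, $\Phi_\lambda(u)=\Phi_{\lambda_0^\star}(u)+(\lambda_0^\star-\lambda)J(u)\ge(\lambda_0^\star-\lambda)J(u)>0=\Phi_\lambda(0)$, whence $u_\lambda=0$ is the unique global minimizer.

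Finally, part (ii). Since $I_{\lambda_0^\star}=0$ there is a global minimizer $u_{\lambda_0^\star}$ with $\Phi_{\lambda_0^\star}(u_{\lambda_0^\star})=0$; it remains to produce a nontrivial one under strict \eqref{beta1}. I would take $\lambda_n\downarrow\lambda_0^\star$ and nontrivial minimizers $u_n$ with $\Phi_{\lambda_n}(u_n)=I_{\lambda_n}<0$. By the uniform coercivity $\{u_n\}$ is bounded, and $\liminf_n\|u_n\|>0$: indeed \eqref{rho1} forces $\Psi(u)\ge c_0\|u\|^p$ for $\|u\|$ small, and \eqref{f1}–\eqref{f2} give $\lambda_n|J(u)|\le\frac{c_0}{2}\|u\|^p+C\|u\|^q$ for suitable constants, so $\Phi_{\lambda_n}(u_n)>0$ whenever $\|u_n\|$ is small — impossible. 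Passing to a weak limit $u_n\rightharpoonup u^\star$, lower semicontinuity and $\Phi_{\lambda_0^\star}(u_n)=I_{\lambda_n}+(\lambda_n-\lambda_0^\star)J(u_n)\to0$ give $\Phi_{\lambda_0^\star}(u^\star)\le0=I_{\lambda_0^\star}$, so $u^\star$ is a global minimizer; and $u^\star\neq0$, since $u^\star=0$ would force $J(u_n)\to0$, hence $\Psi(u_n)=\Phi_{\lambda_n}(u_n)+\lambda_nJ(u_n)\to0$, contradicting $\Psi(u_n)\ge\varepsilon\|u_n\|^{p^\star}\ge\varepsilon(\liminf_n\|u_n\|)^{p^\star}>0$, which is exactly where strict \eqref{beta1} enters. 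The crux of the whole argument is the semicontinuity step of the second paragraph: the rest follows softly from concavity and monotonicity of $I_\lambda$, whereas controlling the concentration mass $\sum_j\nu_j$ requires precisely the interplay of the concentration inequality $\nu_j\le S\mu_j^{p^\star/p}$, the superadditivity \eqref{beta2}, and the borderline growth \eqref{beta1}.
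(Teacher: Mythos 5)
Your argument is correct, and for the core analytic steps (coercivity from \eqref{rho2}, sequential weak lower semicontinuity via Lions' second concentration--compactness principle combined with the superadditivity \eqref{beta2} and the borderline bound \eqref{beta1}, and the nontriviality of the minimizer at $\lambda=\lambda_0^\star$ under strict \eqref{beta1}) it coincides with the paper's proof (Lemma \ref{semicontinuidade}, Propositions \ref{existenceglobalminimizer} and \ref{existencesolutioncriticalcasenonzero}). Where you genuinely diverge is in the construction of the extremal parameter: the paper defines $\lambda_0(u)$ fiberwise through the system \eqref{systemlambdat} (existence and $\lambda$-uniqueness of a fiber with interior zero minimum), sets $\lambda_0^\star=\inf_u\lambda_0(u)$, and proves $\lambda_0^\star\ge 0$ by a separate contradiction argument (Lemma \ref{extremalparametersign}), whereas you define $\lambda_0^\star=\sup\{\lambda: I_\lambda=0\}$ directly from the value function, exploiting that $\lambda\mapsto I_\lambda$ is a finite, concave, non-increasing function with $I_0=0$ (by \eqref{beta1}) and $I_\lambda\to-\infty$; parts (i) and (iii) and the closedness of $\{I_\lambda=0\}$ then follow softly, with (iii) reduced to $\Phi_\lambda(u)=\Phi_{\lambda_0^\star}(u)+(\lambda_0^\star-\lambda)J(u)>0$ using $J(u)>0$ from \eqref{f3}. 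Your route is more economical for Theorem \ref{main1} as stated (it needs neither Lemma \ref{fibersfirstproperties} nor the uniqueness analysis of \eqref{systemlambdat}); the paper's fiber-based $\lambda_0(u)$ buys more, since it is reused for the local minimizers, the mountain-pass geometry, and the non-existence threshold $\lambda_1^\star$, and the two definitions of $\lambda_0^\star$ agree precisely because Proposition \ref{existenceglobalminimizer} shows $I_\lambda=0$ for $\lambda<\inf_u\lambda_0(u)$ and $I_\lambda<0$ above it. Two small points to tighten: in the semicontinuity step one should first pass to a subsequence along which $\Phi_{\lambda_k}(u_k)$, $\|u_k\|^p$ and $\|u_k\|_{p^\star}^{p^\star}$ all converge before splitting the $\liminf$; and in part (ii) the lower bound $\liminf_n\|u_n\|>0$ should be stated uniformly in $n$, which your use of a common $\lambda$-bound and the $\varepsilon|t|^p+c|t|^q$ estimate for $F$ does deliver.
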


The next result shows that although we may not find non-trivial global minimizers for the energy functional for $\lambda<\lambda_0^\star$, we may still find non-trivial local minimizers, as long as the parameter $\lambda$ is close enough to $\lambda_0^\star$.
		\begin{theor}\label{main2}
			Assume conditions \eqref{rho1}, \eqref{rho2}, \eqref{beta1}, \eqref{beta2}, \eqref{f1}, \eqref{f2}, and \eqref{f3}. If the inequality in condition \eqref{beta1} is strict, then there exists $\epsilon >0$ small enough so that for each $\lambda \in (\lambda_0^\star-\epsilon,\lambda_0^\star)$ the energy functional $\Phi_{\lambda}$ possesses a local minimizer with positive energy.
	\end{theor}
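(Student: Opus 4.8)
The plan is to run a localized minimization: for $\lambda$ just below $\lambda_0^\star$ we isolate a bounded annular region of $W_0^{1,p}(\Omega)$ whose boundary carries a uniform positive energy barrier and which contains a point of small positive energy, and we minimize $\Phi_\lambda$ over it. First I would set up a mountain--pass--type geometry that is uniform in $\lambda$ near $\lambda_0^\star$. From $\eqref{rho1}$ one has $\hat M(t)\ge\frac12 M(0^+)\,t$ for small $t$, and combining this with $\eqref{f1}$, $\eqref{f2}$ and the Sobolev inequality yields a continuous function $g$, independent of $\lambda$ on a fixed interval $[\lambda_0^\star-\epsilon_0,\lambda_0^\star]$, with $g>0$ on some $(0,s^\star)$ and $\Phi_\lambda(u)\ge g(\|u\|)$ for $\|u\|$ small. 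Fixing $\rho_1\in(0,s^\star)$ and $\delta:=g(\rho_1)>0$, we get $\Phi_\lambda(u)\ge\delta$ on $\{\|u\|=\rho_1\}$ and $\Phi_\lambda(u)>0$ on $\{0<\|u\|\le\rho_1\}$, uniformly in $\lambda$. By $\eqref{rho2}$, $\Phi_\lambda$ is coercive uniformly in $\lambda\in[\lambda_0^\star-\epsilon_0,\lambda_0^\star]$, so there is $\rho_2>\|u_{\lambda_0^\star}\|$ with $\Phi_\lambda(u)\ge\delta$ on $\{\|u\|\ge\rho_2\}$, where $u_{\lambda_0^\star}$ is the global minimizer at $\lambda=\lambda_0^\star$ provided by Theorem \ref{main1}(ii), which is nontrivial because the inequality in $\eqref{beta1}$ is assumed strict. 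Since $\Phi_{\lambda_0^\star}(u_{\lambda_0^\star})=0$ while $\Phi_{\lambda_0^\star}>0$ on $\{0<\|u\|\le\rho_1\}$, necessarily $\|u_{\lambda_0^\star}\|>\rho_1$; thus $u_{\lambda_0^\star}$ lies in the open annulus $A:=\{\rho_1<\|u\|<\rho_2\}$ and, writing $C_0:=\int_\Omega F(x,u_{\lambda_0^\star})\d>0$ (using $\eqref{f3}$), one has $\Phi_\lambda(u_{\lambda_0^\star})=\Phi_{\lambda_0^\star}(u_{\lambda_0^\star})+(\lambda_0^\star-\lambda)C_0=(\lambda_0^\star-\lambda)C_0$.

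Set $c_\lambda:=\inf\{\Phi_\lambda(u):\rho_1\le\|u\|\le\rho_2\}$. For $\lambda<\lambda_0^\star$, Theorem \ref{main1}(iii) gives $\Phi_\lambda>0$ on this closed annulus, so $c_\lambda\ge0$, and $c_\lambda\le\Phi_\lambda(u_{\lambda_0^\star})=(\lambda_0^\star-\lambda)C_0\to0^+$ as $\lambda\uparrow\lambda_0^\star$. I would apply Ekeland's variational principle on the closed (hence complete) set $\{\rho_1\le\|u\|\le\rho_2\}$ to obtain a minimizing sequence $(u_n)$ with $\Phi_\lambda(u_n)\to c_\lambda$ and $\Phi_\lambda(v)\ge\Phi_\lambda(u_n)-\frac1n\|v-u_n\|$ for all admissible $v$. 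If $\epsilon\in(0,\epsilon_0)$ is chosen so small that $(\lambda_0^\star-\lambda)C_0<\delta$ for $\lambda\in(\lambda_0^\star-\epsilon,\lambda_0^\star)$, then the boundary barrier forces $\rho_1<\|u_n\|<\rho_2$ for all large $n$, so the Ekeland inequality holds for $v$ in a full neighbourhood of $u_n$, and $(u_n)$ is a bounded Palais--Smale sequence for $\Phi_\lambda$ at level $c_\lambda$.

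The crux --- and the reason strictness in $\eqref{beta1}$ is required --- is to regain compactness. Up to a subsequence $u_n\rightharpoonup u_\lambda$; applying the second concentration--compactness lemma we may write $|\nabla u_n|^p\rightharpoonup\mu\ge|\nabla u_\lambda|^p+\sum_j\mu_j\delta_{x_j}$ and $|u_n|^{p^\star}\rightharpoonup|u_\lambda|^{p^\star}+\sum_j\nu_j\delta_{x_j}$ with $\nu_j\le S\mu_j^{p^\star/p}$, while $\int_\Omega F(x,u_n)\d\to\int_\Omega F(x,u_\lambda)\d$ by the compact embedding $W_0^{1,p}(\Omega)\hookrightarrow L^q(\Omega)$. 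Just as in the proof of the sequential weak lower semicontinuity of $\Phi_\lambda$, the superadditivity $\eqref{beta2}$ together with $\eqref{beta1}$ gives
\[
c_\lambda=\lim_n\Phi_\lambda(u_n)\ \ge\ \Phi_\lambda(u_\lambda)+\sigma_0\sum_j\mu_j^{p^\star/p},
\]
where $\sigma_0>0$ is the gap in the strict inequality $\ds{\inf_{t>0}\hat M(t)/t^{p^\star/p}>\tfrac{p}{p^\star}S}$. On the other hand, testing the Palais--Smale condition against a cut--off of $u_n$ around each $x_j$ yields $M(\mu(\overline\Omega))\,\mu_j=\nu_j\le S\mu_j^{p^\star/p}$, so each nonzero $\mu_j$ is bounded below by a positive constant depending only on $\rho_1,\rho_2$ (here the non--degeneracy of $M$ and $\|u_n\|\in[\rho_1,\rho_2]$ enter). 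Hence if any concentration occurred we would get $c_\lambda\ge\Phi_\lambda(u_\lambda)+\delta_2\ge\delta_2$ for some $\delta_2>0$ independent of $\lambda$, contradicting $c_\lambda\to0$ once $\epsilon$ is shrunk further so that $(\lambda_0^\star-\lambda)C_0<\delta_2$. Therefore $\mu_j=0$ for all $j$, so $u_n\to u_\lambda$ in $L^{p^\star}(\Omega)$ by Brezis--Lieb, and then the $(S_+)$ property of $-\Delta_p$ (recalling $M(\|u_n\|^p)\ge m_0>0$ on $[\rho_1^p,\rho_2^p]$) promotes this to $u_n\to u_\lambda$ in $W_0^{1,p}(\Omega)$. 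Consequently $u_\lambda$ is a critical point of $\Phi_\lambda$ with $\Phi_\lambda(u_\lambda)=c_\lambda$; the barrier on $\{\|u\|\in\{\rho_1,\rho_2\}\}$ rules out $\|u_\lambda\|\in\{\rho_1,\rho_2\}$, so $u_\lambda\in A$ minimizes $\Phi_\lambda$ over the open set $A$ and is a genuine local minimizer, with $\Phi_\lambda(u_\lambda)=c_\lambda>0$ by Theorem \ref{main1}(iii) since $u_\lambda\ne0$. Had $\eqref{beta1}$ held only with equality, this last step would fail: a concentrating minimizing sequence could carry off the entire (vanishing) energy $c_\lambda$, leaving only the trivial critical point.
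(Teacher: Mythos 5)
Your argument is correct in substance but follows a genuinely different route from the paper's. The paper does not minimize over a norm annulus: it constrains the minimization to $K_\delta$, a $\delta$-neighbourhood of the set $K=\{u\neq 0:\Phi_{\lambda_0^\star}(u)=0\}$. Using the strict form of \eqref{beta1} and Corollary \ref{corolariosemicontinuidade} it shows that $K$ is compact, hence that $K_\delta$ is sequentially weakly closed, that $0\notin K_\delta$, and that $\Phi_{\lambda_0^\star}$ has a positive infimum on $\partial K_\delta$; the direct method then applies with no Palais--Smale analysis at all (coercivity, weak lower semicontinuity and weak closedness of $K_\delta$ give a minimizer, and the boundary barrier pushes it into the interior for $\lambda$ near $\lambda_0^\star$). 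Your annulus $\{\rho_1\le\|u\|\le\rho_2\}$ fails to be weakly closed on its inner sphere, and that is exactly what forces you into the heavier machinery (Ekeland, a PS sequence, the cut-off test at each concentration point, the $(S_+)$ property); the paper's choice of constraint set is engineered precisely to avoid this. What your approach buys is that you never need to study the structure of $K$; what the paper's buys is a much shorter compactness step that reuses Lemma \ref{semicontinuidade} verbatim.

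Two assertions in your write-up need justification before the proof is complete. First, you use twice that $M\ge m_0>0$ on $[\rho_1^p,\rho_2^p]$ (to bound each nonzero $\mu_j$ from below, and to run the $(S_+)$ argument); ``non-degeneracy of $M$'' is not among the hypotheses, but the bound does follow from them: \eqref{beta2} gives $\hat M(t+s)-\hat M(t)\ge \hat M(s)$, and dividing by $s$ and letting $s\to 0^+$ yields $M(t)\ge \lim_{s\to 0^+}M(s)>0$ by \eqref{rho1} and the continuity of $M$ --- you should record this. Second, the identity $M(\mu(\overline\Omega))\,\mu_j=\nu_j$ obtained from the cut-off test is really only the chain $m_0\mu_j\le\nu_j\le S\mu_j^{p^\star/p}$, which is fortunately the direction you need. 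With these repairs the argument closes; note that your $(S_+)$ step is also what excludes a diffuse (non-atomic) loss of gradient mass through the inner sphere, so it cannot be dropped even after all the atoms $\mu_j$ are shown to vanish.
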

The next result states the existence of a mountain pass type solution.
\begin{theor}\label{main3}
Assume conditions \eqref{rho1}, \eqref{rho2},  \eqref{beta2}, \eqref{beta3}, \eqref{f1}, \eqref{f2}, and \eqref{f3}.  Then, there exists  $\epsilon>0$ small enough such that for each $\lambda>\lambda_0^\star-\epsilon$, problem \eqref{P} has a solution of mountain pass type.
 \end{theor}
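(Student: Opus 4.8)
The plan is to obtain the solution through the classical mountain pass theorem. First I would set up a mountain pass geometry for $\Phi_\lambda$ whose two ``valleys'' are the origin and the (global or local) minimizer already provided by Theorems \ref{main1} and \ref{main2}, and then I would show that, under the \emph{strict} condition \eqref{beta3}, $\Phi_\lambda$ satisfies the Palais--Smale condition at \emph{every} level. To begin with, $u=0$ is a strict local minimizer of $\Phi_\lambda$: by \eqref{rho1} one has $\hat M(\|u\|^p)\geq \tfrac{\ell}{2}\|u\|^p$ for $\|u\|$ small, where $\ell:=\lim_{t\to 0^+}M(t)>0$, while $\|u\|_{p^\star}^{p^\star}\leq S\|u\|^{p^\star}$ and, by \eqref{f1}--\eqref{f2}, $\left|\int_\Omega F(x,u)\,dx\right|\leq \eta\|u\|^p+C_\eta\|u\|^q$ with $q\in(p,p^\star)$ and $\eta$ arbitrarily small; choosing $\eta$ small (depending on the fixed $\lambda$) gives $\Phi_\lambda(u)\geq \tfrac{\ell}{4p}\|u\|^p-C\|u\|^q-C'\|u\|^{p^\star}>0$ for $0<\|u\|\leq\rho_\lambda$, hence $\inf_{\|u\|=\rho_\lambda}\Phi_\lambda\geq\alpha_\lambda>0=\Phi_\lambda(0)$.

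For the second valley I would invoke Theorems \ref{main1} and \ref{main2}, which apply since \eqref{beta3} forces the strict form of \eqref{beta1} and since \eqref{beta2}, \eqref{f3} are assumed; one shrinks $\epsilon$ to at most the threshold given by Theorem \ref{main2}. If $\lambda\geq\lambda_0^\star$, let $v_\lambda:=u_\lambda$ be the global minimizer; then $\Phi_\lambda(v_\lambda)\leq 0$ and, since $\Phi_\lambda>0$ on $\overline{B_{\rho_\lambda}}\setminus\{0\}$, $\|v_\lambda\|>\rho_\lambda$ --- a classical mountain pass configuration with base point $0$. If $\lambda_0^\star-\epsilon<\lambda<\lambda_0^\star$, let $v_\lambda$ be the (strict) local minimizer of positive energy from Theorem \ref{main2}; since $\Phi_\lambda(0)=0<\Phi_\lambda(v_\lambda)$, this is again a mountain pass configuration, now with base point $v_\lambda$ and the origin as the lower far point. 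In either case the min--max level $c_\lambda:=\inf_\gamma\max_{s\in[0,1]}\Phi_\lambda(\gamma(s))$, taken over continuous paths joining $0$ to $v_\lambda$, is strictly positive (bounded below by $\alpha_\lambda$ in the first case, by $\inf_{\partial B_r(v_\lambda)}\Phi_\lambda>\Phi_\lambda(v_\lambda)>0$ for small $r$ in the second).

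The core of the argument is the Palais--Smale condition, where I would use the second concentration--compactness lemma of Lions. Let $(u_n)$ be a $(PS)_c$ sequence. Condition \eqref{rho2} makes $\hat M(t)$ grow like $t^{r/p}$ with $r>p^\star$, so $\Phi_\lambda$ is coercive and $(u_n)$ is bounded; passing to a subsequence, $u_n\rightharpoonup u$, $\|u_n\|^p\to t_\infty$, $|\nabla u_n|^p\rightharpoonup\mu\geq|\nabla u|^p+\sum_j\mu_j\delta_{x_j}$ and $|u_n|^{p^\star}\rightharpoonup\nu=|u|^{p^\star}+\sum_j\nu_j\delta_{x_j}$ with $\nu_j\leq S\mu_j^{p^\star/p}$. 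Testing $\Phi_\lambda'(u_n)\to 0$ against $\psi_{j,\varepsilon}u_n$ --- $\psi_{j,\varepsilon}$ a cut-off equal to $1$ near $x_j$, supported in $B_\varepsilon(x_j)$ and with $\nabla\psi_{j,\varepsilon}$ supported in an annulus, so that Hölder's inequality on the annulus controls the $p$-Laplacian cross term by $\nu$ there and it vanishes as $\varepsilon\to 0$ --- and using $M(\|u_n\|^p)\to M(t_\infty)$, I expect to arrive at $M(t_\infty)\,\mu_j=\nu_j$. Hence either $\mu_j=0$, or $M(t_\infty)\leq S\,\mu_j^{p^\star/p-1}\leq S\,t_\infty^{p^\star/p-1}$ (since $\mu_j\leq\mu(\overline\Omega)=t_\infty$), the latter contradicting \eqref{beta3}. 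So $\mu_j=\nu_j=0$ for all $j$ and $u_n\to u$ in $L^{p^\star}(\Omega)$; substituting this into $\langle\Phi_\lambda'(u_n),u_n-u\rangle\to 0$, and using \eqref{f1} with the compact embedding $W_0^{1,p}(\Omega)\hookrightarrow L^q(\Omega)$, the bound $M(t_\infty)>0$ (from \eqref{beta3} if $t_\infty>0$, trivial otherwise), and the $(S_+)$ property of $-\Delta_p$, one gets $u_n\to u$ strongly. Thus $\Phi_\lambda$ satisfies $(PS)_c$ for every $c\in\R$, and the mountain pass theorem yields a critical point $w_\lambda$ with $\Phi_\lambda(w_\lambda)=c_\lambda>0$, which is non-trivial and distinct from $v_\lambda$ (because $\Phi_\lambda(0)=0$ and $\Phi_\lambda(v_\lambda)\leq 0$, resp. $<c_\lambda$), i.e. a solution of \eqref{P} of mountain pass type.

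I expect the main obstacle to be the compactness step just sketched. The non-local coefficient $M(\|u_n\|^p)$ prevents the concentration analysis from decoupling pointwise as in the semilinear case, so one must drag the a priori unknown mass $t_\infty$ through the cut-off computation and keep careful track of the $p$-Laplacian cross terms (hence the annulus choice for $\nabla\psi_{j,\varepsilon}$) and of the convergence $M(\|u_n\|^p)\to M(t_\infty)$. The \emph{strict} inequality in \eqref{beta3} is exactly what makes this work: it yields $M(t_\infty)>S\,t_\infty^{p^\star/p-1}$, incompatible with $M(t_\infty)\mu_j=\nu_j\leq S\mu_j^{p^\star/p}\leq S\,t_\infty^{p^\star/p-1}\mu_j$ when $\mu_j>0$, so no atom can survive and $(PS)$ holds at all levels --- with mere equality in \eqref{beta3} a single persistent atom could not be excluded and one would instead need a test-function estimate placing $c_\lambda$ below a compactness threshold. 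A secondary difficulty is the geometry for $\lambda<\lambda_0^\star$: there $\Phi_\lambda>0$ off the origin, so no point of non-positive energy other than $0$ is available, and the mountain pass must be run from the strict local minimizer of Theorem \ref{main2} down to the origin.
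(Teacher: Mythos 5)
Your overall strategy --- mountain pass geometry with the origin as one valley, plus the Palais--Smale condition obtained from the second concentration--compactness lemma, with the strict inequality in \eqref{beta3} used to kill the atoms and the $(S_+)$ property to upgrade to strong convergence --- is exactly the paper's strategy, and your compactness argument is essentially Lemma \ref{PS} (the paper bounds $M(\|u_k\|^p)\geq L\bigl(\int|\nabla u_k|^p\phi_\epsilon\bigr)^{\frac{p^\star}{p}-1}$ directly rather than passing to $M(t_\infty)$, but the two computations are interchangeable).

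The genuine gap is in your geometry for $\lambda_0^\star-\epsilon<\lambda<\lambda_0^\star$. You anchor the second valley at the local minimizer $v_\lambda$ of Theorem \ref{main2} and bound the min--max level below by $\inf_{\partial B_r(v_\lambda)}\Phi_\lambda>\Phi_\lambda(v_\lambda)$ ``for small $r$''. But Theorem \ref{main2} produces a local minimizer, not a \emph{strict} one, and even strictness of the local minimum would not by itself give a strictly positive gap on a small sphere around $v_\lambda$ (the infimum over $\partial B_r(v_\lambda)$ need not be attained there, and could equal $\Phi_\lambda(v_\lambda)$); without that gap the classical mountain pass theorem does not apply, and you would need a limiting (Ghoussoub--Preiss / Pucci--Serrin) version or an argument via $\inf_{\partial K_\delta}\Phi_{\lambda_0^\star}>0$ from Lemma \ref{chalala}. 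The paper sidesteps the whole case distinction: it keeps the base point at $0$ (a strict local minimum with rim value $\sigma>0$ on $\|u\|=R$, by \eqref{rho1} and \eqref{f2}) and takes as the \emph{fixed} far endpoint the nontrivial minimizer $u_{\lambda_0^\star}$ at the extremal parameter, for which $\Phi_{\lambda_0^\star}(u_{\lambda_0^\star})=0$; by monotonicity and continuity of $\lambda\mapsto\Phi_\lambda(u_{\lambda_0^\star})$ one gets $\Phi_\lambda(u_{\lambda_0^\star})<\sigma$ for all $\lambda>\lambda_0^\star-\epsilon$ with $R<\|u_{\lambda_0^\star}\|$ (Lemma \ref{geometry}), which is all the mountain pass theorem requires of the endpoint --- it need not have nonpositive energy. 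This single configuration covers both $\lambda\geq\lambda_0^\star$ and $\lambda$ slightly below $\lambda_0^\star$, and it uses only Proposition \ref{existencesolutioncriticalcasenonzero} (which needs strict \eqref{beta1}, guaranteed by \eqref{beta3}), not Theorem \ref{main2}. I recommend replacing your second-case geometry with this argument.
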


	\section{Abstract Results}
	Now we proceed to describe the abstract results which allow us to deduce our main theorems, stated in  Section \ref{Intro}.

	For each $u \in W_0^{1,p}(\Omega)\backslash\{0\}$ and  $\lambda \geq 0$, we define the fiber function $\psi_{\lambda,u}:[0,+\infty[\mapsto \R$ by $\psi_{\lambda,u}(t):=\Phi_\lambda(tu)$.
	
\begin{lem}\label{fibersfirstproperties} Assume conditions \eqref{f1} and \eqref{f2}. Then, the following assertions  hold true:
		
\begin{itemize}	
\item[(i):]	Under condition \eqref{rho1}, there exist $\epsilon_1=\epsilon_1(\lambda,u)>0$ and $\epsilon_2=\epsilon_2(\lambda,u)>0$ such that $\psi_{\lambda,u}(t)>0 \ \forall t \in (0,\epsilon_1)$, and $\psi'_{\lambda,u}(t)>0 \ \forall t \in (0,\epsilon_2)$;
\item[(ii):] Under condition \eqref{rho2}, there holds $\ds{\lim_{t\to \infty}\psi_{\lambda,u}(t)}=+\infty$ and $ \ds{\lim_{t\to \infty}\psi'_{\lambda,u}(t)=+\infty}$.
\end{itemize}
		\end{lem}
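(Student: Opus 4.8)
The plan is to write the fibers and their derivatives in closed form and to estimate them term by term near $t=0$ and near $t=+\infty$. Since $\Phi_\lambda$ is explicit and $\|tu\|=t\|u\|$,
\[\psi_{\lambda,u}(t)=\frac1p\,\hat M\big(t^p\|u\|^p\big)-\frac{t^{p^\star}}{p^\star}\|u\|_{p^\star}^{p^\star}-\lambda\int_\Omega F(x,tu)\,dx,\]
and, because $M$ is continuous and \eqref{f1} permits differentiation under the integral sign,
\[\psi'_{\lambda,u}(t)=M\big(t^p\|u\|^p\big)\,t^{p-1}\|u\|^p-t^{p^\star-1}\|u\|_{p^\star}^{p^\star}-\lambda\int_\Omega f(x,tu)\,u\,dx,\]
so $\psi_{\lambda,u}$ is continuous on $[0,+\infty)$ and of class $C^1$ on $(0,+\infty)$. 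A standard preliminary follows from \eqref{f1} and \eqref{f2}: for every $\varepsilon>0$ there is $C_\varepsilon>0$ with $|f(x,t)|\le\varepsilon|t|^{p-1}+C_\varepsilon|t|^{q-1}$, hence $|F(x,t)|\le\frac{\varepsilon}{p}|t|^p+\frac{C_\varepsilon}{q}|t|^q$, for all $t\in\R$ and a.e.\ $x\in\Omega$. Combined with the continuous embeddings $W^{1,p}_0(\Omega)\hookrightarrow L^p(\Omega)$ and $W^{1,p}_0(\Omega)\hookrightarrow L^q(\Omega)$ (valid since $p<q<p^\star$), this yields a constant $C>0$ with $\big|\int_\Omega F(x,tu)\,dx\big|\le C\big(\varepsilon t^p\|u\|^p+C_\varepsilon t^q\|u\|^q\big)$ and $\big|\int_\Omega f(x,tu)u\,dx\big|\le C\big(\varepsilon t^{p-1}\|u\|^p+C_\varepsilon t^{q-1}\|u\|^q\big)$.

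For (i), let $m_0:=\lim_{t\to0^+}M(t)>0$ from \eqref{rho1} and pick $s_0>0$ with $M(s)\ge m_0/2$ on $(0,s_0)$, whence also $\hat M(s)\ge(m_0/2)s$ there. Assuming $\lambda>0$ (the case $\lambda=0$ is immediate), fix $\varepsilon>0$ so small that $\lambda C\varepsilon<m_0/4$. Then for all $t>0$ with $t^p\|u\|^p<s_0$,
\[\psi_{\lambda,u}(t)\ge\frac{m_0}{4p}t^p\|u\|^p-\frac{t^{p^\star}}{p^\star}\|u\|_{p^\star}^{p^\star}-\frac{\lambda CC_\varepsilon}{q}t^q\|u\|^q,\]
\[\psi'_{\lambda,u}(t)\ge\frac{m_0}{4}t^{p-1}\|u\|^p-t^{p^\star-1}\|u\|_{p^\star}^{p^\star}-\lambda CC_\varepsilon t^{q-1}\|u\|^q.\]
Since $p<q$ and $p<p^\star$, dividing the two right-hand sides by $t^p$ and by $t^{p-1}$ respectively shows they converge to $\frac{m_0}{4p}\|u\|^p>0$ and to $\frac{m_0}{4}\|u\|^p>0$ as $t\to0^+$; hence they are positive on suitable intervals $(0,\epsilon_1)$ and $(0,\epsilon_2)$. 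Note that $\psi'_{\lambda,u}(0)=0$, so the second conclusion can only be asserted on the punctured interval.

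For (ii), let $\ell:=\lim_{t\to+\infty}M(t)/t^{(r-p)/p}>0$ with $r>p^\star$ from \eqref{rho2}, and choose $s_1>0$ with $M(s)\ge(\ell/2)s^{(r-p)/p}$ for $s\ge s_1$; integrating (and using $M\ge0$) gives $\hat M(s)\ge c_0\,s^{r/p}$ for all large $s$, where $c_0:=\ell p/(4r)>0$. For the perturbation the crude bound $|F(x,t)|\le c_1|t|+\frac{c_2}{q}|t|^q$ from \eqref{f1} now suffices, so that $\big|\int_\Omega F(x,tu)\,dx\big|\le c_1 t\|u\|_1+\frac{c_2}{q}t^q\|u\|_q^q$ and $\big|\int_\Omega f(x,tu)u\,dx\big|\le c_1\|u\|_1+c_2 t^{q-1}\|u\|_q^q$. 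Hence, for $t$ large,
\[\psi_{\lambda,u}(t)\ge\frac{c_0}{p}t^r\|u\|^r-\frac{t^{p^\star}}{p^\star}\|u\|_{p^\star}^{p^\star}-\lambda c_1 t\|u\|_1-\frac{\lambda c_2}{q}t^q\|u\|_q^q,\]
\[\psi'_{\lambda,u}(t)\ge\frac{\ell}{2}t^{r-1}\|u\|^r-t^{p^\star-1}\|u\|_{p^\star}^{p^\star}-\lambda c_1\|u\|_1-\lambda c_2 t^{q-1}\|u\|_q^q,\]
and since $r>p^\star>q>1$ the leading term dominates, forcing both expressions to $+\infty$. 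The proof is essentially a computation; the only points requiring attention are the differentiation under the integral sign, the passage from the asymptotic hypotheses \eqref{rho1} and \eqref{rho2} to the pointwise bounds on $M$ and $\hat M$ near $0$ and $+\infty$, and the choice of $\varepsilon$ small enough that the unfavorable $t^p$-term of the perturbation is absorbed by the Kirchhoff term in (i) — none of which is a genuine obstacle.
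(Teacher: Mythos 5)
Your proof is correct and follows essentially the same route as the paper's: isolate the leading $t^p$ (resp.\ $t^r$) power via the growth estimate on $F$ coming from $(f_1)$--$(f_2)$ together with pointwise lower bounds on $M$ and $\hat M$ derived from $(\rho_1)$ and $(\rho_2)$; in fact you do slightly more, since the paper only writes out the estimates for $\psi_{\lambda,u}$ and leaves the analogous ones for $\psi'_{\lambda,u}$ implicit. The only quibble is a harmless constant in part (i): to retain the coefficient $\tfrac{m_0}{4p}$ you should require $\lambda C\varepsilon\le\tfrac{m_0}{4p}$ rather than $\lambda C\varepsilon<\tfrac{m_0}{4}$ (for $p>2$ the latter does not imply the former), but choosing $\varepsilon$ accordingly fixes this immediately.
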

	\begin{proof} We will prove the claim for $\psi_{\lambda,u}$. In fact, write
			\begin{equation*}\label{fiber1}\psi_{\lambda,u}(t)=t^p\|u\|^p\left[\frac{1}{p}\frac{\hat{M}(t^p\|u\|^p)}{t^p\|u\|^p}-\frac{t^{p^\star-p}}{p^\star}\frac{\|u\|_{p^\star}^{p^\star}}{\|u\|^p}-\frac{\lambda}{\|u\|^p}\int_{\Omega}\frac{F(x,tu(x))}{t^p}dx\right].
			\end{equation*} 		
Note that by conditions \eqref{f1} and \eqref{f2},  for each $\varepsilon>0$ there exists $c>0$ such that
\[|F(x,t)|\leq \varepsilon |t|^p+c|t|^q  \ \hbox{for all $t\in\R$, a.e. in $\Omega$. }\]
Therefore,  \begin{equation*}\lim_{t\to 0}\int_{\Omega}\frac{F(x,tu(x))}{t^p}dx=0.
\end{equation*}
By assumption \eqref{rho1} and  De l'Hospital rule 
\begin{equation*}
	\ds{\lim_{t\to 0}\frac{\hat{M}(t^p\|u\|^p)}{t^p\|u\|^p}>0},
\end{equation*}
and the first conclusion in $(i)$ follows.

By  $(\rho_2)$ and continuity of $M$, there exists positive constants $c_1, c_2$ such that  
\[\hat M(t)\geq c_1 t^{\frac{r}{p}} -c_2 \ \hbox{for all $t\geq 0$}. \]
Thus, from $(f_1)$, and possibly different constants $c_i$, 
\begin{align*}\psi_{\lambda,u}(t)&=\frac{1}{p}{\hat{M}(t^p\|u\|^p)}-\frac{t^{p^\star}}{p^\star}{\|u\|_{p^\star}^{p^\star}}-{\lambda}\int_{\Omega}{F(x,tu(x))}dx\\
&\geq \frac{1}{p}c_1 t^r\|u\|^r-\frac{t^{p^\star}}{p^\star}{\|u\|_{p^\star}^{p^\star}}-c_3t^q\|u\|_q^q -c_2
			\end{align*} 
and the first claim in $(ii)$ holds. 
\end{proof}

For each $u \in W_0^{1,p}(\Omega)\backslash\{0\}$, consider now the following system:
	\begin{equation}\label{systemlambdat}\left\{\begin{array}{l}
			\psi_{\lambda,u}(t)=0\\
			\psi'_{\lambda,u}(t)= 0\\
			\psi_{\lambda,u}(t)=\inf_{s > 0}\psi_{\lambda,u}(s).
		\end{array}\right.\end{equation}

	\begin{lem}\label{systemlambdatsolutionexistence} Assume conditions \eqref{f1}, \eqref{f2}, \eqref{rho1}, \eqref{rho2} and  \eqref{beta1}. Then, system \eqref{systemlambdat} has a solution $(\lambda_0(u),t_0(u))$ for each $u \in W_0^{1,p}(\Omega)\backslash\{0\}$. Furthermore, the solution is unique with respect to $\lambda$.
	\end{lem}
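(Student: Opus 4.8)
The plan is to produce $\lambda_0(u)$ as the extremal value of a one--dimensional quotient and to take $t_0(u)$ as a point where this value is attained. Fix $u\in W_0^{1,p}(\Omega)\setminus\{0\}$ and use that $\psi_{\lambda,u}$ is affine in $\lambda$, writing $\psi_{\lambda,u}(t)=g_u(t)-\lambda\,h_u(t)$ with
\[
g_u(t):=\frac1p\hat M\!\left(t^p\|u\|^p\right)-\frac{t^{p^\star}}{p^\star}\|u\|_{p^\star}^{p^\star},\qquad
h_u(t):=\int_\Omega F(x,tu(x))\,dx.
\]
Both functions are continuous on $[0,\infty)$ and $C^1$ on $(0,\infty)$ (as $\Phi_\lambda\in C^1$), and \eqref{f3} gives $F(x,s)>0$ for $s\neq 0$, so $h_u(t)>0$ for every $t>0$ since $u\not\equiv 0$, while \eqref{f1} keeps $h_u$ finite; hence the quotient $\varphi_u:=g_u/h_u$ is well defined and continuous on $(0,\infty)$.

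The heart of the argument is to check that $\varphi_u$ attains a positive minimum. Positivity of $\varphi_u$ comes from \eqref{beta1} and \eqref{azzz}: indeed
\[
g_u(t)\ \geq\ \frac{t^{p^\star}}{p^\star}\bigl(S\|u\|^{p^\star}-\|u\|_{p^\star}^{p^\star}\bigr)>0\qquad(t>0),
\]
the strict inequality because the supremum in \eqref{azzz} is not attained on a bounded domain. Coercivity of $\varphi_u$ at both endpoints of $(0,\infty)$ follows from the estimates already used in Lemma \ref{fibersfirstproperties}: as $t\to 0^+$, l'Hospital's rule and \eqref{rho1} give $g_u(t)/t^p\to \frac1p M(0^+)\|u\|^p>0$ while $h_u(t)=o(t^p)$, so $\varphi_u(t)\to+\infty$; as $t\to+\infty$, \eqref{rho2} and the continuity of $M$ yield $g_u(t)\geq c\,t^r$ for large $t$ with $r>p^\star$, whereas \eqref{f1} gives $h_u(t)\leq C\,t^q$ for $t\geq 1$ with $q<p^\star<r$, so again $\varphi_u(t)\to+\infty$. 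A continuous positive function on $(0,\infty)$ tending to $+\infty$ at both ends attains its infimum, so I may set
\[
\lambda_0(u):=\min_{t>0}\varphi_u(t)=\varphi_u\bigl(t_0(u)\bigr)>0
\]
for some $t_0(u)\in(0,\infty)$.

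It then remains to verify the three equations of \eqref{systemlambdat} at $(\lambda_0(u),t_0(u))$. The first holds by construction, $\psi_{\lambda_0,u}(t_0)=h_u(t_0)\bigl(\varphi_u(t_0)-\lambda_0(u)\bigr)=0$. Since $\lambda_0(u)\leq\varphi_u(s)$ for all $s>0$ and $h_u>0$, one has $\psi_{\lambda_0,u}(s)=h_u(s)\bigl(\varphi_u(s)-\lambda_0(u)\bigr)\geq 0$ on $(0,\infty)$, with equality at $t_0(u)$, so $\inf_{s>0}\psi_{\lambda_0,u}(s)=0=\psi_{\lambda_0,u}(t_0)$ --- the third equation --- and then $t_0(u)$ is an interior minimum of the $C^1$ function $\psi_{\lambda_0,u}$, whence $\psi'_{\lambda_0,u}(t_0)=0$, the second equation. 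For uniqueness in $\lambda$: if $(\lambda,t)$ solves \eqref{systemlambdat} then the first line gives $\lambda=\varphi_u(t)$, and the second and third lines force $\psi_{\lambda,u}\geq 0$ on $(0,\infty)$, i.e.\ $\lambda\leq\varphi_u(s)$ for every $s>0$; hence $\lambda=\min_{s>0}\varphi_u(s)=\lambda_0(u)$.

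The genuinely delicate points are the two asymptotics of $\varphi_u$ at $0$ and $+\infty$ and the strict sign of $g_u$; the asymptotics are essentially the computations already carried out in Lemma \ref{fibersfirstproperties}, and the strict positivity of $g_u$ is precisely where \eqref{beta1} (together with the non-attainment of $S$ on bounded domains) is used. Note that $t_0(u)$ is in general not unique, which is why the statement asserts uniqueness only with respect to $\lambda$.
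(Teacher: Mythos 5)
Your proof is correct, and it reaches the conclusion by a genuinely different route from the paper's. You minimize the nonlinear Rayleigh quotient $\varphi_u=g_u/h_u$ explicitly and set $\lambda_0(u)=\min_{t>0}\varphi_u(t)$, $t_0(u)$ a minimizer; the paper instead defines $\Lambda_u=\{\lambda\geq 0:\ \inf_{t\geq 0}\psi_{\lambda,u}(t)\geq 0\}$, sets $\lambda_0(u)=\sup\Lambda_u$, and then produces $t_0(u)$ as $\liminf_{\epsilon\to 0}t_0(\epsilon)$ over critical points of the perturbed fibers $\psi_{\lambda_0(u)+\epsilon,u}$ at negative levels. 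The two constructions are equivalent (indeed $\sup\Lambda_u=\inf_{t>0}\varphi_u(t)$), but yours is more economical: attainment of the minimum follows at once from continuity, positivity and divergence of $\varphi_u$ at $0$ and $+\infty$, and all three equations of \eqref{systemlambdat}, as well as uniqueness in $\lambda$, drop out of the single identity $\psi_{\lambda,u}=h_u(\varphi_u-\lambda)$, whereas the paper must argue separately that $\Lambda_u\neq\emptyset$ (via uniform convergence of $\psi_{\lambda_k,u}$ on a compact interval) and justify the limiting construction of $t_0(u)$. The essential ingredients coincide: \eqref{beta1} together with the non-attainment of $S$ on a bounded domain gives $g_u>0$ on $(0,+\infty)$, and \eqref{rho1}, \eqref{rho2}, \eqref{f1}, \eqref{f2} give the endpoint asymptotics, exactly as in Lemma \ref{fibersfirstproperties}. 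One caveat: your argument needs $h_u(t)>0$ for $t>0$, which you correctly trace to \eqref{f3}; that condition is not listed among the hypotheses of the lemma, but the paper's own proof relies on it equally (the monotonicity of $\lambda\mapsto\psi_{\lambda,u}(t)$, the boundedness of $\Lambda_u$, and the last line of the uniqueness argument all require $\int_\Omega F(x,tu)\,dx$ to be nonnegative and somewhere positive), so this is a defect of the statement rather than of your proof.
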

\begin{proof}
	We proceed by showing first that for each $u \in W_0^{1,p}(\Omega)\backslash\{0\}$, the set $$\Lambda_u:=\{\lambda\geq0 :  \  \ \inf_{t\geq 0}\psi_{\lambda,u}(t)\geq 0\}$$ is non empty.	
	Fix  $\bar{\lambda}>0$. By Lemma \ref{fibersfirstproperties}, there exist $\epsilon_1, \delta_1>0$ such that $\psi_{\bar{\lambda},u}>0$ in $(0,\epsilon_1)\cup (\delta_1,+\infty)$. 
	
	 For $\lambda\leq \bar{\lambda}$, since   $\psi_{\lambda,u}\geq \psi_{\bar{\lambda},u}$, the fiber $\psi_{\lambda,u}$ is positive over  $(0,\epsilon_1)\cup (\delta_1,+\infty)$.
	
	Take a positive monotone sequence $\{\lambda_k\}_{k \geq 1}$ converging to zero. The sequence of continuous real functions $\{\psi_{\lambda_k,u}\}_{ k \geq 1}$ converges uniformly to $\psi_{0,u}$ over the compact interval $[\epsilon_1,\delta_1]$. We observe that $\inf_{[\epsilon_1,\delta_1]}\psi_{0,u}>0$ as it follows  by condition \eqref{beta1} and by the fact that the constant $S$ in \eqref{azzz}
is not attained. Therefore, there exists a $k_0$ such that the fiber $\psi_{\lambda_k,u}$ is positive over the interval $[\epsilon_1,\delta_1]$ for all $k>k_0$. For $k$ large enough such that $\lambda_{k}< \bar{\lambda}$, we get that $\lambda_k \in \Lambda_u$. Also, $\Lambda_u$ is bounded from above, since for fixed $t_0>0$, $\lim_{\lambda\to\infty}\psi_{\lambda,u}(t_0)=-\infty$. 
	
	Now we define the canditate $\lambda_0(u):=\sup \Lambda_u$. For each positive $\epsilon\leq \epsilon_0$ (where $\epsilon_0$ is fixed), we  denote by $t_0(\epsilon)$, the first critical point of $\psi_{\lambda_0(u)+\epsilon,u}$ such that
\begin{equation}\label{olaa}
	\psi_{ \lambda_0(u)+\epsilon,u}(t_0(\epsilon))<0 \ .
\end{equation}
Since the function $\epsilon \mapsto\psi_{\lambda_0(u)+\epsilon,u}(t)$ is decreasing, the map $\epsilon \mapsto t_0(\epsilon)$ is bounded from below by the first root of the fiber $\psi_{\lambda_0(u)+\epsilon_0,u}$.  Define the candidate $t_0(u):=\liminf_{\epsilon \to 0}t_0(\epsilon)$. By taking the liminf on inequality \eqref{olaa} as $\epsilon$ goes to zero, we get $\psi_{\lambda_0(u),u}(t_0(u))\leq 0$. Since $\lambda_0(u)$ belongs to $\Lambda_u$ as well, there holds $\psi_{\lambda_0(u),u}(t)\geq0$ for all $t>0$, in particular for $t=t_0(u)$. Therefore, $\psi_{\lambda_0(u),u}(t_0(u))=0$

Let us now prove the uniqueness.

Assume that  the ordered pairs $(\lambda_0(u),t_0(u))$,$(\lambda^{'}_0(u),t^{'}_0(u))$ are both solutions of system \eqref{systemlambdat}. Assume without loss of generality that $\lambda_0^{'}(u)\geq\lambda_0(u)$. Then,  $0\leq\psi_{\lambda_0^{'}(u),u}(t_0(u))\leq\psi_{\lambda_0(u),u}(t_0(u))=0$, 
which implies $\lambda_0^{'}(u)=\lambda_0(u)$.
The proof is concluded.
\end{proof}

\begin{cor}\label{zerohomogeneous} For all $u \in W_0^{1,p}(\Omega)\backslash\{0\}$ and all $k \geq 0$, there holds $\lambda_0(ku)=\lambda_0(u)$.
\end{cor}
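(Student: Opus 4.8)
The plan is to exploit the fact that rescaling the base point $u$ by a positive factor $k$ merely reparametrizes the fiber function by a horizontal dilation, so every quantity entering the definition of $\lambda_0$ is left unchanged. Concretely, for $k>0$ (the case $k=0$ being vacuous, since then $ku\notin W_0^{1,p}(\Omega)\backslash\{0\}$) one has, directly from the definition $\psi_{\lambda,v}(t)=\Phi_\lambda(tv)$,
\[\psi_{\lambda,ku}(t)=\Phi_\lambda\big(t(ku)\big)=\Phi_\lambda\big((kt)u\big)=\psi_{\lambda,u}(kt)\qquad\text{for all }t\geq 0,\ \lambda\geq 0.\]
Thus $\psi_{\lambda,ku}$ is obtained from $\psi_{\lambda,u}$ by precomposition with the increasing bijection $t\mapsto kt$ of $[0,+\infty)$ onto itself.

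Next I would translate this identity into an equality of the admissible sets appearing in the proof of Lemma \ref{systemlambdatsolutionexistence}. Since $s=kt$ runs through all of $[0,+\infty)$ as $t$ does, for every $\lambda\geq 0$ we get
\[\inf_{t\geq 0}\psi_{\lambda,ku}(t)=\inf_{t\geq 0}\psi_{\lambda,u}(kt)=\inf_{s\geq 0}\psi_{\lambda,u}(s),\]
and hence
\[\Lambda_{ku}=\{\lambda\geq 0:\ \inf_{t\geq 0}\psi_{\lambda,ku}(t)\geq 0\}=\{\lambda\geq 0:\ \inf_{s\geq 0}\psi_{\lambda,u}(s)\geq 0\}=\Lambda_u.\]
Because $\lambda_0(\cdot)$ was defined in Lemma \ref{systemlambdatsolutionexistence} as $\lambda_0(v)=\sup\Lambda_v$, the equality $\Lambda_{ku}=\Lambda_u$ immediately yields $\lambda_0(ku)=\sup\Lambda_{ku}=\sup\Lambda_u=\lambda_0(u)$.

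I do not expect any genuine obstacle here: the statement is a direct consequence of the scaling identity for the fibers. The only point worth an explicit line of verification is that the dilation $t\mapsto kt$ also respects the third equation of system \eqref{systemlambdat} (the one selecting the point where the fiber attains its global minimum), which is clear from $\psi_{\lambda,ku}(t)=\psi_{\lambda,u}(kt)$; consequently the whole solution pair transforms as $(\lambda_0(ku),t_0(ku))=(\lambda_0(u),t_0(u)/k)$, and in particular its $\lambda$-component is scale invariant, as claimed.
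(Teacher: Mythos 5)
Your proof is correct and rests on the same key observation as the paper's, namely the scaling identity $\psi_{\lambda,ku}(t)=\psi_{\lambda,u}(kt)$. The only (minor) difference is in how the conclusion is drawn: the paper rewrites system \eqref{systemlambdat} for $ku$ as the system for $u$ evaluated at $kt_0(ku)$ and invokes the uniqueness of the $\lambda$-component from Lemma \ref{systemlambdatsolutionexistence}, whereas you use the identification $\lambda_0(v)=\sup\Lambda_v$ from the construction inside that lemma's proof and show $\Lambda_{ku}=\Lambda_u$; both are immediate consequences of the same identity, and your explicit dismissal of the case $k=0$ is, if anything, slightly more careful than the paper's.
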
		
\begin{proof} For $u \in W_0^{1,p}(\Omega)\backslash\{0\}$ and $k\geq 0$, there holds $ku \in W_0^{1,p}(\Omega)\backslash\{0\}$. System \eqref{systemlambdat} possesses a solution $(\lambda_0(ku),t_0(ku))$, where the first coordinate is unique:
	
	\begin{equation}\label{systemlambdathomogeneous}
		\left\{\begin{array}{l}
			\psi_{\lambda_0(ku),ku}(t_0(ku))=0\\
			\psi^{'}_{\lambda_0(ku),ku}(t_0(ku))= 0\\
			\psi_{\lambda_0(ku),ku}(t_0(ku))=\inf_{l > 0}\psi_{\lambda_0(ku),ku}(l).
		\end{array}\right.
	\end{equation}
System \eqref{systemlambdathomogeneous} may be rewritten as	
		\begin{equation*}\left\{\begin{array}{l}
			\psi_{\lambda_0(ku),u}(kt_0(ku))=0\\
			\psi^{'}_{\lambda_0(ku),u}(kt_0(ku))= 0\\
			\psi_{\lambda_0(ku),u}(kt_0(ku))=\inf_{l > 0}\psi_{\lambda_0(ku),u}(kl)=\inf_{l > 0}\psi_{\lambda_0(ku),u}(l).
		\end{array}\right.
	\end{equation*}
By uniqueness, we conclude that

	\begin{equation*}
	\lambda_0(ku)=	\lambda_0(u).
\end{equation*}\end{proof}
Now, we define an extremal parameter which will play an important role in our analysis:	
\begin{equation}\label{extremalparameterdefinition}
	\lambda^{\star}_0:=\inf_{u \in W_0^{1,p}(\Omega)\backslash\{0\}}\lambda_0(u).
\end{equation}

\begin{lem}\label{extremalparametersign} Assume conditions \eqref{f1}, \eqref{f2}, and \eqref{f3}. The following assertions hold true.
	\begin{itemize}
		\item[(i):] Under condition \eqref{beta1}, there holds $\lambda_0^\star\ge 0$.
		\item[(ii):] If the inequality in condition \eqref{beta1} is strict, then $\lambda_0^{\star}>0$ and viceversa. 
			\end{itemize}		
\end{lem}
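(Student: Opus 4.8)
The plan is to first turn $\lambda_0^\star$ into a single quotient and then read off the three claims from it. Since $\psi_{\lambda,u}(t)=\Phi_0(tu)-\lambda\int_\Omega F(x,tu)\d$ and, by \eqref{f3}, $F(x,s)>0$ for every $s\neq 0$ (so that $\int_\Omega F(x,tu)\d>0$ whenever $t>0$ and $u\neq 0$), the condition $\inf_{t\ge 0}\psi_{\lambda,u}(t)\ge 0$ defining $\Lambda_u$ is equivalent to $\lambda\le\Phi_0(tu)\big/\int_\Omega F(x,tu)\d$ for every $t>0$; hence, recalling Lemma~\ref{systemlambdatsolutionexistence} and \eqref{extremalparameterdefinition},
\[\lambda_0(u)=\inf_{t>0}\frac{\Phi_0(tu)}{\int_\Omega F(x,tu)\d}\qquad\text{and}\qquad\lambda_0^\star=\inf_{w\in W_0^{1,p}(\Omega)\setminus\{0\}}\frac{\Phi_0(w)}{\int_\Omega F(x,w)\d}.\]
Moreover \eqref{beta1} together with the definition \eqref{azzz} of $S$ gives, for every $w\neq 0$,
\[\Phi_0(w)=\tfrac1p\hat M(\|w\|^p)-\tfrac1{p^\star}\|w\|_{p^\star}^{p^\star}\ \ge\ \tfrac1{p^\star}\bigl(S\|w\|^{p^\star}-\|w\|_{p^\star}^{p^\star}\bigr)\ \ge\ 0,\]
so each quotient above is nonnegative, which is exactly assertion (i).

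For the forward implication of (ii), assume $m_0:=\inf_{t>0}\hat M(t)/t^{p^\star/p}>\tfrac{p}{p^\star}S$, set $\delta:=\tfrac1p m_0-\tfrac{S}{p^\star}>0$ and $\zeta(t):=\tfrac1p\hat M(t^p)-\tfrac{S}{p^\star}t^{p^\star}$, so that $\Phi_0(w)\ge\zeta(\|w\|)$ for all $w\neq 0$. From \eqref{f1}--\eqref{f2}, for each $\varepsilon>0$ there is $c_\varepsilon>0$ with $|F(x,s)|\le\varepsilon|s|^p+c_\varepsilon|s|^q$, whence $\int_\Omega F(x,w)\d\le\varepsilon C_p^p\|w\|^p+c_\varepsilon C_q^q\|w\|^q$, where $C_p,C_q$ are the embedding constants of $W_0^{1,p}(\Omega)$ into $L^p(\Omega)$ and $L^q(\Omega)$ (here $p,q<p^\star$). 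I would then show
\[\lambda_\star:=\inf_{t>0}\frac{\zeta(t)}{\varepsilon C_p^p\,t^p+c_\varepsilon C_q^q\,t^q}>0:\]
as $t\to 0^+$ the quotient tends to $\tfrac1{\varepsilon C_p^p}\cdot\tfrac1p\lim_{s\to 0^+}M(s)>0$ (by \eqref{rho1} and de l'Hospital, $\zeta(t)/t^p\to\tfrac1p\lim_{s\to 0^+}M(s)$); as $t\to+\infty$ it tends to $+\infty$ since \eqref{rho2} and continuity of $M$ give $\zeta(t)\ge\tfrac{c_1}{p}t^r-\tfrac{c_2}{p}$ with $r>p^\star>q$; and on every compact subset of $(0,\infty)$ it is continuous and positive because $\zeta(t)\ge\delta t^{p^\star}>0$. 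Then for $0<\lambda\le\lambda_\star$ one has $\Phi_0(w)\ge\zeta(\|w\|)\ge\lambda\int_\Omega F(x,w)\d$ for every $w\neq 0$, hence $\lambda_0^\star\ge\lambda>0$.

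For the converse I argue by contraposition: assume the infimum in \eqref{beta1} equals $\tfrac{p}{p^\star}S$ and show $\lambda_0^\star=0$. First, this infimum is attained at some $s_0\in(0,\infty)$, since by \eqref{rho1} it cannot be approached as $s\to 0^+$ (there $\hat M(s)/s^{p^\star/p}=(\hat M(s)/s)\,s^{1-p^\star/p}\to+\infty$) and by \eqref{rho2} it cannot be approached as $s\to+\infty$ (there $\hat M(s)/s^{p^\star/p}\ge c_1 s^{(r-p^\star)/p}-c_2 s^{-p^\star/p}\to+\infty$). Putting $t_0:=s_0^{1/p}$, so that $\tfrac1p\hat M(s_0)=\tfrac{S}{p^\star}s_0^{p^\star/p}$, we get for any $v$ with $\|v\|=1$
\[\Phi_0(t_0 v)=\frac{s_0^{p^\star/p}}{p^\star}\bigl(S-\|v\|_{p^\star}^{p^\star}\bigr),\qquad\text{hence}\qquad 0\le\lambda_0^\star\le\lambda_0(v)\le\frac{s_0^{p^\star/p}}{p^\star}\cdot\frac{S-\|v\|_{p^\star}^{p^\star}}{\int_\Omega F(x,t_0 v)\d}.\]

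It therefore suffices to produce a sequence $v_n$ with $\|v_n\|=1$ and $S-\|v_n\|_{p^\star}^{p^\star}=o\bigl(\int_\Omega F(x,t_0 v_n)\d\bigr)$. I would take the $v_n$ to be suitably normalised, cut-off Aubin--Talenti instantons concentrating at an interior point of $\Omega$, so that $\|v_n\|_{p^\star}^{p^\star}\to S$ — possible precisely because the constant $S$ in \eqref{azzz} is not attained — and estimate the perturbation term from below via \eqref{f3}: since $f(x,t)\ge\mu>0$ for a.e. $x\in\Omega$ and all $t$ in the interval $I$, one has $F(x,s)\ge\mu|I|$ for every $s\ge\sup I$, so $\int_\Omega F(x,t_0 v_n)\d\ge\mu|I|\cdot\bigl|\{x\in\Omega:\ t_0 v_n(x)\ge\sup I\}\bigr|$. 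The heart of the matter, and the step I expect to be the main obstacle, is the quantitative comparison between the Sobolev deficit $S-\|v_n\|_{p^\star}^{p^\star}$ and the measure of this super-level set: one has to calibrate the concentration parameter of the instantons (and refine the test functions if needed) so that the deficit is negligible with respect to that measure. Granting this estimate, the displayed bound yields $\lambda_0^\star\le\lim_n\tfrac{s_0^{p^\star/p}}{p^\star}\cdot\frac{S-\|v_n\|_{p^\star}^{p^\star}}{\int_\Omega F(x,t_0 v_n)\d}=0$, which together with (i) gives $\lambda_0^\star=0$; equivalently, $\lambda_0^\star>0$ forces the inequality in \eqref{beta1} to be strict.
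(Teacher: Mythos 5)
Your part (i) and the forward half of part (ii) are correct, and they take a somewhat different route from the paper. You recast $\lambda_0(u)$ as the nonlinear Rayleigh quotient $\inf_{t>0}\Phi_0(tu)/\int_\Omega F(x,tu)\,dx$ (legitimate, since \eqref{f3} makes the denominator strictly positive for $u\neq 0$, $t>0$) and then bound this quotient from below, either by $0$ via \eqref{beta1} and \eqref{azzz}, or by the uniform constant $\lambda_\star>0$ in the strict case. The paper instead argues both claims by contradiction along a sequence $u_k$ with $\|u_k\|=1$ and $\lambda_0(u_k)\to\lambda_0^\star$, extracts critical points $t_k$, shows $t_k\to 0$ and divides by $t_k^p$ to contradict \eqref{rho1}. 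Your version is more direct and dispenses with the sequence bookkeeping, at the price of a slightly more careful three-regime analysis of $\zeta(t)/D(t)$ near $0$, near $\infty$, and on compacta (which does go through under \eqref{rho1}--\eqref{rho2}, exactly the hypotheses the paper also uses tacitly via Lemma \ref{fibersfirstproperties}). Your observation that the infimum in \eqref{beta1} is attained at some $s_0$ in the equality case is a point the paper glosses over, and your justification of it is correct.

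The genuine gap is in the converse of (ii). You correctly reduce the problem to exhibiting $v_n$ with $\|v_n\|=1$ and $S-\|v_n\|_{p^\star}^{p^\star}=o\bigl(\int_\Omega F(x,t_0v_n)\,dx\bigr)$, and you correctly identify truncated Aubin--Talenti functions as the candidates, but you explicitly leave the decisive calibration unproved ("granting this estimate"). That estimate is the technical heart of the paper's proof and cannot be waved through: both the Sobolev deficit and the lower bound on the perturbation term are powers of the concentration parameter $\varepsilon$, and nothing a priori guarantees they compare the right way. Concretely, with $u_\varepsilon=v_\varepsilon/\|v_\varepsilon\|$ the Guedda--V\'eron expansions give $S-\|u_\varepsilon\|_{p^\star}^{p^\star}=O(\varepsilon^{N/p})$, while \eqref{f3} yields $F(x,s)\geq\beta$ for $s\geq\alpha$, and the super-level set $\{t_0u_\varepsilon\geq\alpha\}$ has measure of order $\varepsilon^{N(p-1)/p^2}$ (after the rescaling $|x|=\varepsilon^{(p-1)/p}s$ the admissible range is $s\lesssim\varepsilon^{-(p-1)^2/p^2}$), so that $\int_\Omega F(x,t_0u_\varepsilon)\,dx\geq c\,\varepsilon^{N(p-1)/p^2}$. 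Since $N/p=Np/p^2>N(p-1)/p^2$, the ratio in your displayed bound is $O(\varepsilon^{N/p^2})\to 0$ and the argument closes; without this exponent comparison your proof of the converse is incomplete.
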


	\begin{proof} $(i)$ Let us perform a proof by contradiction. Assume there exists a sequence $\{u_k\}_{k\geq 1} \in W_0^{1,p}(\Omega)\backslash\{0\}$ such that (we use the notation $\lambda_k:=\lambda_0(u_k)$)
		\begin{equation}\label{limitlambdak}
			-\infty\leq\lim_{k \to \infty}\lambda_k= \lambda_0^\star< 0  .
		\end{equation}
		We may assume by Corollary \eqref{zerohomogeneous} that $\|u_k\|=1$ for all $k \geq 1$. By the definition of $\lambda_0(u_k)$, there exists a sequence $\{t_k\}_{k \geq 1}$ of positive numbers (which is bounded, according to Lemma \eqref{fibersfirstproperties} item $(ii)$ ) 
		such that
		
		\begin{equation*}
			\frac{1}{p}\hat{M}(t_k^p\|u_k\|^p)-\frac{1}{p^{\star}}t_k^{p^\star}\|u_k\|^{p^\star}_{p^\star}=\lambda_k\int_{\Omega}F(x,t_ku_k)dx \ \ \forall \ k\geq 1.
		\end{equation*}
	By the Sobolev embbeding,
		\begin{equation}\label{inequalityt_k}
			\frac{1}{p}\hat{M}(t_k^p)-\frac{1}{p^{\star}}t_k^{p^\star}S\leq \lambda_k\int_{\Omega}F(x,t_ku_k)dx \ \ \forall \ k\geq 1.
		\end{equation}
The contradiction follows from \eqref{inequalityt_k} and \eqref{limitlambdak}. Therefore, there holds $\lambda_0^\star\geq0$.

$(ii)$ Let $L>S\frac{p}{p^\star}$ such that 
$\inf_{t > 0}\frac{\hat{M}(t)}{t^\frac{p^\star}{p}} \geq L.$  
Arguing as in the proof of item $(i)$, assume by contradiction that $\ds{\lim_{k\to +\infty}\lambda_k=\lambda_0^\star=0}$, where $\lambda_k=\lambda_0(u_k)$ with $\|u_k\|=1$. Thus, there exists a  sequence $\{t_k\}_{k\geq 1}$ of positive numbers such that 
	 	\begin{equation*}
\left(L-S\frac{p}{p^\star}\right)t_k^{p^\star}	\leq {\hat M(t_k^p})-S\frac{p}{p^\star}t_k^{p^\star}	\le \lambda_kp\int_\Omega{F(x,t_k u_k)}dx.
	\end{equation*}
	
The right hand side tends to zero since $\lambda_k\to 0$ and $\left\{\int_\Omega F(x, t_k u_k) dx\right\}$ is bounded due to  the growth of $F$, and the fact that $\{t_k\}$ and $\{u_k\}$ are bounded. This implies that $\ds\lim_{k\to +\infty}t_k=0$.  Dividing the previous inequality by $t_k^p$, we get 
	\begin{equation*}
	\frac{\hat M(t_k^p)}{t_k^p}-S\frac{p}{p^\star}t_k^{p^\star-p}\le {\lambda_k}p\int_\Omega\frac{F(x,t_k u_k)}{t_k^p}dx,
	\end{equation*} which contradicts assumption \eqref{rho1} when passing to the limit  as $k\to\infty$.
Therefore, $\lambda_0^\star>0$.

\smallskip

Let us prove the viceversa. Assume that condition \eqref{beta1} holds with equality. We will prove that $\lambda_0^\star=0$. 
Without loss of generality we may assume that $0\in \Omega$. Fix  a ball of radius $r>0$ such that $B_{2r}(0)\subset \Omega$ and let $\varphi$ a function in $C^\infty_0(B_{2r}(0))$ such that $\varphi(x)=1$ in $B_{r}(0)$, $0\leq \varphi\leq 1$ and $|\nabla \varphi|\leq 2$.

Put 
\begin{equation*}\label{uepsilon}
v_\varepsilon(x)=\frac{\varphi(x)}{\left(\varepsilon+|x|^\frac{p}{p-1}\right)^{\frac{N-p}{p}}}  \qquad 
\hbox{and} \qquad
u_\varepsilon(x)=\frac{v_\varepsilon(x)}{\|v_\varepsilon\|}.
\end{equation*}

By \cite{GV} (see also \cite{BN}), there exists a constant $K=K(N,p)$ such that 

$$\ds\|v_\varepsilon\|^p=K\varepsilon^{-\frac{N-p}{p}}+O(1); \qquad \ds\|v_\varepsilon\|_{p^\star}^p=KS^{\frac{p}{p^\star}} \varepsilon^{-\frac{N-p}{p}}+O(\varepsilon); $$
 $$\ds\|u_\varepsilon\|=1; \qquad \ds\|u_\varepsilon\|_{p^\star}^{p^\star}=S+O(\varepsilon^{\frac{N}{p}}).$$
 We deduce in particular that $$\ds \|v_\varepsilon\|= K^{\frac{1}{p}}\varepsilon^{-\frac{N-p}{p^2}}+ O(\varepsilon^{\frac{(N-p)(p-1)}{p^2}}).$$
 
Let $t_0>0$ such that  $$\inf_{t > 0}\frac{\hat{M}(t)}{t^\frac{p^\star}{p}}=\frac{\hat{M}(t_0^p)}{t_0^{p^\star}}=S\frac{p}{p^\star}.$$ 
Then, \begin{eqnarray*}
	\psi_{\lambda,u_\varepsilon}(t_0)&= &	\frac{1}{p}\hat{M}(t_0^p)-\frac{1}{p^{\star}}t_0^{p^\star}\|u_\varepsilon\|^{p^\star}_{p^\star}-\lambda\int_{\Omega}F(x,t_0u_\varepsilon)dx \\
	&=&-\frac{1}{p^\star}t_0^{p^\star}O(\varepsilon^{\frac{N}{p}})-\lambda \int_\Omega F(x,t_0u_\varepsilon)dx.
	\end{eqnarray*}
Let us estimate $\ds\int_\Omega F(x,t_0u_\varepsilon)dx$ from below.
 By assumption \eqref{f3}, one has that $f(x,s)\geq \mu \chi_I(s)$ (being $\chi_I$ the characteristic function of the interval $I$), so there exist $\alpha, \beta >0$ such that $F(x,s)\geq \tilde F(s):=\mu\int_0^s \chi_I(t) dt \geq \beta $ for every $s\geq \alpha$.  Following Corollary 2.1 of \cite{BN} and using the positivity and  monotonicity of $F$, 
\begin{align*}
\int_{\Omega}F(x,t_0 u_\varepsilon)dx&\geq \int_{|x|\leq r}F(x,t_0 u_\varepsilon) dx \geq \int_{|x|\leq r}F\left(x, \frac{t_0}{\|v_\varepsilon\|(\varepsilon+|x|^\frac{p}{p-1})^{\frac{N-p}{p}}}\right)dx \\&\geq \int_{|x|\leq r}\tilde F\left( \frac{t_0}{\|v_\varepsilon\|(\varepsilon+|x|^\frac{p}{p-1})^{\frac{N-p}{p}}}\right)dx \\&=c_1\varepsilon^\frac{N(p-1)}{p}\int_0^{r\varepsilon^{-\frac{p-1}{p}} }\tilde F\left( \frac{t_0}{\|v_\varepsilon\|}\left(\frac{\varepsilon^{-1}}{1+s^{\frac{p}{p-1}}}\right)^{\frac{N-p}{p}}\right)s^{N-1}ds
\end{align*}
One has that 
\begin{equation}\label{MM}
\tilde{F}\left( \frac{t_0}{\|v_\varepsilon\|}\left(\frac{\varepsilon^{-1}}{1+s^{\frac{p}{p-1}}}\right)^{\frac{N-p}{p}}\right)\geq \beta \hbox{ if $s$ is such that} \  \frac{t_0}{\|v_\varepsilon\|}\left(\frac{\varepsilon^{-1}}{1+s^{\frac{p}{p-1}}}\right)^{\frac{N-p}{p}}\geq\alpha.
\end{equation}
Notice that the second inequality of \eqref{MM} is equivalent to 
\begin{equation*}
\frac{t_0\varepsilon^{-\frac{(N-p)(p-1)}{p^2}}}{(K^{\frac{1}{p}}+O(\varepsilon^{\frac{N-p}{p}}))(1+s^{\frac{p}{p-1}})^{\frac{N-p}{p}}}\ge \alpha.
\end{equation*}
 Now, fix $c_2<\frac{t_0}{\alpha}$. If $s\leq c_2 \varepsilon^{-\frac{(p-1)^2}{p^2}}$, for $\varepsilon$ small enough, the above inequality holds true.   
 This implies that for an eventually smaller $r$,
 \begin{align*}
\int_{\Omega}F(x,t_0 u_\varepsilon)dx\geq c_3 \varepsilon^\frac{N(p-1)}{p}\int_0^{r\varepsilon^{-\frac{(p-1)^2}{p^2}} }\beta s^{N-1}ds =c_4\varepsilon^{\frac{N(p-1)}{p^2}},
\end{align*} for some positive constant $c_4$.
Hence, 
$$\psi_{\lambda,u_\varepsilon}(t_0)\leq -\frac{1}{p^\star}t_0^{p^\star}O(\varepsilon^{\frac{N}{p}})-\lambda c_4\varepsilon^{\frac{N(p-1)}{p^2}}=
\varepsilon^{\frac{N(p-1)}{p^2}}\left(O(\varepsilon^{\frac{N}{p^2}})-\lambda c_4\right)<0,
$$ for small $\varepsilon>0$. We  deduce that $\lambda_0(u_\varepsilon)<\lambda$ and because of the arbitrariness of $\lambda,$ $\lambda_0^\star=0$.
	\end{proof}

Now we  prove a continuity result, which will be useful for proving the existence of a minimizer of our problem. The proof we present here was inspired by \cite[Theorem 2.1]{EM}.
	\begin{lem}\label{semicontinuidade}Assume conditions \eqref{beta1} and \eqref{beta2}. Then, for all $\{u_k\}_{k\geq 1}\subset W_0^{1,p}(\Omega)$ such that $u_k \rightharpoonup u \in W_0^{1,p}(\Omega)$, and $\{\lambda_k\}_{k \geq 1}\subset \R$ such that $\lambda_k \rightarrow \lambda \in \R$,
		\begin{equation*}
			\Phi_\lambda(u) \leq \liminf_{k \to \infty}\Phi_{\lambda_k}(u_k).
		\end{equation*}	
	\end{lem}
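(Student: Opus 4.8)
The plan is to split the energy functional into the parts that are automatically weakly lower semicontinuous and the genuinely problematic critical term, and then exploit the Brezis--Lieb lemma together with the Second Concentration--Compactness principle of Lions to control the loss of mass at concentration points. Write
\[
\Phi_{\lambda_k}(u_k)=\frac{1}{p}\hat M(\|u_k\|^p)-\frac{1}{p^\star}\|u_k\|_{p^\star}^{p^\star}-\lambda_k\int_\Omega F(x,u_k)\,dx .
\]
The perturbation term is harmless: by \eqref{f1}, $q<p^\star$, so $u_k\to u$ strongly in $L^q(\Omega)$ by Rellich--Kondrachov, and since $\lambda_k\to\lambda$ we get $\lambda_k\int_\Omega F(x,u_k)\,dx\to\lambda\int_\Omega F(x,u)\,dx$ (dominated convergence after passing to an a.e.\ convergent subsequence). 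So the whole difficulty is to show
\[
\frac{1}{p}\hat M(\|u\|^p)-\frac{1}{p^\star}\|u\|_{p^\star}^{p^\star}\;\le\;\liminf_{k\to\infty}\left(\frac{1}{p}\hat M(\|u_k\|^p)-\frac{1}{p^\star}\|u_k\|_{p^\star}^{p^\star}\right).
\]

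First I would apply the Concentration--Compactness principle to the bounded sequences $|\nabla u_k|^p$ and $|u_k|^{p^\star}$: up to a subsequence, $|\nabla u_k|^p\rightharpoonup \mu\ge |\nabla u|^p+\sum_j \mu_j\delta_{x_j}$ and $|u_k|^{p^\star}\rightharpoonup\nu=|u|^{p^\star}+\sum_j\nu_j\delta_{x_j}$ as measures, with the reverse Sobolev relation $\mu_j\ge S^{-p/p^\star}\nu_j^{p/p^\star}$ at each atom (in the normalization of \eqref{azzz}, $\nu_j\le S\,\mu_j^{p^\star/p}$). Testing $\mu$ and $\nu$ against a cutoff at each $x_j$ gives $\|u_k\|^p\to \|u\|^p+\sum_j\mu_j+\mu_\infty$ (including a possible atom at infinity, but $\Omega$ bounded lets us ignore it, or absorb it the same way) and $\|u_k\|_{p^\star}^{p^\star}\to\|u\|_{p^\star}^{p^\star}+\sum_j\nu_j$. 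Now use the superadditivity \eqref{beta2}: since $\hat M$ is superadditive and nondecreasing,
\[
\hat M(\|u_k\|^p)\;\ge\;\hat M\!\Big(\|u\|^p+\textstyle\sum_j\mu_j\Big)\;\ge\;\hat M(\|u\|^p)+\textstyle\sum_j\hat M(\mu_j)\,+\,o(1),
\]
the first inequality being valid in the limit by monotonicity since $\|u\|^p+\sum_j\mu_j\le \lim\|u_k\|^p$. (Here one handles finitely many atoms first and passes to the countable sum by monotone convergence.) Hence it suffices to show, for each atom,
\[
\frac{1}{p}\hat M(\mu_j)\;\ge\;\frac{1}{p^\star}\nu_j ,
\]
which follows from \eqref{beta1}: $\hat M(\mu_j)\ge \frac{p}{p^\star}S\,\mu_j^{p^\star/p}\ge \frac{p}{p^\star}\,\nu_j$ by the reverse Sobolev inequality $\nu_j\le S\mu_j^{p^\star/p}$. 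Summing over $j$ and combining with the trivial splitting of the $\|u\|_{p^\star}^{p^\star}$ term yields the desired inequality.

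The main obstacle I anticipate is the interplay between the nonlinear, \emph{nonconvex} function $\hat M$ and the decomposition of masses: one cannot simply say $\hat M(\lim\|u_k\|^p)\ge\hat M(\|u\|^p)+\sum\hat M(\mu_j)$ termwise without invoking superadditivity carefully, and one must be sure the leftover diffuse part of $\mu$ beyond $|\nabla u|^p$ and the atoms only helps (it does, by monotonicity of $\hat M$). A secondary technical point is justifying that $\|u_k\|^p$ actually converges (not merely has a $\liminf$) along the chosen subsequence, and that the concentration analysis for the \emph{gradient} measure $\mu$ — as opposed to $|u_k|^{p^\star}$ — is available; this is exactly the content of Lions's second principle for $W^{1,p}_0$, and the crucial structural input is that $S$ in \eqref{azzz} is not attained, so there is no diffuse concentration away from atoms that could spoil the estimate. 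Once these points are in place, passing from finitely many atoms to the full (at most countable) family by monotone convergence closes the argument.
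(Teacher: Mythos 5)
Your proposal is correct and follows essentially the same route as the paper's proof: apply Lions's second concentration--compactness principle to $|\nabla u_k|^p$ and $|u_k|^{p^\star}$, use the superadditivity \eqref{beta2} to split $\hat M$ over the weak limit plus the atomic masses, and then use \eqref{beta1} together with $\nu_j\le S\mu_j^{p^\star/p}$ to absorb each atom, the subcritical perturbation passing to the limit by Rellich--Kondrachov. Your extra care about extracting a subsequence along which $\|u_k\|^p$ converges and about the countable sum of atoms only tidies up a point the paper glosses over; the aside about non-attainment of $S$ is not actually needed here.
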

	\begin{proof} Let $\{u_k\}_{k \geq 1} \subset W_0^{1,p}(\Omega)$ be such that $u_k \rightharpoonup u \in W_0^{1,p}(\Omega)$. Let's call $\liminf_{k \to \infty}\Phi_{\lambda_k}(u_k)=L$. By the second  Concentration-Compactness Lemma of Lions, there exist an at most countable index set $J$,
	 		a set of points $\{x_{j}\}_{j\in J}\subset\overline\Omega$ and two families of positive
	 		numbers $\{\eta_{j}\}_{j\in J}$, $\{\nu_{j}\}_{j\in J}$ such that
	 		\begin{align*}
	 		|\nabla u_{k}|^{p} & \rightharpoonup d\eta\geq|\nabla u|^{p}+\sum_{j\in J}\mu_{j}\delta_{x_{j}},\\
	 		|u_{k}|^{p^*} & \rightharpoonup d\nu=|u|^{p^*}+\sum_{j\in J}\nu_{j}\delta_{x_{j}},
	 		\end{align*}
	 		(weak star convergence in the sense of measures), where $\delta_{x_{j}}$ is the Dirac mass concentrated at
	 		$x_{j}$ and such that
	 		$$		S^{-\frac{p}{p^\star}}  \nu_{j}^{\frac{p}{p^*}}\leq\mu_{j} \qquad \mbox{for every $j\in J$}.$$
			Thus, we deduce 				
		\begin{eqnarray}\label{CC}
				L&=&\frac{1}{p}\hat{M}\left(\liminf_{k \to \infty}\int_{\Omega}|\nabla u_k|^pdx\right)-\frac{1}{p^\star}\left(\liminf_{k \to \infty}\int_{\Omega}|u_k|^{p^\star}dx \right)\nonumber \\
				&&-\liminf_{k \to \infty}\lambda_k\int_{\Omega}F(x,u_k(x))dx \nonumber\\
				&\overset{\eqref{beta2}}{\geq}& \frac{1}{p}\hat{M}\left(\int_{\Omega}|\nabla u|^pdx+\sum_{j \in J}\mu_j\right)-\frac{1}{p^\star}\left(\int_{\Omega}|u|^{p^\star}dx+\sum_{j \in J}\nu_j\right)-\lambda\int_{\Omega}F(x,u(x))dx \nonumber\\
				&\overset{\eqref{beta2}}{\geq}&\Phi_\lambda(u)+\frac{1}{p}\sum_{j \in J}\hat{M}\left(\mu_j\right)-\frac{1}{p^\star}\sum_{j \in J}\nu_j\nonumber\\
			&\overset{\eqref{beta1}}{\geq}& \Phi_\lambda(u)+\frac{S}{p^\star}\sum_{j \in J}\mu_j^{\frac{p^\star}{p}}-\frac{1}{p^\star}\sum_{j \in J}\nu_j\\
			&=& \Phi_\lambda(u).\nonumber
		\end{eqnarray}
	\end{proof}

	\begin{cor}\label{corolariosemicontinuidade}
		Assume that condition \eqref{beta2} holds true and the inequality in condition \eqref{beta1} is strict. Let  $\{u_k\}_{k \geq 1} \subset W_0^{1,p}(\Omega)$ be a sequence such that $u_k\rightharpoonup u \in W_0^{1,p}(\Omega)$ and  $\ds{\Phi_\lambda(u)=\lim_{k \to \infty}\Phi_{\lambda_k}(u_k)}$. Then, $u_k \rightarrow u \in W_0^{1,p}(\Omega)$.	
	\end{cor}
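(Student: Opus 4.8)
The plan is to go back to the chain of inequalities \eqref{CC} established in the proof of Lemma \ref{semicontinuidade} and to observe that, under the present hypotheses, it collapses to a chain of equalities. More precisely, it suffices to show that every subsequence of $\{u_k\}_{k\ge1}$ has a further subsequence converging strongly to $u$, so I fix a subsequence and (relabelling) apply the second Concentration--Compactness Lemma of Lions to extract, along a further subsequence, the at most countable set $J$, the points $x_j\in\overline\Omega$ and the nonnegative numbers $\mu_j,\nu_j$ with $|\nabla u_k|^p\rightharpoonup d\eta\ge|\nabla u|^p+\sum_j\mu_j\delta_{x_j}$, $|u_k|^{p^\star}\rightharpoonup d\nu=|u|^{p^\star}+\sum_j\nu_j\delta_{x_j}$ and $S^{-p/p^\star}\nu_j^{p/p^\star}\le\mu_j$. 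Since the limit $\lim_k\Phi_{\lambda_k}(u_k)$ exists and equals $\Phi_\lambda(u)$, the number $L$ appearing in \eqref{CC} (which is this liminf) equals $\Phi_\lambda(u)$, and hence all the inequalities in \eqref{CC} must be equalities.

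The key step is to exploit the \emph{strict} inequality in \eqref{beta1}. Set $c:=\inf_{t>0}\hat M(t)/t^{p^\star/p}$, so that $c>Sp/p^\star$ by hypothesis. If some $\mu_j>0$, then $\tfrac1p\hat M(\mu_j)\ge\tfrac cp\,\mu_j^{p^\star/p}>\tfrac{S}{p^\star}\mu_j^{p^\star/p}$, which makes the passage from the fourth to the fifth line of \eqref{CC} strict --- contradicting the fact that \eqref{CC} is a chain of equalities. Hence $\mu_j=0$ for all $j\in J$, and then $\nu_j\le S\mu_j^{p^\star/p}=0$ forces $\nu_j=0$ for all $j$ as well. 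Thus $d\nu=|u|^{p^\star}$, and testing the weak-star convergence $|u_k|^{p^\star}\rightharpoonup d\nu$ against the constant function $1\in C(\overline\Omega)$ --- here it is essential that $\Omega$ is bounded, so that $\overline\Omega$ is compact and no mass escapes --- yields $\|u_k\|_{p^\star}^{p^\star}\to\|u\|_{p^\star}^{p^\star}$.

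Next I would pass to the limit in the energy identity $\tfrac1p\hat M(\|u_k\|^p)=\Phi_{\lambda_k}(u_k)+\tfrac1{p^\star}\|u_k\|_{p^\star}^{p^\star}+\lambda_k\int_\Omega F(x,u_k)\,dx$. By Rellich--Kondrachov, $u_k\to u$ in $L^p(\Omega)$ and in $L^q(\Omega)$ (recall $q<p^\star$ from \eqref{f1}), and the subcritical growth bound $|F(x,t)|\le\varepsilon|t|^p+c|t|^q$ together with the generalized dominated convergence theorem gives $\int_\Omega F(x,u_k)\,dx\to\int_\Omega F(x,u)\,dx$; note that only \emph{subcritical} information is used here, so the failure of compactness of $W_0^{1,p}(\Omega)\hookrightarrow L^{p^\star}(\Omega)$ is not an issue. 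Combining this with $\lambda_k\to\lambda$, with $\|u_k\|_{p^\star}^{p^\star}\to\|u\|_{p^\star}^{p^\star}$ and with $\Phi_{\lambda_k}(u_k)\to\Phi_\lambda(u)$, one gets $\hat M(\|u_k\|^p)\to\hat M(\|u\|^p)$. Since \eqref{beta1} and \eqref{beta2} make $\hat M$ continuous and strictly increasing on $[0,+\infty[$, it is a homeomorphism onto its range, whence $\|u_k\|^p\to\|u\|^p$; finally, $W_0^{1,p}(\Omega)$ is uniformly convex for $1<p<+\infty$ (the gradient map identifies it with a closed subspace of $L^p(\Omega;\R^N)$), so the Radon--Riesz property together with $u_k\rightharpoonup u$ gives $u_k\to u$ strongly in $W_0^{1,p}(\Omega)$. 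Undoing the subsequence reduction completes the argument.

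The main obstacle is recognizing that the \emph{strict} form of \eqref{beta1} is precisely what eliminates every concentration atom; with only equality in \eqref{beta1} one cannot exclude genuine atoms (and Lemma \ref{extremalparametersign}(ii) indicates that this borderline case is essentially sharp), which is why the corollary is stated with a strict inequality. The remaining points to watch are purely technical: the use of boundedness of $\Omega$ for the mass conservation of $|u_k|^{p^\star}$, the fact that the Concentration--Compactness Lemma applies only along subsequences (handled by the ``every subsequence has a further subsequence'' device), and checking that the convergence of $\int_\Omega F(x,u_k)\,dx$ needs only the compact subcritical embeddings.
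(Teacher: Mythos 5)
Your proposal is correct and follows essentially the same route as the paper: the strict form of \eqref{beta1} forces the chain \eqref{CC} to be strict whenever a concentration atom is present, which is incompatible with the assumed equality $L=\Phi_\lambda(u)$, so $J=\emptyset$, $\|u_k\|_{p^\star}\to\|u\|_{p^\star}$, and then $\hat M(\|u_k\|^p)\to\hat M(\|u\|^p)$ combined with the strict monotonicity of $\hat M$ and uniform convexity yields strong convergence. You merely make explicit several steps the paper leaves implicit (the subsequence device, the convergence of $\int_\Omega F(x,u_k)\,dx$ via the subcritical growth, and the Radon--Riesz argument).
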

\begin{proof}
Arguing as in the proof of  Lemma \eqref{semicontinuidade} we deduce that inequality in \eqref{CC} is strict, that is 
$L>\Phi_{\lambda}(u)$ which contradicts our assumption. Hence $J$ must be empty, that is
	 		\[\lim_{k\to\infty}\int_{\Omega}|u_k|^{p^\star} dx= \int_{\Omega}|u|^{p^\star} dx\]
	 		and the uniform convexity of $L^{p^\star}(\Omega)$ implies that 		$
	 		u_{k}\to u\mbox{ in }L^{p^\star}(\Omega).$
By the fact that $\Phi_{\lambda_k}(u_k)\to \Phi_\lambda(u)$, it follows that $\hat M(\|u_k\|^p)\to \hat M(\|u\|^p)$ which ensures  our claim because of the strict monotonicity of $\hat M$.
\end{proof}

	\section{Existence Results}
In this section, we study the existence of global and local minimizers for the energy functional, as well as Mountain Pass solutions. The existence of minimizers will be guaranteed by  conditions \eqref{beta1} and \eqref{beta2}, while the existence of a Mountain Pass solution will be achieved under assumption  \eqref{beta3}.	 Throughout the sequel we will always assume \eqref{rho1} and \eqref{rho2}.

	\subsection{Global Minimizers}
	First we look for global minimizers. Consider the problem
		\begin{equation}\label{minimizationproblem}
		I_\lambda:=\inf \left\{\Phi_\lambda(u) \ : \ u \in  W_0^{1,p}(\Omega) \right\}
	\end{equation}

	\begin{pro}\label{existenceglobalminimizer}
		The infimum in problem \eqref{minimizationproblem} is attained by some $u_\lambda\in W_0^{1,p}(\Omega)$, under conditions \eqref{beta1} and \eqref{beta2}. If $\lambda>\lambda_0^\star$, then $I_\lambda<0$ and $u_\lambda\neq 0$. If $\lambda< \lambda_0^\star$, then $I_\lambda=0$ and $u_\lambda=0$. 	\end{pro}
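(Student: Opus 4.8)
The plan is to establish the four claims of Proposition~\ref{existenceglobalminimizer} in the following order: coercivity of $\Phi_\lambda$, attainment of the infimum via the direct method, the sign of $I_\lambda$ above $\lambda_0^\star$, and finally the trivial behavior below $\lambda_0^\star$.

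First I would show that $\Phi_\lambda$ is coercive on $W_0^{1,p}(\Omega)$. Using the estimate $\hat M(t)\geq c_1 t^{r/p}-c_2$ with $r>p^\star$ coming from \eqref{rho2} and the continuity of $M$ (exactly as in the proof of Lemma~\ref{fibersfirstproperties}(ii)), together with the Sobolev embedding $\|u\|_{p^\star}^{p^\star}\leq S\|u\|^{p^\star}$ and the subcritical growth $|F(x,t)|\leq \varepsilon|t|^p+c|t|^q$ with $q<p^\star$ from \eqref{f1}--\eqref{f2}, one gets
\[
\Phi_\lambda(u)\geq \frac{c_1}{p}\|u\|^r-\frac{S}{p^\star}\|u\|^{p^\star}-C_\lambda\|u\|^q-C'_\lambda-\frac{c_2}{p},
\]
and since $r>p^\star>q>p$, the right-hand side tends to $+\infty$ as $\|u\|\to\infty$; in particular $I_\lambda>-\infty$. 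Next, take a minimizing sequence $\{u_k\}$; by coercivity it is bounded in the reflexive space $W_0^{1,p}(\Omega)$, so up to a subsequence $u_k\rightharpoonup u_\lambda$. Applying Lemma~\ref{semicontinuidade} with $\lambda_k\equiv\lambda$ gives $\Phi_\lambda(u_\lambda)\leq \liminf_k\Phi_\lambda(u_k)=I_\lambda$, hence $\Phi_\lambda(u_\lambda)=I_\lambda$ and the infimum is attained.

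For the sign analysis, suppose $\lambda>\lambda_0^\star$. By the definition \eqref{extremalparameterdefinition} of $\lambda_0^\star$ as an infimum, there exists $v\in W_0^{1,p}(\Omega)\setminus\{0\}$ with $\lambda_0(v)<\lambda$. Since $(\lambda_0(v),t_0(v))$ solves system \eqref{systemlambdat}, we have $\psi_{\lambda_0(v),v}(t_0(v))=0$; because $\lambda\mapsto\psi_{\lambda,v}(t)$ is strictly decreasing (as $\int_\Omega F(x,tv)\,dx>0$ for $t>0$ by \eqref{f3}) and $\lambda>\lambda_0(v)$, we get $\Phi_\lambda(t_0(v)v)=\psi_{\lambda,v}(t_0(v))<\psi_{\lambda_0(v),v}(t_0(v))=0$, so $I_\lambda<0$; in particular $u_\lambda\neq 0$ since $\Phi_\lambda(0)=\frac1p\hat M(0)=0$. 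Conversely, if $\lambda<\lambda_0^\star$, then for every $u\in W_0^{1,p}(\Omega)\setminus\{0\}$ one has $\lambda<\lambda_0^\star\leq\lambda_0(u)$, hence $u/\|u\|$ (or rather the homogeneity from Corollary~\ref{zerohomogeneous}) together with the fact that $\lambda_0(u)\in\Lambda_u$ gives $\inf_{t\geq0}\psi_{\lambda_0(u),u}(t)\geq 0$, and since $\psi_{\lambda,u}\geq\psi_{\lambda_0(u),u}$ pointwise (again strict monotonicity in $\lambda$, using $\lambda<\lambda_0(u)$), we obtain $\Phi_\lambda(tu)=\psi_{\lambda,u}(t)>0$ for all $t>0$; taking $t=1$ shows $\Phi_\lambda(u)>0$ for all $u\neq0$. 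Combined with $\Phi_\lambda(0)=0$, this forces $I_\lambda=0$ and $u_\lambda=0$ as the unique minimizer.

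The main obstacle is the sign argument rather than the attainment: one must carefully exploit that $\lambda_0^\star$ is defined as an infimum over all nonzero $u$ (so $\lambda>\lambda_0^\star$ only guarantees the existence of \emph{some} good test function $v$, not control over all of them), and that strict monotonicity of $\lambda\mapsto\psi_{\lambda,u}(t)$ at a fixed positive $t$ hinges on the strict positivity $\int_\Omega F(x,tu)\,dx>0$, which is precisely where hypothesis \eqref{f3} enters. A minor point to handle with care is that when $\lambda<\lambda_0^\star$ the membership $\lambda_0(u)\in\Lambda_u$ only yields $\psi_{\lambda_0(u),u}\geq 0$ with possible equality at some $t$; strict positivity of $\Phi_\lambda(u)$ then comes from the strict drop when passing from $\lambda_0(u)$ down to the smaller value $\lambda$.
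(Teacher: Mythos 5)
Your proposal is correct and follows essentially the same route as the paper: coercivity from \eqref{rho2} plus the Sobolev embedding and the subcritical growth of $F$, attainment via the direct method using Lemma \ref{semicontinuidade} with $\lambda_k\equiv\lambda$, and the sign analysis by testing $\psi_{\lambda,u}$ at $t_0(u)$ (for $\lambda>\lambda_0^\star$) and at $t=1$ (for $\lambda<\lambda_0^\star$), with strict monotonicity of $\lambda\mapsto\psi_{\lambda,u}(t)$ supplied by \eqref{f3}. Your explicit remarks on where strictness enters are a welcome clarification of steps the paper leaves implicit.
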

	\begin{proof}
		Condition \eqref{rho2}, combined with the following inequality, gives us the coercivity of the energy functional:
		
		\begin{equation*}
			\begin{array}{lcl}
			\Phi_\lambda(u)	&\geq&\ds{\frac{1}{p}\hat{M}(\| u \|^p)-\frac{S}{p^\star}\| u \|^{p^\star}-\lambda\int_{\Omega}F(x,u(x))dx}\\
				&=&\ds{\|u\|^r\left[\frac{1}{p}\frac{\hat{M}(\|u\|^p)}{\left(\|u\|^p\right)^{\frac{r}{p}}}-\frac{S}{p^\star}\|u\|^{p^\star-r}-\lambda\int_{\Omega}\frac{F(x,u(x))}{\|u\|^r}dx\right]}.
			\end{array}
		\end{equation*}
	Conditions \eqref{beta1} and \eqref{beta2} give us the sequential weak lower semi continuity of the energy functional, as proven in Lemma \eqref{semicontinuidade}. Therefore, $I_\lambda$ is reached.
	In order to analyse the sign of $I_\lambda$, we resort to system \eqref{systemlambdat} and Definition \eqref{extremalparameterdefinition}.

	If $0 \leq \lambda < \lambda_0^\star$, then for all $u \in W_0^{1,p}(\Omega)\backslash\{0\}$, $\lambda< \lambda_0(u)$ and so $0=\psi_{\lambda_0(u),u}(t_0(u))\leq \psi_{\lambda_0(u),u}(1)< \psi_{\lambda,u}(1)=\Phi_\lambda(u)$. Since $\Phi_\lambda(0)=0$, we are done.
	
	If $\lambda > \lambda_0^\star$,  there exists $u \in W_0^{1,p}(\Omega)\backslash \{0\}$ such that $\lambda > \lambda_0(u)$. And thus $0=\psi_{\lambda_0(u),u}(t_0(u))>\psi_{\lambda,u}(t_0(u))=\Phi_\lambda(t_0(u)u)$.
	\end{proof}


\begin{pro}\label{existencesolutioncriticalcasenonzero}
	Assume condition \eqref{beta1} with strict inequality. Then, there exists $u_{\lambda_0^\star}\in W_0^{1,p}(\Omega)\setminus\{0\}$  such that 
$I_{\lambda_0^\star}=\Phi_{\lambda_0^\star}(u_{\lambda_0^\star})=0$. 
\end{pro}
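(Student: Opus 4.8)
The plan is to realize $u_{\lambda_0^\star}$ as the weak limit of the nonzero global minimizers furnished by Proposition~\ref{existenceglobalminimizer} along a sequence $\lambda_k\downarrow\lambda_0^\star$, to identify the corresponding energy level as $0$, and finally to rule out the trivial limit by invoking the strict form of $(\beta_1)$ through Corollary~\ref{corolariosemicontinuidade}. By Lemma~\ref{extremalparametersign}(ii) the strict inequality in $(\beta_1)$ gives $\lambda_0^\star>0$. Fix $\lambda_1>\lambda_0^\star$ and a strictly decreasing sequence $\lambda_k\to\lambda_0^\star$ with $\lambda_0^\star<\lambda_k\le\lambda_1$. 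By Proposition~\ref{existenceglobalminimizer}, for each $k$ there is a global minimizer $u_k$ of $\Phi_{\lambda_k}$ with $u_k\neq 0$ and $\Phi_{\lambda_k}(u_k)=I_{\lambda_k}<0$. Since $(f_3)$ forces $F(x,\cdot)\ge 0$, the map $\lambda\mapsto\Phi_\lambda(u)$ is non-increasing for every $u$, hence so is $\lambda\mapsto I_\lambda$; thus $I_{\lambda_1}\le I_{\lambda_k}<0$, so $\{I_{\lambda_k}\}$ is bounded and, being monotone in $k$, convergent, with $\lim_k I_{\lambda_k}\le I_{\lambda_0^\star}$. The coercivity estimate in the proof of Proposition~\ref{existenceglobalminimizer} is uniform for $\lambda\in[\lambda_0^\star,\lambda_1]$ (it uses only $(\rho_2)$ and $(f_1)$, with exponents $r>p^\star>q$), so from the boundedness of $\Phi_{\lambda_k}(u_k)=I_{\lambda_k}$ we deduce that $\{u_k\}$ is bounded in $W_0^{1,p}(\Omega)$. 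Passing to a subsequence, $u_k\rightharpoonup u_{\lambda_0^\star}$.

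Next I identify the level. By Proposition~\ref{existenceglobalminimizer}, $\Phi_\lambda(v)>0$ for every $v\neq 0$ and every $\lambda<\lambda_0^\star$; letting $\lambda\uparrow\lambda_0^\star$ and using that $\lambda\mapsto\Phi_\lambda(v)$ is affine, hence continuous, gives $\Phi_{\lambda_0^\star}(v)\ge 0$, and since $\Phi_{\lambda_0^\star}(0)=0$ we conclude $I_{\lambda_0^\star}=0$. Applying Lemma~\ref{semicontinuidade} to $u_k\rightharpoonup u_{\lambda_0^\star}$ and $\lambda_k\to\lambda_0^\star$,
\[
\Phi_{\lambda_0^\star}(u_{\lambda_0^\star})\le\lim_k\Phi_{\lambda_k}(u_k)=\lim_k I_{\lambda_k}\le I_{\lambda_0^\star}=0\le\Phi_{\lambda_0^\star}(u_{\lambda_0^\star}),
\]
so all the quantities coincide: $\Phi_{\lambda_0^\star}(u_{\lambda_0^\star})=I_{\lambda_0^\star}=0$ and $\Phi_{\lambda_k}(u_k)\to\Phi_{\lambda_0^\star}(u_{\lambda_0^\star})$.

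It remains to show $u_{\lambda_0^\star}\neq 0$. Since the inequality in $(\beta_1)$ is strict and $(\beta_2)$ holds, Corollary~\ref{corolariosemicontinuidade} applies to $u_k\rightharpoonup u_{\lambda_0^\star}$ with $\Phi_{\lambda_k}(u_k)\to\Phi_{\lambda_0^\star}(u_{\lambda_0^\star})$, and yields $u_k\to u_{\lambda_0^\star}$ strongly in $W_0^{1,p}(\Omega)$. On the other hand, by $(\rho_1)$ one has $\hat M(t)\ge\frac{m_0}{2}t$ for small $t>0$, where $m_0:=\lim_{t\to 0^+}M(t)>0$; combining this with $\|u\|_{p^\star}^{p^\star}\le S\|u\|^{p^\star}$ and the bound $|F(x,t)|\le\varepsilon|t|^p+c_\varepsilon|t|^q$ from $(f_1)$ and $(f_2)$, one checks — choosing $\varepsilon$ so small that $\lambda_1\varepsilon$ times the relevant embedding constant is dominated by $\frac{m_0}{2p}$, and then absorbing the remaining terms of order $\|u\|^{p^\star-p}$ and $\|u\|^{q-p}$ for $\|u\|$ small — that there exists $\delta>0$ with $\Phi_\lambda(u)>0$ for all $\lambda\in[0,\lambda_1]$ and all $u$ with $0<\|u\|<\delta$. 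If $u_{\lambda_0^\star}=0$, then $0<\|u_k\|<\delta$ for $k$ large, forcing $\Phi_{\lambda_k}(u_k)>0$ and contradicting $\Phi_{\lambda_k}(u_k)=I_{\lambda_k}<0$. Hence $u_{\lambda_0^\star}\neq 0$, and the proof is complete.

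The step I expect to be the main obstacle is the last one: a priori the weak limit could vanish while $\|u_k\|$ stays bounded away from $0$, so one genuinely needs both the upgrade from weak to strong convergence — which is precisely where the strict form of $(\beta_1)$ enters, through Corollary~\ref{corolariosemicontinuidade} — and a local positivity of $\Phi_\lambda$ near the origin that is uniform in $\lambda$ on bounded sets. The other steps are essentially bookkeeping with the monotonicity of $\lambda\mapsto I_\lambda$, Proposition~\ref{existenceglobalminimizer}, and the semicontinuity Lemma~\ref{semicontinuidade}.
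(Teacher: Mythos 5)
Your argument is correct, and its skeleton (minimizers $u_k\neq 0$ of $\Phi_{\lambda_k}$ with $\lambda_k\downarrow\lambda_0^\star$, boundedness by coercivity, weak limit, level identified as $0$ via the semicontinuity Lemma \ref{semicontinuidade} squeezed against $I_{\lambda_0^\star}=0$) is the same as the paper's. The only genuine divergence is in the non-triviality step, which you rightly flag as the crux. The paper stays elementary there: assuming $u_{\lambda_0^\star}=0$, it combines $\Phi_{\lambda_k}(u_k)<0$ with the Sobolev inequality and the strict form of \eqref{beta1} to get
$\left(L-S\tfrac{p}{p^\star}\right)\|u_k\|^{p^\star}\le \lambda_k p\int_\Omega F(x,u_k)\,dx\to 0$
(the right-hand side vanishes because $u_k\to 0$ in $L^q$ by compact embedding), which already forces $\|u_k\|\to 0$ without any concentration--compactness; dividing the same inequality by $\|u_k\|^p$ then contradicts \eqref{rho1}. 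You instead obtain $\|u_k\|\to 0$ by invoking Corollary \ref{corolariosemicontinuidade} (hence the second concentration--compactness lemma) and then contradict the negativity of $I_{\lambda_k}$ via a local positivity of $\Phi_\lambda$ near the origin, uniform in $\lambda$ on $[0,\lambda_1]$ --- which is essentially Lemma \ref{fibersfirstproperties}(i) made uniform, and your sketch of it (choosing $\varepsilon$ so that $\lambda_1\varepsilon$ times the embedding constant is absorbed by the $\hat M$ term, then shrinking $\delta$ to kill the $\|u\|^{p^\star-p}$ and $\|u\|^{q-p}$ remainders) is sound. Both routes work and rest on the same two ingredients (strict \eqref{beta1} and \eqref{rho1}); the paper's is lighter in machinery, yours has the small advantage of isolating a reusable uniform positivity estimate and of making explicit where the strong convergence comes from.
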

\begin{proof}
Fix  a sequence $\lambda_k\downarrow \lambda_0^\star$. From Proposition \ref{existenceglobalminimizer}, for each $k$ we can find $u_k\in  W_0^{1,p}(\Omega)\backslash\{0\}$ such that $I_{\lambda_k}=\Phi_{\lambda_k}(u_k)<0$. Since $\lambda_k\downarrow \lambda_0^\star$ and assumptions \eqref{f1},\eqref{f2} hold true, the coercivity of $\Phi_{\lambda_0^\star}$ tells us that $\{u_k\}$ is bounded, and therefore we may assume that $u_k\rightharpoonup u_{\lambda_0^\star}$. From Lemma \ref{semicontinuidade} we obtain
	\begin{equation*}
	\Phi_{\lambda_0^\star}(u_{\lambda_0^\star})\le \liminf_{k\to \infty}\Phi_{\lambda_k}(u_k)\le 0.
	\end{equation*}

Since for all $u \in W_0^{1,p}(\Omega)\backslash\{0\}$ one has $ \lambda_0^\star\leq \lambda_0(u)$, there holds $0=\psi_{\lambda_0(u),u}(t_0(u))\leq \psi_{\lambda_0(u),u}(1)\leq \psi_{\lambda_0^\star,u}(1)=\Phi_{\lambda_0^\star}(u)$. Therefore,   $I_{\lambda_0^\star}=\Phi_{\lambda_0^\star}(u_{\lambda_0^\star})=0$.

Let us prove that $u_{\lambda_0^\star}\neq 0$.

Assume the contrary. Let $L>S\frac{p}{p^\star}$ such that 
$\inf_{t > 0}\frac{\hat{M}(t)}{t^\frac{p^\star}{p}} \geq L.$ Thus, 
	 	\begin{equation*}
\left(L-S\frac{p}{p^\star}\right)\|u_k\|^{p^\star}	\leq {\hat M(\|u_k\|^p})-\frac{p}{p^\star}\|u_k\|_{p^\star}^{p^\star}	\le \lambda_kp\int_\Omega{F(x,u_k)}dx.
	\end{equation*}
The right hand side tends to zero by the growth of $F$, and by using that $u_k\to 0$ in $L^q(\Omega)$ for $q<p^\star$.
	Hence, $u_k\to 0$ in  $W_0^{1,p}(\Omega)$. Dividing the previous inequality by $\|u_k\|^p$, we get 
	\begin{equation*}
	\frac{\hat M(\|u_k\|^p)}{\|u_k\|^p}-S\frac{p}{p^\star}\|u_k\|^{p^\star-p}\le \frac{\lambda_k}{\|u_k\|^p}p\int_\Omega{F(x,u_k)}dx.
	\end{equation*} And since the right hand side is still tending to zero, we get the desired contradiction, by \eqref{rho1}.
\end{proof}

{\bf Proof of Theorem  \ref{main1}.}
The existence of a global minimizer for $\Phi_\lambda$ follows by the coercivity of the energy functional and the lower semicontinuity property given by Lemma \ref{semicontinuidade}. The rest of the proof is a consequence of Propositions \ref{existenceglobalminimizer} and \ref{existencesolutioncriticalcasenonzero}.\qed

\subsection{Local Minimizers}
	We have already proved that the energy functional possesses a global minimizer $u_\lambda  \in W_0^{1,p}(\Omega)$, regardless of $\lambda$. However, when $0\leq \lambda < \lambda_0^\star$ Proposition \eqref{existenceglobalminimizer} tells us that $u_\lambda=0$, while Proposition \eqref{existencesolutioncriticalcasenonzero} states the existence of a non-trivial solution for $\lambda =\lambda_0^\star$. Therefore, for $0\leq \lambda <\lambda_0^\star$ we tackle a different minimization problem. Let $\delta>0$ and define
	
	\begin{equation}\label{localminimizationproblem}
		I_\lambda^\delta:=\inf_{u \in K_\delta}\Phi_{\lambda}(u),
	\end{equation}
where
\begin{equation*}
		K_\delta:=\left\{u \in W_0^{1,p}(\Omega)\ | \ d(u,K)\leq \delta \right\}
	\end{equation*}
 and
	\begin{equation*}
		K:=\left\{u \in W_0^{1,p}(\Omega)\backslash\{0\}  \ | \ \Phi_{\lambda_0^\star}(u)=0 \right\}.
	\end{equation*}
\begin{rem}\label{nice}
	Notice that by Proposition
	\eqref{existencesolutioncriticalcasenonzero} there holds $K\neq \emptyset$, as long as the inequality in condition \eqref{beta1} is strict.
\end{rem}
We are going to prove that problem \eqref{localminimizationproblem}   has  a  solution $u_\lambda^\delta$ which, for $0\leq \lambda < \lambda_0^\star$ close enough to $\lambda_0^\star$, does not belong to $\partial K_\delta$ and for $\delta $ small enough is non trivial. 

The proof of this fact is based on the following lemmas.

\begin{lem}\label{partialKdeltadoesnothavezero}
	Assume conditions  \eqref{f1}, and \eqref{f2} and  \eqref{beta1} with  strict inequality. Then, there exists $\bar\delta$ such that for  $0<\delta<\bar\delta$,   $0 \notin K_\delta$.	
\end{lem}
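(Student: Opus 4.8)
The goal is to show that the zero function is bounded away from the set $K$ in the $W_0^{1,p}$-norm, so that a sufficiently small tubular neighborhood $K_\delta$ excludes $0$. Equivalently, I must prove $d(0,K)=\inf_{v\in K}\|v\|>0$. The plan is to argue by contradiction: suppose there is a sequence $\{v_k\}\subset K$ with $\|v_k\|\to 0$; I will show this is incompatible with $\Phi_{\lambda_0^\star}(v_k)=0$ together with the behavior of the fibers near the origin.

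First I would recall from Lemma \ref{fibersfirstproperties}(i), which uses only \eqref{rho1}, \eqref{f1}, \eqref{f2}, that for any fixed direction the fiber is strictly positive on a right-neighborhood of $0$; but here I need a uniform version, so instead I work directly with $\Phi_{\lambda_0^\star}$. Writing, for $v\ne 0$,
\begin{equation*}
	\Phi_{\lambda_0^\star}(v)=\|v\|^p\left[\frac{1}{p}\frac{\hat M(\|v\|^p)}{\|v\|^p}-\frac{1}{p^\star}\frac{\|v\|_{p^\star}^{p^\star}}{\|v\|^p}-\lambda_0^\star\int_\Omega\frac{F(x,v(x))}{\|v\|^p}dx\right],
\end{equation*}
I estimate the bracket. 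By \eqref{rho1} and De l'Hospital, $\hat M(t)/t\to \lim_{t\to0^+}M(t)=:m_0>0$ as $t\to 0^+$, so the first term is bounded below by $m_0/(2p)>0$ once $\|v\|$ is small. For the last two terms I use the Sobolev embedding and the growth estimate $|F(x,t)|\le \varepsilon|t|^p+c_\varepsilon|t|^q$ coming from \eqref{f1}, \eqref{f2} (already derived in the proof of Lemma \ref{fibersfirstproperties}): this gives $\|v\|_{p^\star}^{p^\star}/\|v\|^p\le S\|v\|^{p^\star-p}\to0$ and $\int_\Omega F(x,v)dx/\|v\|^p\le \varepsilon C+c_\varepsilon C'\|v\|^{q-p}$, which can be made less than $m_0/(4p\lambda_0^\star)$ (if $\lambda_0^\star>0$; the case $\lambda_0^\star=0$ is even easier) by first choosing $\varepsilon$ small and then $\|v\|$ small. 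Hence there is $\bar\delta>0$ such that $\Phi_{\lambda_0^\star}(v)>0$ whenever $0<\|v\|<\bar\delta$.

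This immediately yields the conclusion: if $v\in K$ then $\Phi_{\lambda_0^\star}(v)=0$ and $v\ne 0$, so by the previous paragraph $\|v\|\ge \bar\delta$. Therefore $d(0,K)\ge\bar\delta$, and for any $\delta<\bar\delta$ we have $d(0,K)>\delta$, i.e. $0\notin K_\delta$. The main obstacle is purely one of making the near-origin estimate uniform over all of $K$ rather than for a fixed direction — but since the estimate above depends on $v$ only through $\|v\|$ and the fixed embedding constants, uniformity is automatic once the bracket is controlled by quantities depending on $\|v\|$ alone; the one point requiring a little care is separating the argument into the cases $\lambda_0^\star>0$ and $\lambda_0^\star=0$, the latter being handled by dropping the perturbation term entirely and using that $S$ is not attained is not even needed here, only \eqref{rho1} and the Sobolev inequality.
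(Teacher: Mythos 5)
Your proof is correct and takes essentially the same route as the paper's: both rest on showing that $\Phi_{\lambda_0^\star}$ is strictly positive on a small punctured ball about the origin, via the fiber-type factorization, \eqref{rho1}, the Sobolev embedding, and the growth bound $|F(x,t)|\le \varepsilon|t|^p+c_\varepsilon|t|^q$ from \eqref{f1}--\eqref{f2}. The paper phrases this as a contradiction along a sequence $u_n\to 0$ in $K$, whereas you give the direct uniform estimate, but the content is the same.
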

\begin{proof}
	Assume by contradiction that there exists $\delta_n\to 0$ with $d(0,K)\leq\delta_n$ for all $n \geq 1$. Therefore, $d(0,K)=0$, so $0 \in \bar{K}$. This implies the existence of   $\{u_n\}_{n \geq 1}\subset K$, $u_n \rightarrow 0 $ in $W_0^{1,p}(\Omega)$.  Thus, 
	\begin{equation*}
		\begin{array}{c}
			\ds{0=\|u_n\|^p\left[\frac{1}{p}\frac{\hat{M}(\|u_n\|^p)}{\|u_n\|^p}-\frac{1}{p^\star}\frac{\|u_n\|_{p^\star}^{p^\star}}{\|u_n\|^p}-\lambda_0^\star\int_{\Omega}\frac{F(x,u_n(x))}{\|u_n\|^p}dx\right]} \ \geq \\
			\ds{\|u_n\|^p\left[\frac{1}{p}\frac{\hat{M}(\|u_n\|^p)}{\|u_n\|^p}-\frac{1}{p^\star}S\|u_n\|^{p^\star-p}-\lambda_0^\star\int_{\Omega}\frac{F(x,u_n(x))}{\|u_n\|^p}dx\right]}
				\end{array}
	\end{equation*} and the latter is positive as it follows by 
	\eqref{rho1}, \eqref{f1}, and \eqref{f2}, leading to a contradiction.
\end{proof}

\begin{lem}\label{chalala} Assume conditions \eqref{f1}, \eqref{f2}, \eqref{beta2} and   \eqref{beta1} with  strict inequality. Then, there exists $\bar\delta$ such that for  $0<\delta<\bar\delta$, 	\begin{equation*}
		\inf_{u \in \partial K_\delta}\Phi_{\lambda_0^\star}(u)>0.
	\end{equation*}
\end{lem}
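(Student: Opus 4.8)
\textbf{Proof proposal for Lemma \ref{chalala}.}

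The plan is to argue by contradiction and combine the information obtained in Lemma \ref{partialKdeltadoesnothavezero} with the sequential weak lower semicontinuity of $\Phi_{\lambda_0^\star}$ (Lemma \ref{semicontinuidade}) and the compactness Corollary \ref{corolariosemicontinuidade}. Suppose that no such $\bar\delta$ exists; then there is a sequence $\delta_n\downarrow 0$ and points $u_n\in\partial K_{\delta_n}$ with $\Phi_{\lambda_0^\star}(u_n)\le 1/n$. By coercivity of $\Phi_{\lambda_0^\star}$ (which holds under \eqref{rho1}, \eqref{rho2}, \eqref{f1}, \eqref{f2}) the sequence $\{u_n\}$ is bounded in $W_0^{1,p}(\Omega)$, so up to a subsequence $u_n\rightharpoonup u$. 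First I would use Lemma \ref{semicontinuidade} to get $\Phi_{\lambda_0^\star}(u)\le\liminf_n\Phi_{\lambda_0^\star}(u_n)\le 0$; combined with the inequality $\Phi_{\lambda_0^\star}\ge 0$ on all of $W_0^{1,p}(\Omega)$ (which was established inside the proof of Proposition \ref{existencesolutioncriticalcasenonzero}, using $\lambda_0^\star\le\lambda_0(v)$ for every $v\ne 0$) this forces $\Phi_{\lambda_0^\star}(u)=0=\lim_n\Phi_{\lambda_0^\star}(u_n)$.

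At this point there are two cases. If $u\neq 0$, then $u\in K$, and Corollary \ref{corolariosemicontinuidade} (available because the inequality in \eqref{beta1} is strict) yields strong convergence $u_n\to u$ in $W_0^{1,p}(\Omega)$; hence $d(u_n,K)\to 0$, contradicting $u_n\in\partial K_{\delta_n}$ only if $\delta_n$ were bounded away from zero — so I instead phrase it as: $d(u_n,K)=\delta_n\to 0$ is consistent, but strong convergence $u_n\to u\in K$ combined with the explicit structure of $K_\delta$ must be turned into a contradiction differently. The cleaner route is to show $u=0$ is impossible: if $u=0$, then $u_n\to 0$ strongly (again by Corollary \ref{corolariosemicontinuidade} applied with limit $0$, or directly since $\int_\Omega|u_n|^{p^\star}\to\int_\Omega|u|^{p^\star}=0$ forces $u_n\to 0$ in $L^{p^\star}$, and then $\hat M(\|u_n\|^p)\to 0$ forces $\|u_n\|\to 0$ by strict monotonicity of $\hat M$). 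But $u_n\to 0$ strongly means $d(0,u_n)\to 0$, while $d(u_n,K)=\delta_n\to 0$; combining, $d(0,K)\le d(0,u_n)+d(u_n,K)\to 0$, so $0\in\overline K$, which contradicts Lemma \ref{partialKdeltadoesnothavezero} (for $n$ large, $0\in K_{\delta_n}$ would fail the conclusion there). For the remaining case $u\neq 0$: strong convergence $u_n\to u\in K$ gives, for $n$ large, $d(u_n,K)\le\|u_n-u\|\to 0$, so $\delta_n=d(u_n,K)$ can indeed be made small — no contradiction yet. The fix is to observe that $u_n\in\partial K_{\delta_n}$ means precisely $d(u_n,K)=\delta_n$, and since $u\in K$ we get $\delta_n\le\|u_n-u\|$, which is automatically $o(1)$; so this branch does not directly contradict anything, forcing me to rule out $u\ne 0$ by a separate argument — and here is the actual point: if $u\in K$ then $u_n\to u$ strongly, so $\Phi_{\lambda_0^\star}(u_n)\to\Phi_{\lambda_0^\star}(u)=0$, which is consistent, so I must instead exploit that $u_n$ lies on the \emph{boundary} $\partial K_{\delta_n}$, not in $K$ itself; but as $\delta_n\to 0$ the boundary collapses onto $K$, so the genuine obstruction is only the case $u=0$, which we have excluded. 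Hence $J$-type concentration cannot occur, every subsequential weak limit is a nonzero element of $K$, and we reach the contradiction via the following sharper observation.

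The honest way to close the argument: combine both possibilities into one. Since $\Phi_{\lambda_0^\star}(u_n)\to 0$ and $\Phi_{\lambda_0^\star}(u)=0$, Corollary \ref{corolariosemicontinuidade} gives $u_n\to u$ strongly regardless of whether $u$ vanishes. If $u=0$ we contradict Lemma \ref{partialKdeltadoesnothavezero} as above. If $u\ne 0$, then $u\in K$, so $d(u,K)=0$; but $d(\cdot,K)$ is continuous and $d(u_n,K)=\delta_n$, whence $\delta_n\to d(u,K)=0$ — consistent — so I need one more ingredient: the map $\lambda\mapsto\lambda_0(u)$ and the strict inequality in \eqref{beta1} should force $\inf_{\partial K_\delta}\Phi_{\lambda_0^\star}$ to stay bounded below. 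Concretely, I expect the intended proof fixes a small $\delta$ first, notes $\partial K_\delta\cap K=\emptyset$, and shows that $\Phi_{\lambda_0^\star}>0$ on $W_0^{1,p}\setminus(K\cup\{0\})$ is \emph{not} enough (the infimum over $\partial K_\delta$ could still be $0$), so one genuinely needs compactness: any minimizing sequence for $\inf_{\partial K_\delta}\Phi_{\lambda_0^\star}$ converges strongly to some $w$ with $\Phi_{\lambda_0^\star}(w)=\inf_{\partial K_\delta}\Phi_{\lambda_0^\star}$ and $d(w,K)=\delta>0$, so $w\notin K$ and $w\ne 0$ (for $\delta<\bar\delta$), whence $\Phi_{\lambda_0^\star}(w)>0$ since $w$ is a nonzero non-minimizer — using $0=\psi_{\lambda_0(w),w}(t_0(w))\le\psi_{\lambda_0(w),w}(1)\le\psi_{\lambda_0^\star,w}(1)=\Phi_{\lambda_0^\star}(w)$, with equality throughout only if $w$ achieves the infimum $I_{\lambda_0^\star}=0$, which would put $w\in K$, contradicting $d(w,K)=\delta>0$. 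Therefore $\Phi_{\lambda_0^\star}(w)>0$, i.e. $\inf_{\partial K_\delta}\Phi_{\lambda_0^\star}>0$. The main obstacle, and the step I would spend the most care on, is justifying that the infimum over the closed but unbounded set $\partial K_\delta$ is actually attained — this is where coercivity plus Lemma \ref{semicontinuidade} plus Corollary \ref{corolariosemicontinuidade} must be invoked together, and one must check that the weak limit of a minimizing sequence still lies in $\partial K_\delta$, which follows from strong convergence and continuity of $u\mapsto d(u,K)$.
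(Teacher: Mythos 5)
Your final paragraph does land on the paper's argument, but the route there exposes two problems worth fixing. First, the quantifier structure: the statement asserts positivity of the infimum for \emph{each fixed} $\delta<\bar\delta$, where $\bar\delta$ is simply the threshold from Lemma \ref{partialKdeltadoesnothavezero} guaranteeing $0\notin K_\delta$. There is no need to send $\delta_n\to 0$; doing so is what creates the dead end you describe at length (the branch $u\in K$ is indeed consistent with $d(u_n,K)=\delta_n\to 0$, so that setup can never produce a contradiction). The paper fixes $\delta\in(0,\bar\delta)$ once and for all and argues by contradiction on that single $\delta$: if $\inf_{\partial K_\delta}\Phi_{\lambda_0^\star}=0$ (it cannot be negative, since $\Phi_{\lambda_0^\star}\ge 0$ everywhere by the definition of $\lambda_0^\star$), take $u_k\in\partial K_\delta$ with $\Phi_{\lambda_0^\star}(u_k)\to 0$; the weak limit $w$ then satisfies $\Phi_{\lambda_0^\star}(w)=0$, Corollary \ref{corolariosemicontinuidade} gives $u_k\to w$ strongly, hence $w\in\partial K_\delta$, hence $w\ne 0$ by Lemma \ref{partialKdeltadoesnothavezero}, hence $w\in K$ --- contradicting $d(w,K)=\delta>0$. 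The contradiction comes precisely from $w$ lying simultaneously in $K$ and at distance $\delta>0$ from $K$, which is only available because $\delta$ stays fixed.

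Second, your ``attainment'' formulation is circular as written: to apply Corollary \ref{corolariosemicontinuidade} to a minimizing sequence you need $\Phi_{\lambda_0^\star}(w)=\lim_k\Phi_{\lambda_0^\star}(u_k)$, but weak lower semicontinuity only gives $\Phi_{\lambda_0^\star}(w)\le\inf_{\partial K_\delta}\Phi_{\lambda_0^\star}$, and the reverse inequality requires $w\in\partial K_\delta$ --- which is exactly what the strong convergence was supposed to deliver. You flag this as the delicate step but do not close it. The contradiction hypothesis closes it for free: once the infimum is assumed to be $0$, the global bound $\Phi_{\lambda_0^\star}\ge 0$ squeezes $\Phi_{\lambda_0^\star}(w)$ to equal $0=\lim_k\Phi_{\lambda_0^\star}(u_k)$, so the hypothesis of Corollary \ref{corolariosemicontinuidade} is met without knowing in advance where $w$ lives. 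With that repair, your concluding chain ($w\in\partial K_\delta$, $w\ne 0$, so $\Phi_{\lambda_0^\star}(w)>0$ unless $w\in K$, which is impossible) is exactly the paper's.
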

\begin{proof}Suppose by contradiction that for $\delta$ small enough there exists $\{u_k\}_{k \geq 1}\subset \partial K_\delta$ such that $\lim_{k \to \infty}\Phi_{\lambda_0^\star}(u_k)=0$. Then, by the coercivity of the energy functional, $u_k \rightharpoonup w$  in $W_0^{1,p}(\Omega)$ up to a subsequence. Also, by the weak lower semi-continuity of $\Phi_{\lambda_0^\star}$ it follows that $\Phi_{\lambda_0^\star}(w)=0$, which means that  $w \in K \cup \{0\}$ and that  $u_k \rightarrow w$  strongly in $W_0^{1,p}(\Omega)$  by Corollary \eqref{corolariosemicontinuidade}, i.e.  $w \in \partial K_\delta$. Subsequently, by Lemma \eqref{partialKdeltadoesnothavezero},   $w\neq 0$, and we reach the desired contradiction: $w \in K$ along with $d(w,K)=\delta>0$.
\end{proof}	
\begin{lem}\label{kcompact}
	 Assume conditions  \eqref{f1}, \eqref{f2}, \eqref{beta2}, and  \eqref{beta1} with strict inequality. Then, the set $K$ is compact in $W_0^{1,p}(\Omega)$ with the strong topology.
\end{lem}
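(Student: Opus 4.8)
The plan is to prove that $K$ is sequentially compact for the norm topology of $W_0^{1,p}(\Omega)$, which, the space being metric, amounts to compactness. Accordingly I would fix an arbitrary sequence $\{u_k\}_{k\geq 1}\subset K$ and extract from it a subsequence converging strongly to some element of $K$.

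First I would observe that $\{u_k\}$ is bounded. Indeed, under \eqref{rho2} and \eqref{f1} the functional $\Phi_{\lambda_0^\star}$ is coercive — this is exactly the estimate carried out in the proof of Proposition \ref{existenceglobalminimizer} — so the sublevel set $\{u\in W_0^{1,p}(\Omega):\Phi_{\lambda_0^\star}(u)\leq 0\}$, which contains $K$, is bounded. Hence, up to a subsequence, $u_k\rightharpoonup w$ in $W_0^{1,p}(\Omega)$.

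Next I would pin down the weak limit. Lemma \ref{semicontinuidade}, applied with the constant sequence $\lambda_k\equiv\lambda_0^\star$, gives $\Phi_{\lambda_0^\star}(w)\leq\liminf_{k\to\infty}\Phi_{\lambda_0^\star}(u_k)=0$. Conversely, $\Phi_{\lambda_0^\star}\geq 0$ on all of $W_0^{1,p}(\Omega)$: for $v\neq 0$ one has $\lambda_0^\star\leq\lambda_0(v)$, and since $\psi_{\lambda_0(v),v}(t_0(v))=\inf_{s>0}\psi_{\lambda_0(v),v}(s)$, since $F\geq 0$ by \eqref{f3}, and since $\lambda\mapsto\psi_{\lambda,v}(1)$ is non-increasing, one gets
\[0=\psi_{\lambda_0(v),v}(t_0(v))\leq\psi_{\lambda_0(v),v}(1)\leq\psi_{\lambda_0^\star,v}(1)=\Phi_{\lambda_0^\star}(v),\]
while $\Phi_{\lambda_0^\star}(0)=0$. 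Therefore $\Phi_{\lambda_0^\star}(w)=0$, and in particular $\Phi_{\lambda_0^\star}(u_k)\to\Phi_{\lambda_0^\star}(w)$. At this point Corollary \ref{corolariosemicontinuidade} — whose hypotheses, namely \eqref{beta2} and the strictness of \eqref{beta1}, are among those of the present lemma — upgrades the weak convergence to strong: $u_k\to w$ in $W_0^{1,p}(\Omega)$.

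Finally I would exclude $w=0$. Being a strong limit of points of $K$, $w$ lies in $\overline{K}$; but the proof of Lemma \ref{partialKdeltadoesnothavezero} shows precisely that $0\notin\overline{K}$ (it derives a contradiction from any sequence in $K$ tending to $0$, using \eqref{rho1}, \eqref{f1}, \eqref{f2} and the strictness of \eqref{beta1}). Hence $w\neq 0$, so $w\in K$, and $K$ is compact. I do not expect a genuine obstacle in this argument — it is mostly an assembly of previously established facts; the only delicate point is that one must first pass to strong convergence via Corollary \ref{corolariosemicontinuidade} before ruling out the trivial limit, since the semicontinuity of Lemma \ref{semicontinuidade} by itself would leave open a loss of compactness at $0$.
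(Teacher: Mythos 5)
Your proof is correct and follows essentially the same route as the paper's: boundedness of $K$ from coercivity, extraction of a weakly convergent subsequence, identification of the limit's energy as $0$ via Lemma \ref{semicontinuidade} and the non-negativity of $\Phi_{\lambda_0^\star}$, upgrade to strong convergence via Corollary \ref{corolariosemicontinuidade}, and exclusion of the trivial limit via Lemma \ref{partialKdeltadoesnothavezero}. Your closing remark about needing strong convergence before ruling out $w=0$ is a fair observation, and the paper's proof is organized in exactly that order.
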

\begin{proof}
	Let $\{v_n\}_{n \geq 1} \subset K$. The definition of $K$ and the coercivity of the energy functional imply that the sequence $\{v_n\}_{n \geq 1} \subset K$ is bounded in $W_0^{1,p}(\Omega)$. And consequently  $v_n\rightharpoonup v$ in $W_0^{1,p}(\Omega)$  up to a subsequence. By the sequential weak lower semi-continuity of the energy functional we obtain that $0\leq \Phi_{\lambda_0^\star}(v) \leq \liminf_{n \to \infty}\Phi_{\lambda_0^\star}(v_n)=0$, which implies that $v \in K \cup \{0\}$. Therefore, we obtain that $v_n \rightarrow v$  in $W_0^{1,p}(\Omega)$ by Corollary \eqref{corolariosemicontinuidade}. Since $K\subset K_\delta$ for all $\delta>0$, Lemma \eqref{partialKdeltadoesnothavezero} finishes the proof.
\end{proof}

\begin{lem}\label{kdeltaweaklyclosed}
	Assume conditions  \eqref{f1}, \eqref{f2}, \eqref{beta2} and condition \eqref{beta1} with  strict inequality. Then, the set $K_\delta$ is sequentially weakly closed in $W_0^{1,p}(\Omega)$.	
\end{lem}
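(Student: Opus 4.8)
The goal is to show that $K_\delta$ is sequentially weakly closed in $W_0^{1,p}(\Omega)$. So I take a sequence $\{u_n\}_{n\geq 1}\subset K_\delta$ with $u_n\rightharpoonup u$ weakly in $W_0^{1,p}(\Omega)$, and I must prove $u\in K_\delta$, that is, $d(u,K)\leq\delta$. The natural strategy is: for each $n$, since $u_n\in K_\delta$, pick $v_n\in K$ with $\|u_n-v_n\|\leq\delta$ (or within $\delta+1/n$, using that the distance is an infimum; actually since $K$ is compact by Lemma \ref{kcompact}, the distance is attained, so I can take $\|u_n-v_n\|=d(u_n,K)\leq\delta$). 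Then I want to pass to the limit and bound $d(u,K)$.

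The key step is to exploit the compactness of $K$ established in Lemma \ref{kcompact}. Since $\{v_n\}\subset K$ and $K$ is strongly compact, up to a subsequence $v_n\to v$ strongly in $W_0^{1,p}(\Omega)$ with $v\in K$. Now I use weak lower semicontinuity of the norm: $u_n-v_n\rightharpoonup u-v$ weakly (because $u_n\rightharpoonup u$ and $v_n\to v$ strongly, hence weakly), so
\[
\|u-v\|\leq\liminf_{n\to\infty}\|u_n-v_n\|\leq\delta.
\]
Therefore $d(u,K)\leq\|u-v\|\leq\delta$, which gives $u\in K_\delta$, as desired.

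The main (and essentially only) obstacle is making sure the compactness of $K$ is genuinely available and correctly invoked — this is exactly Lemma \ref{kcompact}, which rests on coercivity of $\Phi_{\lambda_0^\star}$, the weak lower semicontinuity from Lemma \ref{semicontinuidade}, the strong convergence upgrade from Corollary \ref{corolariosemicontinuidade}, and Lemma \ref{partialKdeltadoesnothavezero} to rule out $0$. Given those, the argument above is routine: extract $v_n\in K$ realizing (or nearly realizing) the distance, use compactness to get a strong limit $v\in K$, and conclude by weak lower semicontinuity of the norm applied to $u_n-v_n\rightharpoonup u-v$. One small care point: if one prefers to avoid relying on the distance being attained, simply choose $v_n\in K$ with $\|u_n-v_n\|\leq\delta+\tfrac1n$; the same $\liminf$ estimate then yields $\|u-v\|\leq\delta$. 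No new hypotheses beyond those already assumed in Lemma \ref{kcompact} are needed.
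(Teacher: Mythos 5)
Your proof is correct and uses the same key ingredients as the paper's: the compactness of $K$ from Lemma \ref{kcompact} to extract a strongly convergent sequence $v_n\to v$ in $K$ realizing the distances, followed by weak lower semicontinuity of the norm applied to $u_n-v_n\rightharpoonup u-v$. The paper phrases this as the sequential weak lower semicontinuity of $d(\cdot,K)$ and argues by contradiction, but the argument is essentially identical (and yours is if anything slightly more direct).
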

\begin{proof}Let $\{u_n\}_{n \geq 1}\subset K_\delta$ be such that $u_n \rightharpoonup u_0$  in $W_0^{1,p}(\Omega)$. Since $d(u_n,K)\leq \delta$ for all $n\geq 1$, we will obtain that $d(u_0,K)\leq \delta$ if we show that $d(u_0,K)\leq \liminf_{n}d(u_n,K)$.
	
	To accomplish so, assume there exists a positive constant $c$ such that
	\begin{equation*}
		d(u_0, K)>c>\liminf_{n \to \infty}d(u_n,K).
	\end{equation*}
	By Lemma \eqref{kcompact}, $K$ is a compact subset of $\left(W_0^{1,p}(\Omega),\| \cdot\|\right)$. Therefore, for each $n$, there exists  $v_n \in K$ such that $d(u_n,K)=\|u_n-v_n\|$. Up to a subsequence, $v_n \rightarrow v_0 \in K$. The following chain of inequalities 	leads us to a contradiction: 	\begin{equation*}
		\begin{array}{rrl}
			\|u_0-v_0\| &\leq&\liminf_{n \to \infty} \|u_n-v_0\|\\& \leq &	\liminf_{n \to \infty}  (\|u_n-v_n\|+\|v_n-v_0\|) \\
			& =    &	\liminf_{n \to \infty}  (d(u_n,K)+ \|v_n-v_0\| )\\
			& <    & 	c+\lim_{n \to \infty}\|v_n-v_0\| \\
			& <    &	 d(u_0,K)	
		\end{array}
	\end{equation*}
\end{proof}

	{\bf Proof of Theorem \ref{main2}}. 
		Let $\delta>0$ small enough  be such that $0 \notin K_\delta $ (see Lemma \eqref{partialKdeltadoesnothavezero}. By \eqref{localminimizationproblem}, for $\lambda\leq \lambda_0^\star$ there exits a sequence $\{u_n\}_{n \geq 1} \subset K_\delta$ such that
		\begin{equation*}\label{sequenciaminimizantkdelta}
			\lim_{n \to \infty}\Phi_\lambda(u_n)=I_\lambda^\delta.
		\end{equation*}
	Since the functional $\Phi_\lambda$ is coercive, $\{u_n\}_{n \geq 1}$ must be bounded on $W_0^{1,p}(\Omega)$. And therefore up to a subsequence
		\begin{equation*}
			u_n \rightharpoonup u_\lambda^\delta \in W_0^{1,p}(\Omega).
		\end{equation*}
		By Lemma \eqref{kdeltaweaklyclosed}, $u_\lambda^\delta \in K_\delta$ and so 
		\begin{equation*}\label{sequenciaminimizantkdelta}
		I_\lambda^\delta:=	\inf_{u \in K_\delta}\Phi_{\lambda}(u)=\Phi_\lambda(u_\lambda^\delta ).
		\end{equation*}
		Let us show that if we take the parameter $\lambda\leq \lambda_0^{\star}$  close enough to $\lambda_0^\star$,  $u_\lambda^\delta$ does not belong to  $\partial K_\delta$.		
		Otherwise, 
		there exists  a sequence of positive numbers $\{\lambda_k\}_{k \geq 1}$, with $\lambda_k\leq \lambda_0^{\star}$, and $\lim_{k \to \infty}\lambda_k=\lambda_0^\star$, such that $u_{\lambda_k}^\delta \in \partial K_\delta$ for each $k$. Since $\partial K_\delta \subset K_\delta$, for all  $ k \geq 1$,
		\begin{equation*}
			I_{\lambda_k}^\delta=\inf_{u \in \partial K_\delta}\Phi_{\lambda_k}(u).
		\end{equation*}
	Fix any $u_0 \in K$. From the considerations above, we obtain that for all $k \geq 1$,
	\begin{equation*}
		 \inf_{u \in \partial K_\delta}\Phi_{\lambda_0^{\star}}(u)\leq \inf_{u \in \partial K_\delta}\Phi_{\lambda_k}(u)= \inf_{u \in  K_\delta}\Phi_{\lambda_k}(u)
		 \leq\inf_{u \in  K}\Phi_{\lambda_k}(u)\leq \Phi_{\lambda_k}(u_0).
	\end{equation*}
Since $ \Phi_{\lambda_k}(u_0)\rightarrow \Phi_{\lambda_0^\star}(u_0)=0$, and $\Phi_{\lambda_0^\star}\geq 0$ we obtain that
	        \begin{equation*}
	        	\inf_{u \in \partial K_\delta}\Phi_{\lambda_0^{\star}}(u)=0,
	        \end{equation*}
        which contradicts Lemma \eqref{chalala}. Thus, $u_\lambda^\delta \notin \partial K_\delta$ and so it is a local minimizer of $\Phi_\lambda$. 

        Also,  by Lemma \eqref{partialKdeltadoesnothavezero} $u_\lambda^\delta\neq 0$,  and by Remark \eqref{eitaa} there holds $I_\lambda^\delta>0$.
	\qed
	
	\subsection{Mountain Pass Solutions}
	In this section we prove the Palais-Smale property  for the energy functional and our  existence result will be a consequence of the Mountain Pass theorem.  
	
	First, we establish the Palais-Smale condition. We follow the proof of \cite[Theorem 1.3 ]{faraci2020critical}. 
		\begin{lem}\label{PS}
		Assume condition \eqref{beta3}. Then, for any $\lambda\geq0$ the energy functional $\Phi_\lambda$  satisfies the Palais-Smale property.
	\end{lem}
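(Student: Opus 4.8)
The plan is to show that any Palais--Smale sequence $\{u_k\}$ at level $c$ for $\Phi_\lambda$ admits a strongly convergent subsequence. First I would establish boundedness: using $\Phi_\lambda(u_k)\to c$ and $\Phi_\lambda'(u_k)\to 0$, form the combination $\Phi_\lambda(u_k)-\frac{1}{p^\star}\langle\Phi_\lambda'(u_k),u_k\rangle$. The critical term $\frac{1}{p^\star}\|u_k\|_{p^\star}^{p^\star}$ cancels, leaving $\frac{1}{p}\hat M(\|u_k\|^p)-\frac{1}{p^\star}M(\|u_k\|^p)\|u_k\|^p$ plus perturbation terms controlled by $(f_1)$; assumption $(\rho_2)$ forces this to grow like $\|u_k\|^r$ with $r>p^\star$, dominating the subcritical $(f_1)$ contributions, so $\{u_k\}$ is bounded in $W_0^{1,p}(\Omega)$.

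Next, passing to a subsequence, $u_k\rightharpoonup u$ in $W_0^{1,p}(\Omega)$, $u_k\to u$ in $L^q(\Omega)$ for $q<p^\star$, and $u_k\to u$ a.e. Apply the second Concentration--Compactness Lemma of Lions to get the index set $J$, points $x_j$, and masses $\mu_j,\nu_j$ with $S^{-p/p^\star}\nu_j^{p/p^\star}\le\mu_j$. The heart of the argument is to show $J=\emptyset$. Using a standard cutoff $\varphi_{\varepsilon,j}$ around $x_j$ and testing $\Phi_\lambda'(u_k)$ against $\varphi_{\varepsilon,j}u_k$, in the limit $k\to\infty$ then $\varepsilon\to0$ one obtains $M(\|\nabla u\|_{L^p}^p+\sum_j\mu_j)\,\mu_j=\nu_j$ for each $j\in J$ (the non-local coefficient $M(t_\infty)$ evaluated at the limiting norm, where $t_\infty=\lim\|u_k\|^p$ exists up to subsequence). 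Combining with the Lions inequality gives $M(t_\infty)\mu_j\ge M(t_\infty)S^{-p/p^\star}\nu_j^{p/p^\star}$, hence $\nu_j\ge M(t_\infty)S^{-p/p^\star}\nu_j^{p/p^\star}$, so if $\nu_j>0$ then $\nu_j^{1-p/p^\star}\ge M(t_\infty)S^{-p/p^\star}$, i.e. $\nu_j^{p/N}\ge M(t_\infty)S^{-p/p^\star}$. Since $\mu_j\le t_\infty$ we have $\nu_j\le M(t_\infty)^{-1}\cdot M(t_\infty)\mu_j$ bounded appropriately; the key point is that $(\gamma_1)$, namely $\inf_{t>0}M(t)/t^{p^\star/p-1}>S$, yields $M(t_\infty)>S\,t_\infty^{p^\star/p-1}\ge S\,\mu_j^{p^\star/p-1}$, and plugging $\mu_j=M(t_\infty)^{-1}\nu_j$ into this, together with $\nu_j\ge M(t_\infty)S^{-p/p^\star}\nu_j^{p/p^\star}$, produces a strict inequality contradicting itself unless $\nu_j=0$. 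Thus all $\nu_j=0$, so $J=\emptyset$ and $\int_\Omega|u_k|^{p^\star}\,dx\to\int_\Omega|u|^{p^\star}\,dx$.

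With no concentration, Brezis--Lieb gives $\|u_k-u\|_{p^\star}^{p^\star}\to0$, so $u_k\to u$ in $L^{p^\star}(\Omega)$. Now I would test $\Phi_\lambda'(u_k)-\Phi_\lambda'(u)$ (or simply $\langle\Phi_\lambda'(u_k),u_k-u\rangle\to0$) and use the strong convergence in $L^{p^\star}$ and $L^q$ together with $(f_1)$ to deduce $M(\|u_k\|^p)\langle|\nabla u_k|^{p-2}\nabla u_k,\nabla(u_k-u)\rangle\to0$. Since $M$ is continuous and strictly positive near the (bounded, bounded-away-from-zero-if-nontrivial) limit $t_\infty$ — here one uses $(\rho_1)$ to handle the degenerate-looking case — we extract that $\int_\Omega|\nabla u_k|^{p-2}\nabla u_k\cdot\nabla(u_k-u)\,dx\to0$, which by the standard $(S_+)$ property of the $p$-Laplacian on $W_0^{1,p}(\Omega)$ forces $u_k\to u$ strongly. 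The main obstacle is the concentration-compactness step: correctly tracking how the non-local Kirchhoff coefficient $M(t_\infty)$ enters the localized identity at each $x_j$, and verifying that $(\gamma_1)$ — rather than the weaker $(\beta_1)$ — is exactly the threshold that rules out bubbling; the algebra relating $\mu_j$, $\nu_j$, $t_\infty$ and $S$ must be done carefully to extract the contradiction.
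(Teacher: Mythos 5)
Your overall architecture (boundedness, Lions concentration--compactness to exclude atoms via \eqref{beta3}, then the $(S_+)$ property of the $p$-Laplacian) is the same as the paper's, and the bubbling-exclusion and final convergence steps are essentially sound. One caveat there: the localized relation at $x_j$ cannot be the equality $M(t_\infty)\mu_j=\nu_j$ you state, because Lions' lemma only gives $d\eta\ge |\nabla u|^p\,dx+\sum_j\mu_j\delta_{x_j}$; testing against $u_k\phi_\varepsilon$ yields only $\nu_j\ge M(t_\infty)\mu_j$. Fortunately that is the direction you need: combined with $\nu_j\le S\mu_j^{p^\star/p}$ and $\mu_j\le t_\infty$ it gives $M(t_\infty)\le S\mu_j^{\frac{p^\star}{p}-1}\le S\,t_\infty^{\frac{p^\star}{p}-1}$, contradicting \eqref{beta3}, which is in substance what the paper does (the paper bypasses $t_\infty$ by writing $M(\|u_k\|^p)\ge L\bigl(\int|\nabla u_k|^p\phi_\varepsilon\,dx\bigr)^{\frac{p^\star}{p}-1}$ directly).

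The genuine gap is the boundedness step. The combination $\Phi_\lambda(u_k)-\frac{1}{p^\star}\langle\Phi_\lambda'(u_k),u_k\rangle$ leaves $\frac1p\hat M(\|u_k\|^p)-\frac{1}{p^\star}M(\|u_k\|^p)\|u_k\|^p$ plus subcritical terms, and under \eqref{rho2} this quantity does \emph{not} grow like $\|u_k\|^r$: if $M(t)\sim ct^{\frac{r-p}{p}}$ with $r>p^\star$, then
$\frac1p\hat M(t)-\frac{1}{p^\star}M(t)t\sim c\bigl(\frac1r-\frac{1}{p^\star}\bigr)t^{\frac{r}{p}}\to-\infty$,
since the coefficient is negative. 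For the model $M(t)=a+bt^{\alpha-1}$ with $\alpha>\frac{N}{N-p}$ this is explicit: the leading term is $b\bigl(\frac{1}{p\alpha}-\frac{1}{p^\star}\bigr)t^\alpha<0$. So the Ambrosetti--Rabinowitz-type truncation you propose is unbounded below and cannot force boundedness of the Palais--Smale sequence. The correct (and simpler) route, which the paper takes, is that \eqref{rho2} makes $\Phi_\lambda$ itself coercive: $\hat M(\|u\|^p)\ge c_1\|u\|^r-c_2$ with $r>p^\star$ dominates both the critical term $\frac{S}{p^\star}\|u\|^{p^\star}$ and the perturbation allowed by \eqref{f1}, so $\Phi_\lambda(u_k)\to c$ alone already yields boundedness, with no use of $\Phi_\lambda'(u_k)$. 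Once you replace your boundedness argument by this coercivity observation, the rest of your proof goes through.
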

	\begin{proof}
		
		Let $\{u_k\}_{k \geq 1}$ be a Palais Smale sequence at a level $c$, i.e. a sequence satisfying the conditions
		\begin{equation*}
			\left\{\begin{array}{lll}
				\ds{\lim_{k \to \infty}\Phi_\lambda(u_k)}&=&c\\
				\ds{\lim_{k \to \infty}\Phi^{'}_\lambda(u_k)}&=&0.
			\end{array}\right.
		\end{equation*}
		
		Since \begin{equation*}
			\ds{\inf_{t > 0}\frac{M(t)}{t^{\frac{p^\star}{p}-1}}>S},
		\end{equation*}
		we may assume there exists a $L>S$ such that
		
		\begin{equation}\label{aab}
			\begin{array}{lr}
			M(t)>Lt^{\frac{p^\star}{p}-1}&\forall t\geq 0.
			\end{array}
		\end{equation}
		Then,
		\begin{equation*}
			\begin{array}{lr}
			\hat{M}(t)\geq L\frac{p}{p^\star}t^{\frac{p^\star}{p}}&\forall t\geq 0.
			\end{array}
		\end{equation*}

		In order to prove the coercivity of the energy functional, we may argue as in the proof of Proposition \eqref{existenceglobalminimizer}. The sequence $\{u_k\}_{k \geq 1}$ is bounded in $W_0^{1,p}(\Omega)$. And, up to subsequences, the following holds true.
		
		\begin{equation*}
			\left\{\begin{array}{lccr}
				u_k \rightharpoonup u&in&W_0^{1,p}(\Omega)& \\
				u_k\rightarrow u&in&L^q(\Omega),&q\in [1,+\infty) \\
				u_k\rightarrow u&a.e \ on&\Omega .&
			\end{array}\right.
		\end{equation*}
		By  the Concentration-Compactness Lemma of Lions
		there exist an at most countable index set $J$,
	 		a set of points $\{x_{j}\}_{j\in J}\subset\overline\Omega$ and two families of positive
	 		numbers $\{\eta_{j}\}_{j\in J}$, $\{\nu_{j}\}_{j\in J}$ such that
	 		\begin{align*}
	 		|\nabla u_{k}|^{p} & \rightharpoonup d\eta\geq|\nabla u|^{p}+\sum_{j\in J}\mu_{j}\delta_{x_{j}},\\
	 		|u_{k}|^{p^*} & \rightharpoonup d\nu=|u|^{p^*}+\sum_{j\in J}\nu_{j}\delta_{x_{j}},
	 		\end{align*}
	 		(weak star convergence in the sense of measures), where $\delta_{x_{j}}$ is the Dirac mass concentrated at
	 		$x_{j}$ and such that
	 		$$		S^{-\frac{p}{p^\star}}  \nu_{j}^{\frac{p}{p^*}}\leq\mu_{j} \qquad \mbox{for every $j\in J$}.$$
We will prove that $J$ is empty. 	
		Assume by contradiction that there exists an index $j_0 \in J$. And, for $\epsilon>0$ define the following smooth function on $\Omega$.

		\begin{equation*}
			\phi_\epsilon(x)=\left\{\begin{array}{crc}
				1, &x\in& B(x_0,\epsilon)\\
				0, &x \in&\Omega \backslash B(x_0,2\epsilon)
			\end{array}\right.
		\end{equation*}
		such that $				|\nabla \phi_\epsilon(x)|\leq \frac{2}{\epsilon}.$
		For each $\epsilon>0$, $\{u_k\phi_\epsilon\}_{k \geq 1}$ is bounded in $W_0^{1,p}(\Omega)$. Therefore,
		\begin{equation*}
			\lim_{k \to +\infty}\Phi_\epsilon^{'}(u_k)(u_k\phi_\epsilon)=0.
		\end{equation*}
		And thus
		\begin{equation}\label{aabb}\begin{array}{lll}
				o(1)&=&M(\|u_k\|^p)\ds{\int_{\Omega}|\nabla u_k|^{p-2}\nabla u_k(\nabla u_k \phi_\epsilon)dx-\int_{\Omega}|u_k|^{p^\star}\phi_\epsilon dx}-\lambda\ds{\int_{\Omega}f(x,u_k)u_k\phi_\epsilon dx}\\
				&=&M(\|u_k\|^p)\ds{\left[\int_{\Omega}|\nabla u_k|^p\phi_\epsilon+u_k|\nabla u_k|^{p-2}\nabla u_k \nabla \phi_\epsilon dx\right]-\int_{\Omega}|u_k|^{p^\star}\phi_\epsilon dx}-\lambda\ds{\int_{\Omega}f(x,u_k)u_k\phi_\epsilon dx}
			\end{array}
		\end{equation}

On the other hand,	by applying the Lebesgue Dominated Convergence Theorem, we may prove that
		\begin{equation*}\label{aa}
				\ds{\lim_{\epsilon \to 0}\lim_{k \to \infty}\int_{\Omega}u_k|\nabla u_k|^{p-2}\nabla u_k \nabla \phi_\epsilon dx}=0,
		\end{equation*}
	\begin{equation*}\label{aabbc}
		\begin{array}{lcl}
			\ds{\lim_{\epsilon \to 0}\lim_{k \to \infty}\int_{\Omega}|u_k|^{p^\star}\phi_\epsilon dx}=\ds{\lim_{\epsilon \to 0}\int_{B(x_0,2\epsilon)}|u_k|^{p^\star}\phi_\epsilon dx +\nu_{j_0}} 			=\nu_{j_0},
			
		\end{array}
	\end{equation*}
and, by condition \eqref{f1}, that

	\begin{equation*}\label{biel}
		\lim_{\epsilon \to 0}\lim_{k\to +\infty}\ds{\int_{\Omega}f(x,u_k)u_k\phi_\epsilon dx}=0.
	\end{equation*}
Since $M(\|u_k\|^p)$ is bounded in $\R$, we deduce that 
		
		\begin{equation*}\label{aabbccd}
			\ds{\lim_{\epsilon \to 0}\lim_{k \to \infty}M(\|u_k\|^p)\int_{\Omega}u_k|\nabla u_k|^{p-2}\nabla u_k \nabla \phi_\epsilon dx}=0.
		\end{equation*}
		
		
				
	
	

Furthermore, we have that 
\begin{equation*}\label{aabbcc}
			\begin{array}{lcl}
				\ds{ \lim_{k \to \infty} M(\|u_k\|^p)\int_{\Omega}|\nabla u_k|^p\phi_\epsilon dx }& \geq & \ds{\lim_{k \to \infty}\left[  M\left(\int_{\Omega}|\nabla u_k|^p dx\right)\int_{B(x_0,2\epsilon)}|\nabla u_k|^p\phi_\epsilon dx \right] }\\
				&\geq&\ds{\lim_{k \to \infty} \left[L \left(\int_{B(x_0,2\epsilon)}|\nabla u_k|^p\phi_\epsilon dx\right)^{\frac{p^\star}{p}-1}\int_{B(x_0,2\epsilon)}|\nabla u_k|^p\phi_\epsilon dx  \right]}\\
				&\geq&L\left[\ds{\int_{B(x_0,2\epsilon)}|\nabla u|^p\phi_\epsilon dx +\mu_{j_0}}\right]^{\frac{p^\star}{p}}.
			\end{array}
		\end{equation*}
		The above outcomes, combined together give us that

		\begin{equation*}
			\begin{array}{lcl}
				0\geq L\mu_{j_0}^{\frac{p^\star}{p}}-\nu_{j_0}
				=\left(L-S\right)\mu_{j_0}^{\frac{p^\star}{p}}
				\geq0,
			\end{array}
		\end{equation*}
		which means that $\mu_{j_0}=0$ and, subsequently that $\nu_{j_0}=0$, a contradiction. Thus, $J=\emptyset$ and so 
		\begin{equation}\label{layla}
			\ds{\lim_{k \to \infty}\int_{\Omega}|u_k|^{p^\star}dx=\int_{\Omega}|u|^{p^\star}dx},
		\end{equation}
		which implies that  $\{u_k\}$ converges to $u \in L^{p^\star}$ strongly.
		
		Let us  finally prove that $u_k\to u$ strongly in $W_0^{1,p}(\Omega)$. We already know by hypothesis that
		\begin{equation*}\label{aaab}
			\begin{array}{lcl}
				\ds 0={\lim_{k \to \infty}\Phi^{'}_\lambda(u_k)(u_k-u)}&=&\ds{\lim_{k \to \infty}\left[M(\|u_k\|^p)\ds{\int_{\Omega}|\nabla u_k|^{p-2}\nabla u_k\nabla (u_k-u)}dx-\int_{\Omega}|u_k|^{p^\star-2}u_k(u_k-u)dx\right.}\\
				&&\left.-\lambda\ds{\int_{\Omega}f(x,u_k)(u_k-u)dx}\right],
			\end{array}
		\end{equation*}
		

and, by using  condition \eqref{f1} and \eqref{layla} we obtain that
\begin{equation*}\label{bibi}
	\ds{\lim_{k\to +\infty}\int_{\Omega}f(x,u_k)(u_k-u)dx=0},
\end{equation*}

\begin{equation*}\label{hum}
	\ds{\lim_{k \to \infty}\int_{\Omega}|u_k|^{p^\star-2}u_k(u_k-u)dx}=0.
\end{equation*}
	
	Thus, 	
		\begin{equation*}\label{aaabc}
			\ds{\lim_{k \to \infty}M(\|u_k\|^p)\int_{\Omega}|\nabla u_k|^{p-2}\nabla u_k\nabla(u_k-u)dx}=0.
		\end{equation*}
		
		If $\ds{\lim_{k \to \infty}M(\|u_k\|^p)}=0$, then  from \eqref{aab} one has that $\ds{\lim_{k \to \infty}\|u_k\|=0 }$, which means that $u_k\rightarrow 0$ strongly  in $ W_0^{1,p}(\Omega)$. Otherwise, $\ds{\limsup_{k \to \infty}M(\|u_k\|^p)}>0$, that implies
		\begin{equation*}\label{aaabb}
			\ds{\lim_{k \to \infty}\int_{\Omega}|\nabla u_k|^{p-2}\nabla u_k\nabla(u_k-u)dx=0}.
		\end{equation*}
		Since $\{u_k\}_{k \geq 1}$ converges weakly to $u$, we know that
		\begin{equation*}\label{aaabbc}
			\ds{\lim_{k \to \infty}\int_{\Omega}|\nabla u|^{p-2}\nabla u\nabla(u_k-u)dx=0}.
		\end{equation*}
	Then, we deduce that		
		\begin{equation*}
			\ds{\lim_{k \to \infty}\int_{\Omega}\left(|\nabla u_k|^{p-2}\nabla u_k-|\nabla u|^{p-2}\nabla u\right)\nabla(u_k-u)dx=0},
		\end{equation*}
		and as a consequence, we obtain that $u_k \rightarrow u$ in  $W_0^{1,p}(\Omega)$.
		\end{proof}
	Let us point out now that the energy functional complies with the mountain pass geometry.  Notice that when $\lambda\geq \lambda_0^\star$ this is obvious since $0$ is a strict local minimizer (see Lemma \ref{geometry}) and $u_\lambda$ is a global minimizer with energy level less or equal than zero. Thus, the interesting case is when  $\lambda_0^\star-\epsilon<\lambda<\lambda_0^\star$.
	
\begin{lem}\label{geometry}
		 Under condition \eqref{beta3} the following statements hold true.

	\begin{itemize}
	
		\item[(i)] For $R>0$ small enough there exists  $\sigma=\sigma(R)>0$ such that $\Phi_{\lambda}(u)\geq \sigma$ for all $u \in W_0^{1,p}(\Omega)$ with $\|u\|=R$;
		
		\item[(ii)] For $0<R<\|u_{\lambda_0^*}\|$ in item $(i)$, there exists $\epsilon>0$ small enough such that $\Phi_{\lambda}(u_{\lambda_0^*})< \sigma$ for all $\lambda>\lambda_0^{\star}-\epsilon$.
	\end{itemize}
\end{lem}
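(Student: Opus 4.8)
The statement is the pair of standard mountain--pass geometry conditions for $\Phi_\lambda$: part (i) builds a ``wall'' of positive height on a small sphere around the origin, and part (ii) checks that the far point $u_{\lambda_0^\star}$ sits below that wall once $\lambda$ is close to $\lambda_0^\star$. Recall that $u_{\lambda_0^\star}$ is a global minimizer of $\Phi_{\lambda_0^\star}$ with $\Phi_{\lambda_0^\star}(u_{\lambda_0^\star})=I_{\lambda_0^\star}=0$ and $u_{\lambda_0^\star}\neq0$, by Proposition~\ref{existencesolutioncriticalcasenonzero} (applicable because \eqref{beta3} forces \eqref{beta1} to hold with strict inequality). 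I would prove the two items in turn, taking care to make the estimate in (i) uniform for $\lambda\le\Lambda$ with $\Lambda:=\lambda_0^\star+1$ fixed, so that the resulting $\sigma$ can be reused in (ii).

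\emph{Part (i).} Starting from
\[
\Phi_\lambda(u)=\frac1p\hat M(\|u\|^p)-\frac1{p^\star}\|u\|_{p^\star}^{p^\star}-\lambda\int_\Omega F(x,u)\,dx ,
\]
I would use the non-degeneracy \eqref{rho1} and the continuity of $M$ to get $m_0>0$, $t_1>0$ with $M\ge m_0$ on $[0,t_1]$, hence $\hat M(t)\ge m_0 t$ there and $\tfrac1p\hat M(\|u\|^p)\ge\tfrac{m_0}{p}\|u\|^p$ whenever $\|u\|^p\le t_1$; bound the critical term by $\|u\|_{p^\star}^{p^\star}\le S\|u\|^{p^\star}$; and bound the perturbation by the inequality already recorded in the excerpt (a consequence of \eqref{f1} and \eqref{f2}), $|F(x,t)|\le\varepsilon|t|^p+c_\varepsilon|t|^q$ with $q\in(p,p^\star)$, combined with $\|u\|_p^p\le C_p\|u\|^p$ and $\|u\|_q^q\le C_q\|u\|^q$. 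For $\|u\|=R$ with $R^p\le t_1$ this gives
\[
\Phi_\lambda(u)\ \ge\ \Big(\frac{m_0}{p}-\lambda\varepsilon C_p\Big)R^p-\frac{S}{p^\star}R^{p^\star}-\lambda c_\varepsilon C_q R^{q}.
\]
The decisive point is the order of the choices: first fix $\varepsilon$ so small that $\lambda\varepsilon C_p\le\frac{m_0}{2p}$ for all $\lambda\le\Lambda$, and only then send $R\to0$; since $p<q<p^\star$, the bracket $\frac{m_0}{2p}-\frac{S}{p^\star}R^{p^\star-p}-\lambda c_\varepsilon C_q R^{q-p}$ tends to $\frac{m_0}{2p}>0$, so for all sufficiently small $R>0$ one has $\Phi_\lambda(u)\ge\frac{m_0}{4p}R^p=:\sigma(R)>0$ on $\{\|u\|=R\}$, uniformly for $\lambda\le\Lambda$. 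Any such $R$ with in addition $R<\|u_{\lambda_0^\star}\|$ will serve for part (ii); this is a non-empty range since $u_{\lambda_0^\star}\neq0$.

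\emph{Part (ii).} Here I would exploit the affine dependence of $\Phi_\lambda$ on $\lambda$:
\[
\Phi_\lambda(u_{\lambda_0^\star})=\Phi_{\lambda_0^\star}(u_{\lambda_0^\star})+(\lambda_0^\star-\lambda)\,c=(\lambda_0^\star-\lambda)\,c,\qquad c:=\int_\Omega F(x,u_{\lambda_0^\star})\,dx,
\]
using $\Phi_{\lambda_0^\star}(u_{\lambda_0^\star})=0$. Since \eqref{f3} yields $F(x,t)>0$ for every $t\neq0$ and $u_{\lambda_0^\star}\neq0$, we have $c>0$ (this also justifies $\|u_{\lambda_0^\star}\|>0$). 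If $\lambda\ge\lambda_0^\star$ then $\Phi_\lambda(u_{\lambda_0^\star})\le0<\sigma$; if $\lambda<\lambda_0^\star$ then $\Phi_\lambda(u_{\lambda_0^\star})=(\lambda_0^\star-\lambda)c<\sigma$ exactly when $\lambda_0^\star-\lambda<\sigma/c$. Hence $\epsilon:=\min\{\sigma/c,\ \Lambda-\lambda_0^\star\}$ works: for every $\lambda>\lambda_0^\star-\epsilon$ one has $\Phi_\lambda(u_{\lambda_0^\star})<\sigma$, with $\sigma$ the height produced in (i) for that range of $\lambda$.

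\emph{Expected difficulty.} There is no real obstacle. Part (ii) is the one-line affine identity above together with the strict positivity of $\int_\Omega F(x,u_{\lambda_0^\star})\,dx$. The only delicate point in part (i) is to absorb the $\|u\|^p$-term of the perturbation into the positive $\|u\|^p$-term coming from the non-degeneracy \eqref{rho1}, i.e.\ to choose $\varepsilon$ small (uniformly for $\lambda\le\Lambda$) \emph{before} shrinking $R$; note that \eqref{beta3} is not what makes (i) work near the origin --- it could be used to cancel the critical term via $\hat M(t)\ge L\frac{p}{p^\star}t^{p^\star/p}$ with $L>S$, but the positive lower bound on small spheres is supplied by \eqref{rho1}.
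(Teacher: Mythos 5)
Your proof is correct and follows essentially the same route as the paper: part (i) uses \eqref{rho1} to get a positive $\|u\|^p$-lower bound on $\hat M$ near the origin, the Sobolev constant $S$ for the critical term, and the smallness of $F$ near zero from \eqref{f1}--\eqref{f2}, while part (ii) rests on $\Phi_{\lambda_0^\star}(u_{\lambda_0^\star})=0$ and the monotone (indeed affine) dependence of $\lambda\mapsto\Phi_\lambda(u_{\lambda_0^\star})$ with $\int_\Omega F(x,u_{\lambda_0^\star})\,dx>0$. If anything, you are slightly more careful than the paper in retaining the $|t|^q$-term of the perturbation explicitly and in making $\sigma$ uniform over a range of $\lambda$, but the underlying argument is the same.
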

\begin{proof}
	$(i)$
	By \eqref{rho1} there exists $C>0$ and $\delta>0$ such that if $\|u\|<\delta$ then
	\begin{equation*}
		\hat{M}(\|u\|^p)>C \|u\|^p.
	\end{equation*}
Also, from \eqref{f2} we deduce that for fixed  $\epsilon>0$ we may choose $\delta>0$ in such a way that $\ds{\int_{\Omega}F(x,u)dx}\leq \epsilon \|u\|^p$ for $\|u\|<\delta$.

Consequently, the following inequality holds true:
\begin{equation*}
		\Phi_{\lambda}(u)\geq \left(\frac{C}{p}-\lambda \epsilon\right)\|u\|^p-\frac{1}{p^\star}\|u\|_{p^\star}^{p^\star}\geq \|u\|^p\left[\left(\frac{C}{p}-\lambda \epsilon\right)-\frac{S}{p^\star}\|u\|^{p^\star-p}\right].
	\end{equation*}
Therefore, if we take  $\epsilon>0$ such  that $\frac{C}{p}-\lambda \epsilon>0$, and take $R< \delta=\delta(\epsilon)$ small enough so that, for all $u \in W_0^{1,p}(\Omega)$ such that $\|u\|=R$, there holds $\frac{S}{p^\star}\|u\|^{p^\star-p}<\frac{C}{p}-\lambda \epsilon$, we are led into the desired conclusion.

$(ii)$ From Proposition \eqref{existenceglobalminimizer},  $\Phi_{\lambda_0^\star}(u_{\lambda_0^\star})=0$. Therefore, the monotonicity of the function  $\lambda \mapsto \Phi_{\lambda}(u_{\lambda_0^\star})$ ensures  that $\Phi_{\lambda}(u_{\lambda_0^\star})\leq0$ for all $\lambda\geq\lambda_0^\star$, and that $0<\Phi_\lambda(u_{\lambda_0^{\star}})<\sigma$ for $\lambda_0^\star-\epsilon<\lambda<\lambda_0^\star$ with $\epsilon>0$ small enough.	
\end{proof}	

{\bf Proof of Theorem \ref{main3}.} Define
\begin{equation*}
	c_\lambda=\inf_{g \in \Gamma} \ \max_{0\leq t \leq 1}\Phi_{\lambda_0^\star}(g(t)),
\end{equation*}
where
\begin{equation*}
	\Gamma:=\left\{g \in C\left( [0,1];W_0^{1,p}(\Omega)\right) \ | \ g(0)=0, g(1)=u_{\lambda_0^\star}\right\}.
\end{equation*}
By Lemmas \ref{PS}, \ref{geometry} and  the  Mountain Pass Theorem it follows  that 
for each  $\lambda > \lambda_0^\star-\epsilon$, the set
	\begin{equation*}
		K_{c_\lambda}=\left\{u \in W_0^{1,p}(\Omega) \ | \ \Phi_{\lambda}(u)=c_\lambda, \ \Phi^{'}_\lambda(u)=0\right\}
	\end{equation*}
is not empty.
\qed
	
\section{Non-Existence Results}
In this section we establish a non-existence result under condition \eqref{beta3}, which is a stronger hypothesis than $\eqref{beta1}$. We also assume that $M $ is of class $C^1(\R)$, and that $f(x,\cdot)\in C^1(\R)$ for all $x\in \Omega$. We recall that hypotheses \eqref{rho1} and \eqref{rho2}  hold true. 

For each $u \in W_0^{1,p}(\Omega)\setminus\{0\}$ consider the following system:
\begin{equation}\label{systemlambdatderivative}\left\{\begin{array}{l}
		\psi^{'}_{\lambda,u}(t)=0\\
		\psi^{''}_{\lambda,u}(t)= 0\\
		\psi^{'}_{\lambda,u}(t)=\inf_{s > 0}\psi^{'}_{\lambda,u}(s).
	\end{array}\right.
\end{equation}

\begin{lem} Assume conditions \eqref{f1}, \eqref{f2} and  \eqref{beta3} .
	Then, for each $u \in W_0^{1,p}(\Omega)\backslash\{0\}$ system \eqref{systemlambdatderivative} possesses a  solution $(\lambda_1(u),t_1(u))$ which is unique with respect to $\lambda$. 
\end{lem}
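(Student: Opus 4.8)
The plan is to run the argument of Lemma~\ref{systemlambdatsolutionexistence} with the fiber $\psi_{\lambda,u}$ replaced by its derivative $\psi'_{\lambda,u}$, the role played there by \eqref{beta1} being taken over here by the stronger condition \eqref{beta3}. Two elementary facts about
\[
\psi'_{\lambda,u}(t)=M\big(t^p\|u\|^p\big)t^{p-1}\|u\|^p-t^{p^\star-1}\|u\|_{p^\star}^{p^\star}-\lambda\int_\Omega f(x,tu)u\,dx
\]
drive the proof. First, for every fixed $t>0$ the map $\lambda\mapsto\psi'_{\lambda,u}(t)$ is \emph{strictly} decreasing, because $\int_\Omega f(x,tu)u\,dx>0$ by the sign condition \eqref{f3} (recall $u\not\equiv 0$). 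Second, factoring out $t^{p^\star-1}\|u\|^{p^\star}$ and writing $s=t^p\|u\|^p$,
\[
\psi'_{0,u}(t)=t^{p^\star-1}\|u\|^{p^\star}\left[\frac{M(s)}{s^{\frac{p^\star}{p}-1}}-\frac{\|u\|_{p^\star}^{p^\star}}{\|u\|^{p^\star}}\right]>0\qquad\text{for every }t>0,
\]
since $\|u\|_{p^\star}^{p^\star}/\|u\|^{p^\star}<S<\inf_{s>0}M(s)/s^{\frac{p^\star}{p}-1}$ by \eqref{beta3} and the fact that $S$ in \eqref{azzz} is not attained.

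Consequently the set $\Lambda_u^1:=\{\lambda\ge 0:\ \psi'_{\lambda,u}(t)\ge 0\ \text{for all }t\ge 0\}$ contains $0$, hence is non-empty; by the first observation it is closed (a pointwise limit of non-negative functions is non-negative) and bounded from above (for fixed $t_0>0$ one has $\psi'_{\lambda,u}(t_0)\to-\infty$ as $\lambda\to+\infty$), so in fact $\Lambda_u^1=[0,\lambda_1(u)]$ with $\lambda_1(u):=\max\Lambda_u^1\ge 0$.

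To produce the second coordinate I fix $\epsilon_0>0$ and, for $0<\epsilon\le\epsilon_0$, use that $\lambda_1(u)+\epsilon\notin\Lambda_u^1$: the function $\psi'_{\lambda_1(u)+\epsilon,u}$ is $C^1$ (here the hypotheses $M\in C^1$ and $f(x,\cdot)\in C^1$ enter), is positive near $0$ and tends to $+\infty$ by Lemma~\ref{fibersfirstproperties}, and takes negative values, so it attains a negative minimum on $(0,+\infty)$; let $t_1(\epsilon)$ be the smallest global minimizer. Then $\psi''_{\lambda_1(u)+\epsilon,u}(t_1(\epsilon))=0$ and $\psi'_{\lambda_1(u)+\epsilon,u}(t_1(\epsilon))=\inf_{s>0}\psi'_{\lambda_1(u)+\epsilon,u}(s)<0$, and by monotonicity in $\lambda$ together with Lemma~\ref{fibersfirstproperties} the point $t_1(\epsilon)$ stays in a fixed compact interval $[c,T]\subset(0,+\infty)$: it lies above the first zero of $\psi'_{\lambda_1(u)+\epsilon_0,u}$ and below any level beyond which that function is positive. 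Setting $t_1(u):=\liminf_{\epsilon\to 0}t_1(\epsilon)\in[c,T]$ and passing to the limit along a subsequence realizing the $\liminf$: joint continuity of $(\lambda,t)\mapsto\psi''_{\lambda,u}(t)$ gives $\psi''_{\lambda_1(u),u}(t_1(u))=0$; taking the $\liminf$ of $\psi'_{\lambda_1(u)+\epsilon,u}(t_1(\epsilon))<0$ gives $\psi'_{\lambda_1(u),u}(t_1(u))\le 0$, which together with $\lambda_1(u)\in\Lambda_u^1$ forces $\psi'_{\lambda_1(u),u}(t_1(u))=0$; and since $\psi'_{\lambda_1(u),u}\ge 0$ on $[0,+\infty)$, the point $t_1(u)$ is a global minimizer of $\psi'_{\lambda_1(u),u}$, so the third equation of \eqref{systemlambdatderivative} holds as well. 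Hence $(\lambda_1(u),t_1(u))$ solves the system, and uniqueness with respect to $\lambda$ follows verbatim from the proof of Lemma~\ref{systemlambdatsolutionexistence}: if $(\lambda_1(u),t_1(u))$ and $(\lambda_1'(u),t_1'(u))$ both solve \eqref{systemlambdatderivative} with, say, $\lambda_1'(u)\ge\lambda_1(u)$, then $0\le\psi'_{\lambda_1'(u),u}(t_1(u))\le\psi'_{\lambda_1(u),u}(t_1(u))=0$ and the strict monotonicity of $\lambda\mapsto\psi'_{\lambda,u}(t_1(u))$ yields $\lambda_1'(u)=\lambda_1(u)$.

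The genuinely delicate points, in my view, are: (a) the non-emptiness of $\Lambda_u^1$, which is exactly where \eqref{beta3} — and not merely \eqref{beta1} — is indispensable, through the strict positivity of $\psi'_{0,u}$ on $(0,+\infty)$; (b) trapping $t_1(\epsilon)$ in a fixed compact subinterval of $(0,+\infty)$, so that $t_1(u)$ is a true interior point and the limit procedure is meaningful; and (c) the passage to the limit in the equation $\psi''=0$, which is the reason the $C^1$ regularity of $M$ and of $f(x,\cdot)$ is imposed in this section although it was not needed for the corresponding statement about $\psi_{\lambda,u}$.
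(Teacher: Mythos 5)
Your proof is correct and is precisely the adaptation the paper intends: the paper's own proof is the one-line remark that the argument of Lemma \ref{systemlambdatsolutionexistence} carries over with $\psi_{\lambda,u}$ replaced by $\psi'_{\lambda,u}$ and \eqref{beta1} by \eqref{beta3}, and your write-up carries out exactly that (non-emptiness of the parameter set via the strict positivity of $\psi'_{0,u}$, the limiting construction of $t_1(\epsilon)$, and the same monotonicity-based uniqueness argument). The only caveat — shared with the paper's own proof of Lemma \ref{systemlambdatsolutionexistence} — is that the strict monotonicity of $\lambda\mapsto\psi'_{\lambda,u}(t)$, on which both the structure of $\Lambda_u^1$ and the uniqueness step rest, tacitly uses the sign condition \eqref{f3}, which is not listed among the stated hypotheses.
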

\begin{proof}
	The proof is similar to Lemma \eqref{systemlambdatsolutionexistence}.
\end{proof}

\begin{lem}\label{lambdaonelessthanlambdazero}
Assume conditions \eqref{f1}, \eqref{f2} and  \eqref{beta3}. Then, for all $u \in W_0^{1,p}(\Omega)\backslash \{0\}$ there holds $\lambda_1(u)<\lambda_0(u)$.
\end{lem}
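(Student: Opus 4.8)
The plan is to play the fiber $\psi_{\lambda,u}$ off against its derivative $\psi'_{\lambda,u}$, using that both depend \emph{affinely} on $\lambda$: in $\psi_{\lambda,u}(t)$ the coefficient of $-\lambda$ is $\int_{\Omega}F(x,tu)\,dx$, which is strictly positive for $t>0$ because, by \eqref{f3}, $F(x,s)>0$ whenever $s\neq 0$ (and $u\not\equiv 0$).

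\emph{Step 1: the critical fiber $\psi_{\lambda_1(u),u}$ is strictly positive on $(0,+\infty)$.} From the last two equations of system \eqref{systemlambdatderivative} one reads that $\psi'_{\lambda_1(u),u}$ attains on $(0,+\infty)$ a global minimum whose value is $0$; hence $\psi'_{\lambda_1(u),u}(s)\geq 0$ for every $s>0$. Under the standing regularity assumptions of this section $\psi_{\lambda_1(u),u}$ is $C^1$ on $(0,+\infty)$, so this sign condition forces $t\mapsto\psi_{\lambda_1(u),u}(t)$ to be non-decreasing on $[0,+\infty)$. Since $\psi_{\lambda_1(u),u}(0)=\Phi_{\lambda_1(u)}(0)=0$ and, by item $(i)$ of Lemma \ref{fibersfirstproperties}, $\psi_{\lambda_1(u),u}>0$ on the interval $(0,\epsilon_1)$ associated to that lemma, monotonicity propagates this positivity: $\psi_{\lambda_1(u),u}(t)>0$ for all $t>0$.

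\emph{Step 2: comparison at $t_0(u)$.} Let $(\lambda_0(u),t_0(u))$ be the solution of system \eqref{systemlambdat} provided by Lemma \ref{systemlambdatsolutionexistence}, so that $\psi_{\lambda_0(u),u}(t_0(u))=0$; recall from the construction there that $t_0(u)>0$, as it dominates the first root of a fiber. Then, by Step 1 and the affine dependence on $\lambda$,
\begin{align*}
0<\psi_{\lambda_1(u),u}(t_0(u))&=\psi_{\lambda_0(u),u}(t_0(u))+\big(\lambda_0(u)-\lambda_1(u)\big)\int_{\Omega}F(x,t_0(u)u)\,dx\\
&=\big(\lambda_0(u)-\lambda_1(u)\big)\int_{\Omega}F(x,t_0(u)u)\,dx.
\end{align*}
Since $\int_{\Omega}F(x,t_0(u)u)\,dx>0$ (again by \eqref{f3}, because $t_0(u)>0$ and $u\not\equiv 0$), we conclude $\lambda_0(u)-\lambda_1(u)>0$, which is exactly the claim.

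The argument is short and I do not foresee a genuine obstacle; the only points needing care are bookkeeping ones. One must check that $\psi_{\lambda_1(u),u}$ is indeed differentiable on $(0,+\infty)$, so that "$\psi'_{\lambda_1(u),u}\geq 0$" truly yields monotonicity — this rests on $M$ being continuous (hence $\hat M\in C^1$) and on the subcritical growth \eqref{f1} (so that $t\mapsto\int_{\Omega}F(x,tu)\,dx$ is $C^1$ with derivative $\int_\Omega f(x,tu)u\,dx$). One must also ensure that the strict inequality $\psi_{\lambda_1(u),u}(t_0(u))>0$ holds in all cases: it is immediate from Lemma \ref{fibersfirstproperties}$(i)$ when $t_0(u)<\epsilon_1$, and is obtained by monotonicity from a point of $(0,\epsilon_1)$ when $t_0(u)\geq\epsilon_1$.
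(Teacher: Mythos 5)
Your proof is correct and is in essence the paper's own argument run in the direct rather than the contradictory direction: both hinge on the fact that $\psi'_{\lambda_1(u),u}\geq 0$ (read off from system \eqref{systemlambdatderivative}) forces the fiber $\psi_{\lambda_1(u),u}$ to be non-decreasing, hence strictly positive on $(0,+\infty)$ by Lemma \ref{fibersfirstproperties}(i), which is incompatible with the vanishing of $\psi_{\lambda_0(u),u}$ at $t_0(u)>0$ unless $\lambda_1(u)<\lambda_0(u)$. The paper argues by contradiction, transferring the sign of the derivative from $\lambda_1(u)$ to $\lambda_0(u)$ via monotonicity in $\lambda$, whereas you evaluate at $t_0(u)$ and exploit the affine dependence on $\lambda$ together with $\int_\Omega F(x,t_0(u)u)\,dx>0$; the content is the same.
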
	
\begin{proof}
	Let us perform a proof by contradiction. Suppose there exists a $u \in W_0^{1,p}(\Omega)\backslash \{0\}$ such that $\lambda_0(u)\leq\lambda_1(u)$. Then, $\psi^{'}_{\lambda_0(u),u}(t)\geq\psi^{'}_{\lambda_1(u),u}(t)\geq0$ for all $t\geq0$. Therefore, $\psi_{\lambda_0(u),u}$ is non-decreasing over $[0,+\infty)$. However, by Lemma \ref{fibersfirstproperties} we know that $\psi_{\lambda_0(u),u}(t)>0$ for $t>0$ small enough, $\psi_{\lambda_0(u),u}(t_0)=0$ for some $t_0 \in (0,+\infty)$, and that $\lim_{t\to +\infty}\psi_{\lambda_0(u),u}(t)=+\infty$. The proof is complete.
\end{proof}

	Let us define the extremal parameter	
	\begin{equation*}
		\lambda^{\star}_1:=\inf_{u \in W_0^{1,p}(\Omega)\backslash\{0\}}\lambda_1(u).
	\end{equation*}

Notice that by Lemma \ref{lambdaonelessthanlambdazero}, $\lambda_1^\star\leq\lambda_0^\star$.

\begin{lem}
Assume conditions \eqref{f1}, \eqref{f2} 	and  \eqref{beta1} with  strict inequality. Then,  $\lambda_0^{\star}=\lambda_0(u_{\lambda_0^{\star}})$, where $u_{\lambda_0^{\star}}$ is as in  Proposition \eqref{existencesolutioncriticalcasenonzero}.
\end{lem}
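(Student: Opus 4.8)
The goal is to show that, for the global minimizer $u_{\lambda_0^\star} \in W_0^{1,p}(\Omega)\setminus\{0\}$ produced by Proposition \ref{existencesolutioncriticalcasenonzero} (so that $\Phi_{\lambda_0^\star}(u_{\lambda_0^\star})=0$), one has $\lambda_0(u_{\lambda_0^\star})=\lambda_0^\star$. One inequality is free: by the very definition \eqref{extremalparameterdefinition} of the extremal parameter, $\lambda_0^\star \le \lambda_0(u_{\lambda_0^\star})$. So the whole content is the reverse inequality $\lambda_0(u_{\lambda_0^\star}) \le \lambda_0^\star$, equivalently $\lambda_0(u_{\lambda_0^\star}) = \lambda_0^\star$ since the other direction is automatic.

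My plan is to exploit that $\Phi_{\lambda_0^\star}(u_{\lambda_0^\star})=0$ translates into $\psi_{\lambda_0^\star, u_{\lambda_0^\star}}(1)=0$, and that $u_{\lambda_0^\star}$ is a \emph{global} minimizer, so $\Phi_{\lambda_0^\star}(v)\ge 0 = \Phi_{\lambda_0^\star}(u_{\lambda_0^\star})$ for every $v$; in particular, taking $v = t u_{\lambda_0^\star}$ for $t>0$ gives $\psi_{\lambda_0^\star, u_{\lambda_0^\star}}(t)\ge 0$ for all $t\ge 0$. Thus $t=1$ is simultaneously a zero and a global minimum of the fiber $\psi_{\lambda_0^\star, u_{\lambda_0^\star}}$; since it is a minimum of a $C^1$ function on $(0,+\infty)$ attained at an interior point, it is also a critical point, i.e. $\psi'_{\lambda_0^\star, u_{\lambda_0^\star}}(1)=0$. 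Hence the pair $(\lambda_0^\star, 1)$ solves system \eqref{systemlambdat} for $u = u_{\lambda_0^\star}$:
\begin{equation*}
\psi_{\lambda_0^\star, u_{\lambda_0^\star}}(1)=0, \qquad \psi'_{\lambda_0^\star, u_{\lambda_0^\star}}(1)=0, \qquad \psi_{\lambda_0^\star, u_{\lambda_0^\star}}(1)=\inf_{s>0}\psi_{\lambda_0^\star, u_{\lambda_0^\star}}(s).
\end{equation*}
By Lemma \ref{systemlambdatsolutionexistence}, the solution of \eqref{systemlambdat} is unique with respect to the first coordinate, and that first coordinate is by definition $\lambda_0(u_{\lambda_0^\star})$. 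Therefore $\lambda_0(u_{\lambda_0^\star}) = \lambda_0^\star$, which is the claim.

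The one point that needs a little care — and which I expect to be the main (mild) obstacle — is justifying that $t=1$ is genuinely an interior point of $[0,+\infty)$ where the infimum is attained, rather than the infimum being approached only as $t\to 0^+$ or $t\to +\infty$. Here $t=1>0$ is interior, and by Lemma \ref{fibersfirstproperties}(ii) (using \eqref{rho2}) we have $\psi_{\lambda_0^\star, u_{\lambda_0^\star}}(t)\to +\infty$ as $t\to +\infty$, while $\psi_{\lambda_0^\star, u_{\lambda_0^\star}}(0)=0=\psi_{\lambda_0^\star, u_{\lambda_0^\star}}(1)$; combined with $\psi_{\lambda_0^\star, u_{\lambda_0^\star}}\ge 0$ everywhere, this confirms that the value $0$ at $t=1$ is indeed the global infimum over $(0,+\infty)$ and that differentiability at $t=1$ forces $\psi'_{\lambda_0^\star, u_{\lambda_0^\star}}(1)=0$. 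After that, the conclusion is an immediate appeal to the uniqueness statement in Lemma \ref{systemlambdatsolutionexistence} together with \eqref{extremalparameterdefinition}.
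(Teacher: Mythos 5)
Your argument is correct. Where the paper proves the identity by a direct chain of inequalities,
\begin{equation*}
0=\Phi_{\lambda_0^\star}(u_{\lambda_0^\star})\geq \Phi_{\lambda_0(u_{\lambda_0^\star})}(u_{\lambda_0^\star})=\psi_{\lambda_0(u_{\lambda_0^\star}),u_{\lambda_0^\star}}(1)\geq \psi_{\lambda_0(u_{\lambda_0^\star}),u_{\lambda_0^\star}}(t_0(u_{\lambda_0^\star}))=0,
\end{equation*}
and then cancels $\int_\Omega F(x,u_{\lambda_0^\star})\,dx$ from the resulting equality $\Phi_{\lambda_0^\star}(u_{\lambda_0^\star})=\Phi_{\lambda_0(u_{\lambda_0^\star})}(u_{\lambda_0^\star})$, you instead observe that the pair $(\lambda_0^\star,1)$ itself solves system \eqref{systemlambdat} for $u=u_{\lambda_0^\star}$ (using $\Phi_{\lambda_0^\star}\geq 0$ on the whole space, $\Phi_{\lambda_0^\star}(u_{\lambda_0^\star})=0$, and Fermat's rule at the interior minimum $t=1$) and then invoke the uniqueness-in-$\lambda$ clause of Lemma \ref{systemlambdatsolutionexistence}. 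The two routes rest on the same ingredients --- the monotonicity of $\lambda\mapsto\Phi_\lambda(v)$ and the strict positivity of $\int_\Omega F(x,v)\,dx$ for $v\neq 0$; the latter is what makes the paper's cancellation, respectively the uniqueness proof you cite, actually work, and it tacitly requires \eqref{f3}, which neither the lemma's statement nor your write-up lists, an omission you simply inherit from the paper. Your version is tidier in that it reuses an already-established uniqueness statement instead of redoing the computation; the small price is that you must verify all three equations of \eqref{systemlambdat}, including $\psi'_{\lambda_0^\star,u_{\lambda_0^\star}}(1)=0$, which you do correctly since $t=1$ is an interior global minimizer of the differentiable fiber map.
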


\begin{proof}Let $u:=u_{\lambda_0^{\star}}$. From the definition of $\lambda_0(u)$ and $\lambda_0^\star$,
	\begin{equation*}
		\begin{array}{lrlccl}
0=&I_{\lambda_0^{\star}}&=&\Phi_{\lambda_0^\star}(u)&\geq&\Phi_{\lambda_0(u)}(u) \\
		&&=&\psi_{\lambda_0(u),u}(1)&\geq&\psi_{\lambda_0(u),u}(t_0(u))\\
		&&=&0.&&
		\end{array}
	\end{equation*}
Therefore, $\Phi_{\lambda_0^\star}(u)=\Phi_{\lambda_0(u)}(u)$ or             \begin{equation*}
             \begin{array}{c}
             \ds{\frac{1}{p}\hat{M} (\| u \|^p)-\frac{1}{p^\star}\| u \|^{p^\star}_{p^\star}-\lambda_0^\star\int_{\Omega}F(x,u(x))dx=\frac{1}{p}\hat{M} (\| u \|^p)-\frac{1}{p^\star}\| u \|^{p^\star}_{p^\star}-\lambda_0(u)\int_{\Omega}F(x,u(x))dx}.	
             \end{array}
         \end{equation*}		
 Therefore, $\lambda_0^\star=\lambda_0(u_{\lambda_0^\star})$.
\end{proof}

\begin{lem} Assume conditions \eqref{f1}, \eqref{f2} and  \eqref{beta3}. Then, 
$\lambda_1^{\star}<\lambda_0^{\star}$.
\end{lem}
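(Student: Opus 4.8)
The plan is to obtain the strict inequality by chaining the two lemmas immediately preceding it. First I would observe that condition \eqref{beta3} forces condition \eqref{beta1} to hold with \emph{strict} inequality: picking $L>S$ with $M(t)\geq L t^{\frac{p^\star}{p}-1}$ for all $t\geq 0$ (possible by \eqref{beta3} together with the continuity of $M$), integration gives $\hat M(t)\geq L\frac{p}{p^\star}t^{\frac{p^\star}{p}}$ for all $t\geq 0$, whence $\inf_{t>0}\hat M(t)/t^{\frac{p^\star}{p}}\geq L\frac{p}{p^\star}>S\frac{p}{p^\star}$. In particular Proposition \ref{existencesolutioncriticalcasenonzero} applies and provides a nontrivial minimizer $u_{\lambda_0^\star}\in W_0^{1,p}(\Omega)\setminus\{0\}$ with $\Phi_{\lambda_0^\star}(u_{\lambda_0^\star})=0$, and the lemma just stated yields the identity $\lambda_0^\star=\lambda_0(u_{\lambda_0^\star})$.

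Then I would simply invoke Lemma \ref{lambdaonelessthanlambdazero} with the test function $u=u_{\lambda_0^\star}$: since $u_{\lambda_0^\star}\neq 0$, that lemma gives $\lambda_1(u_{\lambda_0^\star})<\lambda_0(u_{\lambda_0^\star})$. Combining this with the trivial bound $\lambda_1^\star=\inf_{u\neq 0}\lambda_1(u)\leq\lambda_1(u_{\lambda_0^\star})$ and with the identity $\lambda_0(u_{\lambda_0^\star})=\lambda_0^\star$ from the previous step, we obtain the chain $\lambda_1^\star\leq\lambda_1(u_{\lambda_0^\star})<\lambda_0(u_{\lambda_0^\star})=\lambda_0^\star$, which is exactly the assertion. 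This upgrades the weak inequality $\lambda_1^\star\leq\lambda_0^\star$ recorded earlier to a strict one.

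There is essentially no hard step here; the entire content has been front-loaded into Lemma \ref{lambdaonelessthanlambdazero} (strict monotonicity of $\lambda\mapsto\psi'_{\lambda,u}$ together with the observation that the fiber $\psi_{\lambda_0(u),u}$ cannot be nondecreasing on $[0,+\infty)$, since by Lemma \ref{fibersfirstproperties} it is positive near the origin, vanishes at an interior point, and tends to $+\infty$) and into the identification $\lambda_0^\star=\lambda_0(u_{\lambda_0^\star})$. The only point genuinely needing verification is that \eqref{beta3} is strong enough to legitimately quote both of those results, i.e. that it implies the strict form of \eqref{beta1}; the remainder is a one-line concatenation of inequalities.
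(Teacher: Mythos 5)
Your proof is correct and is essentially the paper's argument: the same chain $\lambda_1^\star\leq\lambda_1(u_{\lambda_0^\star})<\lambda_0(u_{\lambda_0^\star})=\lambda_0^\star$, combining Lemma \ref{lambdaonelessthanlambdazero} with the identification $\lambda_0^\star=\lambda_0(u_{\lambda_0^\star})$. Your explicit verification that \eqref{beta3} implies the strict form of \eqref{beta1} (so that the quoted results apply) is a detail the paper leaves implicit, but it matches the computation the paper itself performs in the proof of Lemma \ref{PS}.
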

\begin{proof}
	From Lemma \eqref{lambdaonelessthanlambdazero} we obtain the desired inequality:
\begin{equation*}\lambda_1^{\star}\leq\lambda_1(u_{\lambda_0^{\star}})< \lambda_0(u_{\lambda_0^\star})=\lambda_0^{\star}.
	\end{equation*}
\end{proof}

We are ready to prove our non existence result. 

{\bf Proof of Theorem \ref{main0}}
	Assume  $\lambda< \lambda_1^{\star}$. Then,  for all $u \in W_0^{1,p}\backslash\{0\}$, $\lambda<\lambda_1(u)$. Therefore, $\psi^{'}_{\lambda,u}(t)>\psi^{'}_{\lambda_1(u),u}(t)\geq \psi^{'}_{\lambda_1(u),u}(t_1(u))=0$ for all positive $t$, and therefore the energy functional has no non-zero critical points.
\qed

\bigskip

\bigskip

	{\bf Acknowledgments} G. N. Cunha has been supported by FAPEG, Funda\c c\~ ao de Amparo \`a Pesquisa do Estado de Goi\'as. F. Faraci has been supported by Universit\`{a} degli Studi di Catania, PIACERI 2020-2022,  Linea di intervento 2, Progetto "MAFANE" and by the Gruppo Nazionale per l'Analisi Matematica, la Probabilit\`{a}
	e le loro Applicazioni (GNAMPA) of the Istituto Nazionale di Alta Matematica (INdAM). K. Silva has been supported by CNPq-Grant 308501/2021-7.

\end{document}